\newtheorem{theorem}{Theorem}[section]
\newtheorem{prop}[theorem]{Proposition}
\newtheorem{lemma}[theorem]{Lemma}
\theoremstyle{remark}
\newtheorem{remark}{Remark}[section]
\theoremstyle{definition}
\newtheorem{definition}{Definition}[section]
\numberwithin{equation}{section}
\title{\vspace{-1.0cm} Linear Semi-discrete Polyharmonic Flows of Closed Polygons}
\author{\vspace{-2.0cm}James McCoy\footnote{James.McCoy@newcastle.edu.au, ORCID 0000-0001-6053-5144}, Jahne Meyer\footnote{Jahne.Meyer@newcastle.edu.au, ORCID 0009-0001-2850-8848, School of Information and Physical Sciences, University of Newcastle, Australia.  Parts of this work were completed while the first author was supported by Discovery Project DP180100431 of the Australian Research Council and by Chinese Academy of Sciences President's International Fellowship Initiative grant 2024PVA0042.  This work was completed while the second author was supported by an UNRS Central Scholarship and parts of the work was completed whilst supported by a DAAD Research Grant.  The authors are grateful for this support.  MSC 34A26, 51E12  Keywords: system of linear ordinary differential equations, linear curvature flow, evolving polygon}}
\date{}
\begin{document}
\maketitle
\begin{abstract}

In 2007, Chow and Glickenstein considered a linear semi-discrete analogue of the second-order curve shortening flow for smooth closed curves.  In this article, we consider linear semi-discrete analogues of the polyharmonic curve diffusion flows for curves in $\mathbb{R}^p, p\geq 2$.  Since our flows correspond to first-order systems of linear ordinary differential equations with constant coefficients, solutions can be written down explicitly.  As an application of similar ideas, we consider a linear semi-discrete answer to Yau's question of when one can flow one curve to another by a curvature flow.  In this setting, we are able to flow any closed polygonal curve to any other with the same or differing number of vertices, in the sense of exponential convergence in infinite time to a translate of the target polygon.
\end{abstract}

\section{Introduction}

Chow and Glickenstein considered in \cite{chow2007semidiscrete} a semi-discrete, linear analogue of the curve shortening flow, for both planar curves and curves in higher codimension.  Given an ordered collection of $n$ points in $\mathbb{R}^p$, joined in order to form a piecewise linear, closed immersed `curve', the flow is given by a first-order system of $n$ coupled linear ordinary differential equations.  The constant coefficient matrix of this system is a second-order difference type operator.  Later, in \cite{glickenstein2018asymptotic}, Glickenstein and Liang considered a nonlinear generalisation of this flow.  In this article we wish to generalise Chow and Glickenstein's flow in two other directions.  First, we introduce semi-discrete analogues of the polyharmonic curve diffusion flow, whose smooth counterparts were considered in \cite{parkins2016polyharmonic, wheeler2013curve}.  Second, we introduce the semi-discrete analogue of Yau's problem of flowing one smooth curve to another via curvature flow, where in our setting the target curve is given as an ordered collection of $n$ points in $\mathbb{R}^p$, again joined by straight lines to form a piecewise linear closed immersed curve.  In both cases our systems of coupled ordinary differential equations are linear, with the coefficient matrix corresponding to an $m$th order difference operator.  Linearity permits analysis via finite Fourier series.  We find that the higher the order $m$, the faster the convergence, in the first case under appropriate rescaling, to a regular basis polygon, in the second case to the target curve.  This behaviour with respect to $m$ is similarly observed in the smooth case \cite{MSW23}, but there are usually restrictions on the initial data to be able to deduce long time behaviour of the corresponding flows.  We believe the adjustment of Chow and Glickenstein's original semi-discrete flow to the Yau problem is new, as are the `higher order' cases.

Let us mention some further connections between our work and others.  Like the flows of \cite{chow2007semidiscrete} and ours, Vieira and Garcia \cite{VG18} considered semi-discrete flows, continuous in time, but flowing the points by a system of linear ordinary differential equations with arbitrary coefficient matrix.  In a sense, our flows are therefore contained in their class, although the arbitrary coefficient matrix might not in general have an obvious geometric interpretation.  On the other hand, fully discrete flows were considered by Rademacher and Rademacher \cite{rademacher2017solitons, rr21}, work building on foundations that may be traced back to Darboux \cite{darboux1878probleme}. The fully discrete flows are related to discretisation of parabolic partial differential equations in space and time using finite differences, while the semi-discrete flows are related to the `method of lines' where there is discretisation in all but one dimension (usually time), reducing a given partial differential equation to an ordinary differential equation.  The equations we consider are closely related to discretisations via finite differences or finite elements; for related references for the fourth order case we refer the interested reader to \cite{DKS02, CWW23}.  A different kind of fully discrete flow, homotopic curve shortening, is described in \cite{AN21}.

Just as Chow and Glickenstein included in \cite{glickenstein2018asymptotic} a discussion of the relationship between their semi-discrete flow and the classical curve shortening flow of smooth curves, we have here a similar analogy between our flows and the corresponding higher order flows.  In particular, the fourth order \emph{curve diffusion flow} was introduced by Mullins \cite{M57} to model thermal grooving in metals; we refer to  \cite{wheeler2013curve, edwards2016shrinking} and the references contained therein for more about this flow.  While curve shortening corresponds to a heat-type equation for the evolution of the position vector, with the classical Laplacian replaced by the Laplace--Beltrami operator on the evolving curve (hence the nonlinearity), curve diffusion corresponds to a biharmonic heat equation for the position vector, with a twice iterated Laplace--Beltrami operator.  One may continue iterating the Laplace--Beltrami operator, alternating the sign in each case to maintain parabolicity of the resulting flows.  The triharmonic case, with some relevance in computer imaging, was considered in \cite{MPW17}, while the polyharmonic case was considered in \cite{parkins2016polyharmonic}.  These correspond to different powers $m$ of the difference matrix $M$ in our system of ordinary differential equations for vertices evolving under our linear semi-discrete flows.

We remark that settings in which curvature flows result in linear ordinary or partial differential equations are relatively rare.  Apart from the setting of ordinary differential equations in this article, the main other setting where flow equations are linear is where evolving convex curves and hypersurfaces are expressed via the Gauss map parametrisation and flow speeds have a specific structure.  The first of these, studied by Smoczyk \cite{Sm05}, is the expansion of convex curves and hypersurfaces by the reciprocal of the harmonic mean of the principal curvatures.  Recently, the first author, together with Schrader and Wheeler considered higher order linear parabolic curvature flow of strictly locally convex curves \cite{MSW23}.  Notably, as here, some immersed but not embedded curves were allowed, however, unlike here, there were restrictions on the Fourier coefficients of the initial data.  The first author has also recently considered convex hypersurfaces evolving by similar higher order linear parabolic curvature flows \cite{M24}.  Like here, in these cases exponential convergence is faster with increasing order.  He further considered convex hypersurfaces evolving by second order linear hyperbolic flows in \cite{M25}. We remark that the authors have recently considered hyperbolic analogues of the semi-discrete curvature flows of this article in \cite{MM24}.

The structure of this article is as follows.  In Section \ref{S:background}, we set up the necessary notation and background for us to be able to state and prove the main results, including key properties of circulant matrices and the differential operators on which our flows are based.  In Section \ref{sec:prelim}, we introduce the semi-discrete polyharmonic flows and uncover some fundamental properties of these flows.  In Section \ref{S:planar}, we specialise to evolving planar polygons, finding self-similar solutions and a representation formula for general solutions for the flow that allows us to deduce long-time behaviour.  In Section \ref{sec:higher_codimension}, we consider polygons in general codimension.
Finally, in Section \ref{S:Yau}, we consider a nonhomogeneous version of the flow which addresses the discrete analogue of Yau's question of when one can flow one planar curve to another by a curvature flow.

\section{The semi-discrete polyharmonic flow of polygons} \label{S:background}

\begin{definition}\label{def:polygon}

For a fixed integer $p\geq 2,$ we define a closed $n$-sided polygon, or $n$-gon, $X$, as an ordered set of points $X=(X_0, X_1, \ldots, X_{n-1})^T,$ where, for each $j=0,1,\ldots, n-1,$ the vertex $X_j$ is a point in $\mathbb{R}^p$ and $X_n = X_0$.

\begin{remark}
    \begin{enumerate}
        \item It is convenient to think of the index of points of the closed $n$-gon as modulo $n$.
        \item While it is convenient to work with the evolving polygon as defined, it is also useful to think of the polygonal image as each vertex pair $X_i$ and $X_{i+1}$ joined by a line segment.
        \item As for smooth curves, we should distinguish between a polygon as defined, and its image.  Here a given polygonal image might be described by several different `polygons', where the first vertex and direction of vertex numbering may be chosen differently. 
    \end{enumerate}
\end{remark}
    
\end{definition}

 For $j = 0,1,,\ldots, n-1,$ the points $X_j$ are the \textit{vertices} of the polygon,
 and $\overline{X_jX_{j+1}}$ are the line segments that joins each vertex $X_j$ to $X_{j+1}$ for each $j,$ which are considered the \textit{edges}. 
Generally, $X$ is a $n\times p$ matrix with real entries. When $p=2$, the polygon lies in the plane $\mathbb{R}^2.$ In this case, we can alternatively consider each vertex, $X_j = (x_i\  y_i) \in \mathbb{R}^2,$ to be a point in $\mathbb{C}$ such that $ X_j = x_j + iy_j.$ Therefore, $X$ as an $n \times 2$ matrix representing a polygon in $\mathbb{R}^2$ can also be thought of as a vector in $\mathbb{C}^n$.

We can consider a \textit{normal} to each vertex $X_j$ to be given by
\begin{equation} \label{E:Nj}
N_j = (X_{j+1} - X_j) + (X_{j-1} - X_j), 
\end{equation}
such that the corresponding system of equations can be expressed in matrix form as 
\begin{equation}
    N = MX
\end{equation}
where $M$ is an $n \times n$ matrix and is given by
\begin{equation}\label{matrix:M}
M = 
\begin{bmatrix}
-2 & 1 & 0 & \cdots & 0 & 1\\
1 & -2 & 1 & 0 & \cdots & 0 \\
0 & 1 & -2 & 1 & 0 & \vdots\\
\vdots & 0 & \ddots & \ddots & \ddots & 0\\
0 & \ddots & 0 & 1 & -2 & 1\\
1 & 0 & \cdots & 0 & 1 & -2\\
\end{bmatrix}.\\
\end{equation}

\begin{definition}
Let $X(t)$ be a family of polygons as given in Definition \ref{def:polygon}. Given $m\in \mathbb{N},$ polygons $X(t)$ satisfying
\begin{equation}\label{eqn:polyflow}
\frac{dX}{dt} = (-1)^{m+1}M^mX \tag{$\mathrm{SPF}_m$}
\end{equation}
evolve by the $2m$th order \textit{semi-discrete polyharmonic flow}, where $M$ is the matrix given in \eqref{matrix:M}. 
 \end{definition}

\begin{remark}
\begin{enumerate}
  \item When $m = 1,$ \eqref{eqn:polyflow} is the evolution system given by Chow and Glickenstein in \cite{chow2007semidiscrete}. The main results presented there demonstrate that the solutions to the system for $m=1$ shrink to a point as $t$ increases, and asymptotically converge to an affine transformation of a regular polygon. Furthermore, a polygon in higher codimension $\mathbb{R}^p, p\geq 3$ will shrink to a point and asymptotically converge  to a linear transformation of the image of a planar regular convex polygon.  We will prove similar results for the semi-discrete polyharmonic flow, given in Theorems \ref{thm:plane} and \ref{thm:highcodim}. 
  
  \item The elements of $M$ in \eqref{matrix:M} arise as the coefficients in finite difference approximations for the `second derivatives' associated $X$, as in \eqref{E:Nj}.  Powers of $M$ then yield higher finite differences, consistent with those obtained for example by Newton's divided difference approach.  

  \item If we were to consider $m=0,$ the flow \eqref{eqn:polyflow} becomes $\frac{dX}{dt} = - X$ with solution $X(t) = e^{-t}X^0$ for \emph{any} initial $X\left( 0\right) =X^0$.  Therefore the $m=0$ flow shrinks any initial polygon self-similarly and exponentially asymptotically to the origin, with scaling factor $e^{-t}$.  For the remainder of this article we consider $m\in \mathbb{N},$ the strictly positive integers.  We find, for example, only certain polygons can shrink self-similarly under \eqref{eqn:polyflow} for general $m\in \mathbb{N}$, but for other polygons more general behaviour is still completely described by an explicit representation formula for the solution.

  \item As \eqref{eqn:polyflow} is a homogeneous system of linear differential equations with a constant coefficient matrix, existence of a unique solution in a neighbourhood of any initial data is completely standard as one can write down an explicit formula for the solution.  Moreover the formula for the solution reveals that the solution exists for all time, given any initial polygon and indeed \emph{ancient solutions} also make sense for any initial polygon.  Ancient solutions are those that can be extended back $t\rightarrow -\infty$.  These are properties that usually do not hold in full generality for other geometric flows and, when they do, generally require much work to prove.

  \item Let us finally note that uniqueness is understood once the initial polygon has been specified as per Definition \ref{def:polygon}.  
  \end{enumerate}

  \end{remark}

\section{Preliminary properties of the semi-discrete polyharmonic flow}\label{sec:prelim}

In this section we detail the setup of the semi-discrete polyharmonic flow and its properties.

Let $X$ be a $n$-sided polygon as defined in Definition \ref{def:polygon}. We define an ordered set $S,$ of vectors in $\mathbb{R}^p,$ indexed modulo $n$, as 
$$ S = \left\{ V_j \in \mathbb{R}^p\colon j\in \mathbb{Z}/nZ \right\}.$$
We can see that for a polygon $X,$ the set of its vertices are in $S.$

We define a difference operator on these ordered vectors $D\colon S \to S,$ which is the difference of consecutive indexed pairs of vertices of the polygon, that is $D(V_j) = V_{j+1} - V_j$ for all $j = 0, 1, \ldots n-1.$ We denote multiple compositions of $D$ with itself by $D^m$ where $m$ is the number of compositions of $D$. 
It can be shown by induction that for a set of vectors in $S$, namely the vertices on the polygon $X,$ given by $ \{X_0, X_1,\ldots X_{n-1}\},$ we have 
\begin{equation}\label{eqn:Dcomposed}
    D^m(X_j) = \sum^{m}_{k=0}(-1)^{m+k}\binom{m}{k}X_{j+k}
\end{equation}
for all $j = 0,1,\ldots, n-1.$  Also note
$$\frac{dD(X_j)}{dt} = \frac{dX_{j+1}}{dt} - \frac{dX_j}{dt} = D\left(\frac{dX_j}{dt}\right).$$

\begin{definition}\label{def:energyfunctional}
    For any fixed $m\in \mathbb{N}$, define the energy functional $F_m$ on the polygon $X$ by 
\begin{equation}\label{eqn:pfunctional}
    F_m(X) \colon=  \frac{1}{2}\sum^{n-1}_{j = 0} |D^m(X_j)|^2.
\end{equation}
\end{definition}

\begin{lemma}\label{lem:dXj/dt}
 For any fixed $m\in \mathbb{N}$ and polygon $X = (X_0, X_1,\ldots, X_{n-1})^T$, the negative gradient flow for $F_m$ is obtained by taking for each $j=0, \ldots, n-1$
\begin{equation}\label{eqn:dXjdt}
\frac{dX_j}{dt} =  (-1)^{m+1}D^{2m}(X_{j-m}) = (-1)^{m+1}\sum^{2m}_{k=0}(-1)^{k}\binom{2m}{k}X_{j-m + k}.
\end{equation}
\end{lemma}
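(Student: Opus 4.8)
The plan is to compute the $L^2$-gradient of $F_m$ directly, by a discrete summation-by-parts argument; the fact that all indices are read modulo $n$ means there are no boundary terms to worry about, so the computation is exact and purely algebraic.

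First I would identify the adjoint of the forward difference operator $D$ with respect to the inner product $\langle V,W\rangle := \sum_{j=0}^{n-1} V_j\cdot W_j$ on the space $S$. Reindexing the sum $\sum_j (V_{j+1}-V_j)\cdot W_j$, which is legitimate since $j$ ranges over $\mathbb Z/n\mathbb Z$, rewrites it as $\sum_j V_j\cdot\bigl(-(W_j-W_{j-1})\bigr)$; hence $D$ has adjoint $D^\ast(W_j) = -(W_j-W_{j-1}) = -D(W_{j-1})$. Writing $E$ for the shift $(EW)_j = W_{j+1}$, this reads $D^\ast = -E^{-1}D$, and since $E$ and $D$ commute, iterating gives $(D^m)^\ast = (D^\ast)^m = (-1)^m E^{-m}D^m$, so that $(D^m)^\ast D^m = (-1)^m E^{-m} D^{2m}$.

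Next I would take the first variation of $F_m$. For a variation $X+sY$ one has, using linearity of $D^m$ (and \eqref{eqn:Dcomposed} if an explicit check is wanted), $\frac{d}{ds}\big|_{s=0} F_m(X+sY) = \sum_j D^m(X_j)\cdot D^m(Y_j) = \langle D^m X, D^m Y\rangle$. Moving the operators onto the first factor via the adjoint identity gives $\langle (D^m)^\ast D^m X, Y\rangle = \sum_j (-1)^m D^{2m}(X_{j-m})\cdot Y_j$. Therefore the $L^2$-gradient of $F_m$ at the $j$th vertex is $(-1)^m D^{2m}(X_{j-m})$, and the negative gradient flow is $\frac{dX_j}{dt} = (-1)^{m+1} D^{2m}(X_{j-m})$, which is the first equality in \eqref{eqn:dXjdt}; the second equality follows by replacing $m$ by $2m$ and $j$ by $j-m$ in \eqref{eqn:Dcomposed} and using $(-1)^{2m+k}=(-1)^k$. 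I would also remark at this point that since $(MX)_j = X_{j+1}-2X_j+X_{j-1} = D^2(X_{j-1})$, i.e.\ $M = E^{-1}D^2$, one has $(M^m X)_j = D^{2m}(X_{j-m})$, so this simultaneously shows that \eqref{eqn:polyflow} is exactly the negative gradient flow of $F_m$.

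There is no genuine analytic obstacle here: the whole argument reduces to identifying $D^\ast = -E^{-1}D$ and then keeping careful track of cyclic indices. The one place to be attentive is the shift $E^{-m}$ produced by iterating the adjoint — it is precisely this shift that turns $D^{2m}(X_j)$ into $D^{2m}(X_{j-m})$ in the flow, and the same shift reconciles the two forms of the right-hand side of \eqref{eqn:dXjdt} with the matrix expression $(-1)^{m+1}M^mX$.
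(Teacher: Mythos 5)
Your argument is correct, and at its core it is the same cyclic summation-by-parts computation as the paper's, but you organise it differently: the paper expands $D^m(Y_j)$ explicitly via \eqref{eqn:Dcomposed}, reindexes the double sum so that everything pairs with $Y_j$, and then recognises the resulting coefficient combination as $(-1)^{m}D^{2m}(X_{j-m})$, whereas you compute the adjoint of the single operator once, $D^\ast=-E^{-1}D$, and iterate it using commutativity of $D$ with the shift $E$ to get $(D^m)^\ast D^m=(-1)^mE^{-m}D^{2m}$, so the sign $(-1)^m$ and the index shift by $m$ appear without any binomial bookkeeping. What your packaging buys in addition is the operator identity $M=E^{-1}D^2$, hence $(M^mX)_j=D^{2m}(X_{j-m})$, which identifies the negative gradient flow with \eqref{eqn:polyflow} immediately; the paper reaches that identification only later, through the explicit circulant-entry description of $M^m$ (the function $u_m$ in Lemma \ref{lem:Mpexpression} and the row computation in Lemma \ref{lem:Row_multiple_X_j}), machinery that is more laborious but also records the entries of $M^m$ explicitly. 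The one point to keep careful, as you note, is the shift $E^{-m}$ produced by iterating the adjoint — dropping it would give $D^{2m}(X_j)$ instead of $D^{2m}(X_{j-m})$ — and your treatment handles this correctly, including the final passage to the binomial form of \eqref{eqn:dXjdt} via \eqref{eqn:Dcomposed} with $m$ replaced by $2m$ and $(-1)^{2m+k}=(-1)^k$.
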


\begin{proof}
Letting $Y_j = \frac{dX_j}{dt}$, we can compute the variation of the functional for general $m\in \mathbb{N}$ as
\begin{multline*}
    \frac{d}{dt}F_m(X)  
     = \sum^{n-1}_{j =0}\langle D^m(X_j), D^m(Y_j)\rangle
     = \sum^{n-1}_{j =0}\sum^m_{k=0} (-1)^{m+k}\binom{m}{k}\langle D^m(X_j), Y_{j+k}\rangle \\
     = \sum^{n-1}_{j =0}\left\langle \sum^m_{k=0} (-1)^{m+(m-k)}\binom{m}{k}D^m(X_{j-(m-k)}), Y_{j}\right\rangle 
     = -\sum^{n-1}_{j=0}\left\langle(-1)^{m+1}D^{2m}(X_{j-m}), Y_j\right\rangle,
\end{multline*}
from which we conclude the result.
\end{proof}
For $m=1,$ we have 
\begin{equation*}
    \frac{d}{dt}F_1(X) = -\sum_{j=0}^{n-1}\left\langle D^2(X_{j-1}), Y_j\right\rangle = -\sum_{j=0}^{n-1}\left\langle D(X_{j}-X_{j-1}), Y_j\right\rangle = -\sum_{j=0}^{n-1}\left\langle X_{j+1} - 2X_j + X_{j-1}, Y_j\right\rangle,
\end{equation*}
and so the negative gradient flow of $F_1$ is given by
\begin{equation}\label{eqn:pequals1case}
    \frac{dX_j}{dt} = X_{j+1} - 2X_j + X_{j-1},
\end{equation}
for all vertices $X_j$ of polygon $X.$  This is the evolution equation studied in \cite{chow2007semidiscrete}.
Considering the set of equations \eqref{eqn:pequals1case} for all vertices of the polygon, we can write the following equivalent system
\begin{equation}
\frac{dX}{dt} = MX.
\end{equation}
This is the case $m=1$ of \eqref{eqn:polyflow}.

To find the semi-discrete polyharmonic \eqref{eqn:polyflow} for general $m\in \mathbb{N},$ we wish to find an expression for $M^m.$

The matrix $M$ given in \eqref{matrix:M} is a \textit{circulant} matrix. 
A general circulant matrix $B$ is of the form
\begin{equation*}
B = 
\begin{bmatrix}
b_0 & b_1 & b_2 & \cdots & b_{n-2} & b_{n-1}\\
b_{n-1} & b_0 & b_1  & \cdots& b_{n-3} & b_{n-2} \\
b_{n-2} & b_{n-1} & b_0  & \cdots & b_{n-4} & b_{n-3}\\
\vdots & \vdots & \vdots & \cdots & \vdots & \vdots\\
b_2 & b_3 & b_4 &\cdots & b_0 & b_1\\
b_1 & b_2 & b_3 & \cdots  & b_{n-1} & b_0\\
\end{bmatrix}
\end{equation*}
where each row is produced by shifting each of the elements of the previous row to the right. A matrix of this form can also be denoted as $B = \text{circ}(b_0, b_1,\ldots, b_{n-1}).$
Many properties of circulant matrices are detailed in \cite{davismatrices} with further details about their eigenvalue and eigenvector properties in \cite{tee2007eigenvectors}. The product of circulant matrices is also circulant, therefore $M^m$ is circulant for any $m \in \mathbb{N}.$

We will establish an expression for $M^m$ where the terms within the $M^m$ are built from alternating signs of combinatorial terms $\binom{2m}{k}$ for $k=0,1,\ldots, 2m$ as well as zero terms in some cases. As higher powers of $m$ are considered, for $2m\geq n,$ matrix $M^m$ will have some or all entries that are summations of multiple combinatorial terms, and no zero terms. For example, when $n=6$ we have $M = \text{circ}\left(-\binom{2}{1}, \binom{2}{2},0,0,0,\binom{2}{0}\right),$ $M^2 = \text{circ}\left(\binom{4}{2},-\binom{4}{3}, \binom{4}{4}, 0, \binom{4}{0}, -\binom{4}{1}\right),$ $M^3 = \text{circ}\left( -\binom{6}{3}, \binom{6}{4}, -\binom{6}{5}, \binom{6}{6}+\binom{6}{0}, -\binom{6}{1}, \binom{6}{2} \right),$ and so forth where $M^m$ for $m\geq 3$ contains sums of multiple combinatorial terms. To cater for this occurrence we set up a function corresponding to $m$ and $n,$ that takes on values involving the combinatorial terms as well as zero terms. The matrix $M^m$ will be built from a summation expression of these function values. 

\begin{definition}
Given $m\in\mathbb{N},$ $n\in \mathbb{N}, n\geq 3$ and a chosen $r\in \mathbb{N}$ such that $rn - (2m+1) \geq 2,$ we define a function $u_m\colon\mathbb{Z}/nr\mathbb{Z} \to \mathbb{Z}$ by
\begin{equation}\label{eqn:u_function}
u_m(k) \colon= 
\left\{
    \begin{array}{ll}
        (-1)^{m+k}\binom{2m}{m+k}, & \text{for } k = 0,1,\ldots, m\\
        0, & \text{for } k = m+1,m+2,\ldots, rn - m -1\\
        (-1)^{m+k-rn}\binom{2m}{m+k -rn}, & \text{for } k = rn-m,\ldots, rn-1\\ 
    \end{array}
\right. 
\end{equation}
\end{definition}

This function takes on expressions involving combinatorial terms $\binom{2m}{k}$ for $k =0,1\ldots, 2m,$ as well as $rn - (2m+1)$ zero terms. Developing the expression for $M^m,$ given in Lemma \ref{lem:Mpexpression} below and involving sums of $u_m$ function values, requires us to consider $u_{m+1}$ and its relationship to $u_m.$ Therefore $r$ is chosen such that $u_m$ and $u_{m+1}$ have the same domain. Taking different values of $r$ only adjusts the number of zero function values. Provided $rn-(2m+1) \geq 2,$ this ensures $nr \geq 2(m+1)+1$ which is required to encompass all combinatorial terms $\binom{2(m+1)}{k},$ $k=0,1,\ldots, 2(m+1),$ for $u_{m+1}.$ The smallest value $r$ can take on is $\left\lceil \frac{2m+1}{n}\right\rceil$ if $\left\lceil \frac{2m+1}{n}\right\rceil = \left\lceil \frac{2m+3}{n}\right\rceil,$ otherwise $\left\lceil \frac{2m+1}{n}\right\rceil +1.$ 

Given the domain of $u_m,$ we have $u_{m}(k) = u_{m}(nr+k)$ for any $k\in \mathbb{Z}.$ We also note $u_m(k) = u_m(nr-k)$ for any $k\in \mathbb{Z}/nr\mathbb{Z},$ given the relationship between combinatorial terms.

\begin{lemma}\label{lem:ufunction_m+1}

    Take any $m\in \mathbb{N},$ $n\in \mathbb{N},$ where $n\geq 3,$ and choose $r\in \mathbb{N}$ such that $rn - (2m+1) \geq 2.$
    For $u_m$ as defined in \eqref{eqn:u_function}, and for function $u_{m+1}$ given in the same way but with $m+1$ in place of $m,$ we have
    \begin{equation}\label{eqn:ufunction_relationship}
        u_{m+1}(k) = u_{m}(k+1) -2u_{m}(k) + u_{m}(k-1)
    \end{equation}
    for $k = 0, 1, \ldots, rn-1.$ 
     
\end{lemma}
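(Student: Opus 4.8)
The plan is to recognise $u_m$ as the restriction to one period of the $rn$-periodisation of a single ``unwrapped'' signed binomial sequence, and then to derive \eqref{eqn:ufunction_relationship} from the twice-iterated Pascal identity. Adopt throughout the convention $\binom{2m}{j}=0$ for $j<0$ or $j>2m$, and define $v_m\colon\mathbb{Z}\to\mathbb{Z}$ by $v_m(k)=(-1)^{m+k}\binom{2m}{m+k}$, so that $v_m$ is supported on $\{-m,\dots,m\}$. First I would check that for every $k\in\{0,1,\dots,rn-1\}$,
\[
u_m(k)=v_m(k)+v_m(k-rn).
\]
Indeed, the three cases in \eqref{eqn:u_function} are exactly the values of $v_m(\cdot)$ on $\{0,\dots,m\}$, of $0$ on the block $\{m+1,\dots,rn-m-1\}$, and of $v_m(\cdot-rn)$ on $\{rn-m,\dots,rn-1\}$; the standing hypothesis $rn-(2m+1)\ge 2$ is precisely what makes these three blocks disjoint with union $\{0,\dots,rn-1\}$, so no two $rn$-shifted copies of $v_m$ overlap on this fundamental domain. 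The same computation, with $m+1$ in place of $m$, gives $u_{m+1}(k)=v_{m+1}(k)+v_{m+1}(k-rn)$, using $rn\ge 2(m+1)+1$.

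Next I would establish the un-periodised recursion
\[
v_{m+1}(k)=v_m(k+1)-2v_m(k)+v_m(k-1)\qquad\text{for all }k\in\mathbb{Z}.
\]
Factoring the common sign $(-1)^{m+k+1}$ out of the right-hand side (using $(-1)^{m+k-1}=(-1)^{m+k+1}$ and $-2(-1)^{m+k}=2(-1)^{m+k+1}$) turns this into
\[
\binom{2m+2}{m+k+1}=\binom{2m}{m+k+1}+2\binom{2m}{m+k}+\binom{2m}{m+k-1},
\]
which is Pascal's rule applied twice, $\binom{2m+2}{j}=\binom{2m}{j}+2\binom{2m}{j-1}+\binom{2m}{j-2}$, with $j=m+k+1$; under the zero convention this is valid for every integer $j$, hence for every $k$. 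Then I would periodise: applying this display at $k$ and at $k-rn$ and adding, then invoking the first step on both sides, yields
\[
u_{m+1}(k)=u_m(k+1)-2u_m(k)+u_m(k-1)
\]
for all $k\in\{0,\dots,rn-1\}$, where the arguments $k\pm1$ on the right are read modulo $rn$ (only the $\ell=0$ and $\ell=1$ shifts of $v_m$ meet $\{0,\dots,rn-1\}$, since $rn>2(m+1)$, so nothing else contributes at the endpoints $k=0$ and $k=rn-1$). This is \eqref{eqn:ufunction_relationship}.

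I expect the only delicate point to be the bookkeeping in the first step: verifying that, under $rn-(2m+1)\ge 2$, the wrapped supports of $v_m$ and $v_{m+1}$ stay confined to the two ``end'' blocks and never collide, so that $u_m$ and $u_{m+1}$ really are single-period restrictions of their periodisations. The combinatorics itself is just Pascal's rule. A more pedestrian alternative, which I would fall back on if one prefers to avoid periodisation, is to split $k\in\{0,\dots,rn-1\}$ into the few subranges determined by where $k-1,k,k+1$ sit relative to the three blocks of \eqref{eqn:u_function} (and the analogous blocks of $u_{m+1}$), and in each subrange verify \eqref{eqn:ufunction_relationship} directly from \eqref{eqn:u_function}, Pascal's rule, and the boundary values $\binom{2m}{-1}=\binom{2m}{2m+1}=0$; this works but is noticeably less transparent.
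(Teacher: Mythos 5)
Your proposal is correct. The combinatorial engine is the same as the paper's --- the twice-iterated Pascal identity $\binom{2m+2}{j}=\binom{2m}{j}+2\binom{2m}{j-1}+\binom{2m}{j-2}$, which the paper records at the start of its proof in the equivalent signed form --- but the way you handle the boundary and wrap-around effects is genuinely different. The paper verifies \eqref{eqn:ufunction_relationship} by a case analysis on $k$: the generic range $0\le k\le m-1$, the two edge cases $k=m$ and $k=m+1$ where some binomial terms disappear, the middle zero block, and then the mirrored range $rn-m-1\le k\le rn-1$ obtained from the symmetry $u_m(k)=u_m(nr-k)$. You instead unwrap $u_m$ as $u_m(k)=v_m(k)+v_m(k-rn)$ with $v_m(k)=(-1)^{m+k}\binom{2m}{m+k}$ and the convention that out-of-range binomials vanish, prove the recursion $v_{m+1}(k)=v_m(k+1)-2v_m(k)+v_m(k-1)$ uniformly on all of $\mathbb{Z}$ (the zero convention absorbs exactly the paper's edge cases $k=m,m+1$), and then periodise, checking via $rn\ge 2m+3$ that the two shifted copies of $v_m$, resp.\ $v_{m+1}$, have disjoint supports on the fundamental domain and that no other shift contributes at $k=0$ and $k=rn-1$ where the arguments $k\mp1$ are read modulo $rn$. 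The support bookkeeping you flag is indeed the only delicate point, and your stated inequalities ($rn-(2m+1)\ge 2$, i.e.\ $rn\ge 2(m+1)+1$) are exactly what is needed, so the argument closes. What the paper's route buys is complete explicitness (every case is a two-line binomial computation, with the symmetry of $u_m$ doing the work of your periodisation); what your route buys is a single uniform identity with no case split and a transparent explanation of why the hypothesis on $r$ is the right one.
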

\begin{proof}

We note the following combinatorial expression for $k=1,2,\ldots, 2m-1$:
\begin{multline*}
    (-1)^{k+1}\binom{2m}{k+1} -2(-1)^k\binom{2m}{k} + (-1)^{k-1}\binom{2m}{k-1}\\  = (-1)^{k+1}\left(\binom{2m}{k+1} + 2\binom{2m}{k} +\binom{2m}{k-1}\right)
     = (-1)^{k+1}\binom{2m+2}{k+1}.
\end{multline*}
To establish the relationship between $u_m$ and $u_{m+1}$, we consider different cases of $k.$ 
For $k = 0,1,\ldots, m-1$, we have
\begin{multline*}
u_{m}(k+1) -2u_{m}(k) + u_{m}(k-1) =  (-1)^{m+k+1}\binom{2m}{m+k+1} -2(-1)^{m+k}\binom{2m}{m+k} + (-1)^{m+k-1}\binom{2m}{m+ k-1}\\
 = (-1)^{m+k+1}\binom{2m+2}{m+k+1} =  u_{m+1}(k).
\end{multline*}
For $k=m,$
\begin{multline*}
    u_{m}(m+1) -2u_{m}(m) + u_{m}(m-1)  = -2(-1)^{2m}\binom{2m}{2m} +(-1)^{2m-1}\binom{2m}{2m-1}\\
    = (-1)^{2m+1}\left(\binom{2m+1}{2m+1} + \binom{2m+1}{2m}\right)
    = (-1)^{2m+1}\binom{2m+2}{2m+1} = u_{m+1}(m).
\end{multline*}
For $k=m+1,$
\begin{align*}
   u_{m}(m+2) -2u_{m}(m+1) +  u_{m}(m) = (-1)^{2m}\binom{2m}{2m} = (-1)^{2m+2}\binom{2m+2}{2m+2} = u_{m+1}(m+1).
\end{align*}

Given $u_{m}(k) = u_{m}(nr-k)$ and $u_{m+1}(k) = u_{m+1}(nr-k),$ from the cases above the expression \eqref{eqn:ufunction_relationship} also holds for $k = rn-m-1, rn-m, rn-m+1,\ldots, rn-1.$

For $k = m+2, m+3, \ldots, rn-m-2,$ the expression \eqref{eqn:ufunction_relationship} holds as all $u_{m}(k) = 0$ and $u_{m+1}(k)=0.$
\end{proof}

\begin{lemma}\label{lem:Mpexpression}
    For the $n \times n$ matrix $M$ given by \eqref{matrix:M} and any $m\in \mathbb{N},$ the matrix $M^m$ is a $n\times n$ symmetric circulant matrix given by
    \begin{equation} \label{eqn:Mpexpression}
M^m = \textnormal{circ}\left(b_0,b_1\ldots, b_{n-1}\right),
   \end{equation}
where 
\begin{equation}
    b_k = \sum_{j = 0}^{r-1}u_{m}(jn + k), \mbox{ } k = 0,1,\ldots, n-1,
\end{equation}
and  $u_m$ is as given in \eqref{eqn:u_function} for $r \geq \lceil \frac{2m+3}{n}\rceil.$

\end{lemma}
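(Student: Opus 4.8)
The plan is to prove the formula for $M^m$ by induction on $m$, using Lemma \ref{lem:ufunction_m+1} as the key inductive ingredient. First I would check the base case $m=1$: here the $u_1$ function takes values $u_1(0)=-\binom{2}{1}=-2$, $u_1(1)=\binom{2}{2}=1$, zeros for $k=2,\ldots,rn-2$, and $u_1(rn-1)=\binom{2}{0}=1$, so that $b_0 = \sum_{j=0}^{r-1}u_1(jn) = u_1(0) = -2$ (the other terms vanish since $jn$ for $j\geq 1$ lands in the zero-block, as $n\geq 3$), $b_1 = u_1(1) = 1$, $b_{n-1} = u_1(n-1)+\sum_{j=1}^{r-1}u_1(jn+n-1)=u_1(rn-1)=1$, and $b_k=0$ otherwise. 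This recovers exactly $M = \mathrm{circ}(-2,1,0,\ldots,0,1)$ from \eqref{matrix:M}.

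For the inductive step, I would assume $M^m = \mathrm{circ}(b_0,\ldots,b_{n-1})$ with $b_k = \sum_{j=0}^{r-1}u_m(jn+k)$, and compute $M^{m+1} = M \cdot M^m$. Since the product of circulant matrices is circulant with the coefficient sequence given by cyclic convolution, the coefficients $c_k$ of $M^{m+1}$ satisfy $c_k = \sum_{\ell} (M)_\ell \, b_{k-\ell \bmod n}$, and because $M$ has only three nonzero coefficients ($-2$ at position $0$, $1$ at positions $1$ and $n-1\equiv -1$), this gives $c_k = b_{k+1} - 2b_k + b_{k-1}$ with indices mod $n$. Substituting the inductive hypothesis, $c_k = \sum_{j=0}^{r-1}\bigl(u_m(jn+k+1) - 2u_m(jn+k) + u_m(jn+k-1)\bigr)$. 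By Lemma \ref{lem:ufunction_m+1}, the summand is $u_{m+1}(jn+k)$ — here I need to be slightly careful that the argument $jn+k\pm 1$ is interpreted mod $nr$ so that the lemma applies across block boundaries, which is fine since $u_m$ was defined on $\mathbb{Z}/nr\mathbb{Z}$ and satisfies $u_m(k)=u_m(nr+k)$. Hence $c_k = \sum_{j=0}^{r-1}u_{m+1}(jn+k)$, which is the claimed formula for $M^{m+1}$, provided $r$ is still a valid choice for $m+1$, i.e. $rn-(2(m+1)+1)\geq 2$; this is why the hypothesis $r\geq\lceil\frac{2m+3}{n}\rceil$ is imposed — it guarantees $rn\geq 2m+3 = 2(m+1)+1$ and in fact $rn-(2m+3)\geq$ the required margin, so the same $r$ works at level $m+1$. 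I would also note that symmetry of $M^m$ follows either from $M$ being symmetric (so $M^m$ is), or directly from $u_m(k)=u_m(nr-k)$ giving $b_k = b_{n-k}$.

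The main obstacle — really the only subtle point — is the bookkeeping at the block boundaries of the $u_m$ function when applying the convolution identity $c_k = b_{k+1}-2b_k+b_{k-1}$. One must verify that shifting the index inside $\sum_{j=0}^{r-1}u_m(jn+k)$ by $\pm 1$ and then regrouping genuinely reproduces $\sum_{j=0}^{r-1}u_{m+1}(jn+k)$ rather than a shifted or wrapped-around version; the cleanest way is to observe that $\sum_{j=0}^{r-1}u_m(jn+k)$ as a function of $k$ is $n$-periodic (since summing over a full set of $r$ blocks of length $n$ inside $\mathbb{Z}/nr\mathbb{Z}$ is translation-invariant under $k\mapsto k+n$), so the finite-difference operator $k\mapsto (\,\cdot\,)(k+1)-2(\,\cdot\,)(k)+(\,\cdot\,)(k-1)$ commutes with the sum over $j$, and then Lemma \ref{lem:ufunction_m+1} applies termwise on $\mathbb{Z}/nr\mathbb{Z}$. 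I would present this periodicity observation explicitly, as it is what makes the otherwise routine convolution computation rigorous, and then the rest is immediate.
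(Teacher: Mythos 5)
Your proposal is correct and follows essentially the same route as the paper: induction on $m$ with the identical base case, where the inductive step multiplies by $M$ and applies Lemma \ref{lem:ufunction_m+1} termwise to the second difference of the coefficients $b_k=\sum_{j=0}^{r-1}u_m(jn+k)$. The only differences are cosmetic bookkeeping --- you use the cyclic-convolution identity $c_k=b_{k+1}-2b_k+b_{k-1}$ together with the $n$-periodicity of $k\mapsto\sum_j u_m(jn+k)$, whereas the paper dots rows of $M^m$ with the first row of $M$ and reindexes via $u_{m+1}(k)=u_{m+1}(nr-k)$; note only that the same $r$ works at level $m+1$ because $rn\geq 2m+3=2(m+1)+1$ already suffices for $u_{m+1}$ to be well defined (you do not actually get, nor need, the stronger margin $rn-(2m+3)\geq 2$).
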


\begin{proof}
     Since the matrix $M^m$ is circulant by properties of  matrix multiplication, we just need to verify the elements of its first row. We denote rows of $M^m$ as $R_{m,k}$ for $k = 0,1, \ldots, n-1$ with the first row given as $R_{m,0}.$ We follow an induction argument. 
     For the base case when $m=1$, we have $R_{1,0} = (-2,1,0, \ldots, 0,1).$ Since $n\geq 3,$ then $\lceil \frac{2m+1}{n}\rceil = \lceil \frac{3}{n}\rceil = 1.$ We take $r=2$ and so $u_1(0)=-2, u_1(1) = u_1(2n-1)=1$ and $u_1(k) = 0$ for $k=2,3,\ldots, 2n-2.$
     Therefore, $R_{1,0} = (u_{1}(0)+ u_{1}(n), u_{1}(1) + u_{1}(n+1) ,\ldots, u_{1}(n-1)+ u_{1}(2n-1))$ as required. 

    We assume the result is true for $M^m = \text{circ}(b_0,\ldots, b_{n-1})$ such that $b_k = \sum^{r-1}_{j=0}u_m(jn+k)$ for $r \geq \lceil\frac{2m+3}{n}\rceil.$
    
    We calculate $M^{m+1}$ using $u_{m+1}$ and $u_m,$ noting that $u_{m+1}$ and $u_m$ have the same domain and $rn \geq 2(m+1)+1) = 2m+3.$ Denote $M^{m+1}= M^mM$ as $ M^{m+1} = \text{circ}(b'_0,\ldots, b'_{n-1}).$ Given $M$ is symmetric, for each $k=0,\ldots, n-1,$ $b'_k = R_{m,k} \cdot R_{1,0}.$ Therefore, noting $R_{m,k} = (b_{n-k}, b_{n-k+1}, \ldots, b_{n-k-1})$, where the index of the terms of each row of $M^m$ is modulo $n$, and using Lemma \ref{lem:ufunction_m+1}, we have
    \begin{multline*}
        b'_k  = -2b_{n-k} + b_{n-k+1} + b_{n-k-1}\\
          = \sum^{r-1}_{j=0}-2u_{m}(nj+(n-k)) + u_{m}(nj+(n-k+1)) + u_{m}(nj+(n-k-1))
          = \sum^{r-1}_{j=0} u_{m+1}(nj + (n-k))
         \end{multline*}
         and hence
       $$ b'_k = \sum^{r-1}_{j=0} u_{m+1}(nr-(nj+(n-k)))
         = \sum^{r-1}_{j=0} u_{m+1}(n(r-1-j) + k)
     = \sum^{r-1}_{j=0} u_{m+1}(nj +k),$$
 proving the expression for $m+1.$ 

For $k = 0,1,\ldots, n-1,$ we have $u_{m}(k) = u_{m}(nr-k)$ giving
 \begin{equation*}
     b_{n-k} = \sum^{r-1}_{j=0}u_{m}(nj+ n-k) = \sum^{r-1}_{j=0}u_{m}(n(r-1-j)+k) = \sum^{r-1}_{j=0}u_{m}(nj+k)= b_{k},
 \end{equation*}
and so $M^m$ is symmetric for all $m.$
\end{proof}

\begin{lemma}\label{lem:Row_multiple_X_j}
    For $j = 0, 1, \ldots, n-1$, denote by $R_{m,j}$ the $j$th row of the matrix $M^m$ given in Lemma \ref{lem:Mpexpression}.  Then, for a polygon $X = (X_0,\ldots , X_{n-1})^T,$ with $\frac{dX_j}{dt}$ given in \eqref{eqn:dXjdt} for all $j=0,1,\ldots, n-1$, we have
     \begin{equation}
        \frac{dX_j}{dt} = (-1)^{m+1}R_{m,j}X. 
     \end{equation}     
\end{lemma}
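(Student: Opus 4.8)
The statement is equivalent to showing $D^{2m}(X_{j-m}) = R_{m,j}X$, since the factor $(-1)^{m+1}$ appears on both sides of \eqref{eqn:dXjdt} once we invoke Lemma \ref{lem:dXj/dt}. The plan is to expand both sides as explicit linear combinations of the vertices $X_0,\ldots,X_{n-1}$ (with indices read modulo $n$) and check that the coefficients agree.

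First I would rewrite the left-hand side. By Lemma \ref{lem:dXj/dt} and \eqref{eqn:Dcomposed},
\begin{equation*}
D^{2m}(X_{j-m}) = \sum_{k=0}^{2m}(-1)^{k}\binom{2m}{k}X_{j-m+k} = \sum_{\ell=-m}^{m}(-1)^{m+\ell}\binom{2m}{m+\ell}X_{j+\ell},
\end{equation*}
after substituting $\ell = k-m$. Comparing with \eqref{eqn:u_function}, the coefficient of $X_{j+\ell}$ here is exactly $u_m(\ell)$ for $\ell = 0,\ldots,m$, and (using $u_m(\ell \bmod rn) = u_m(rn+\ell) = (-1)^{m+\ell}\binom{2m}{m+\ell}$ for $\ell = -m,\ldots,-1$, together with the symmetry $u_m(k)=u_m(rn-k)$ noted after the definition of $u_m$) it is $u_m(rn+\ell)$ for the negative values of $\ell$. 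Since $u_m$ vanishes on all indices outside $\{0,\ldots,m\}\cup\{rn-m,\ldots,rn-1\}$, this gives the compact form $D^{2m}(X_{j-m}) = \sum_{s=0}^{rn-1}u_m(s)\,X_{j+s}$, where $X$ is indexed modulo $n$.

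Next I would expand the right-hand side using Lemma \ref{lem:Mpexpression}. Since $M^m = \mathrm{circ}(b_0,\ldots,b_{n-1})$, the $(j,k)$ entry of $M^m$ is $b_{(k-j)\bmod n}$, so
\begin{equation*}
R_{m,j}X = \sum_{k=0}^{n-1}b_{(k-j)\bmod n}X_k = \sum_{i=0}^{n-1}b_i\,X_{j+i},
\end{equation*}
reindexing by $i=(k-j)\bmod n$. Now substitute $b_i = \sum_{q=0}^{r-1}u_m(qn+i)$ from Lemma \ref{lem:Mpexpression}; since $X_{j+i}=X_{j+qn+i}$ for every integer $q$ (indices mod $n$), the double sum collapses: writing $s = qn+i$ with $0\le i\le n-1$ and $0\le q\le r-1$, we get $R_{m,j}X = \sum_{s=0}^{rn-1}u_m(s)\,X_{j+s}$, which is precisely the expression obtained for $D^{2m}(X_{j-m})$ in the previous step. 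This completes the proof.

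There is no real obstacle here beyond careful index bookkeeping: one must keep straight the three moduli in play — indices of $X$ modulo $n$, indices of $u_m$ modulo $rn$, and the circulant offset of $M^m$ modulo $n$ — and use the symmetry relations for $u_m$ to identify the negatively indexed vertex contributions. Everything else is routine reindexing, and all the needed facts (the formula for $dX_j/dt$, the combinatorial expansion of $D^{2m}$, and the circulant description of $M^m$) are already available from Lemma \ref{lem:dXj/dt}, \eqref{eqn:Dcomposed}, and Lemma \ref{lem:Mpexpression}.
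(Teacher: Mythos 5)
Your proposal is correct and follows essentially the same route as the paper: both expand $R_{m,j}X$ via the circulant structure and the formula $b_i=\sum_{q}u_m(qn+i)$ from Lemma \ref{lem:Mpexpression}, collapse the double sum using the modulo-$n$ indexing of the vertices, and identify the result with the expansion $\sum_{k=0}^{2m}(-1)^k\binom{2m}{k}X_{j-m+k}$ of $D^{2m}(X_{j-m})$ from Lemma \ref{lem:dXj/dt}. The only cosmetic difference is that you match both sides against the common intermediate expression $\sum_{s=0}^{rn-1}u_m(s)X_{j+s}$, while the paper rewrites $R_{m,j}X$ directly into the binomial form.
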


\begin{proof}
The $j$th row of $M^m$ is given by $R_{m,j} = (b_{n-j}, b_{n-j+1},\ldots, b_{n-j-1}).$
Therefore
\begin{equation*}
    R_{m,j}X  = \sum_{k=0}^{n-1}b_{k}X_{j+k} = \sum_{k=0}^{r-1}\left(u_{m}(nk)X_j + u_{m}(nk+1)X_{j+1} + \cdots + u_{m}(nk+(n-1))X_{j+(n-1)}\right).
\end{equation*}
Taking into consideration of the zero terms of $u_{m}(k)$ and modulo $n$ indexing of the vertices, we have
\begin{multline*}
    R_{m,j}X = \sum_{k=0}^mu_{m}(k)X_{j+k} + \sum_{k=rn-m}^{rn-1}u_{m}(k)X_{j+k} = \sum_{k=0}^m(-1)^{m+k}\binom{2m}{m+k}X_{j+k} + \sum_{k=0}^{m-1}(-1)^k\binom{2m}{k}X_{j+rn-m+k}\\
    = \sum_{k=m}^{2m}(-1)^k\binom{2m}{k}X_{j-m+k} + \sum_{k=0}^{m-1}(-1)^k\binom{2m}{k}X_{j-m+k} = \sum_{k=0}^{2m}(-1)^k\binom{2m}{k}X_{j-m+k}.
\end{multline*}

Therefore, using Lemma \ref{lem:dXj/dt}, we have for all $j=0,1,\ldots, n-1,$
$$\frac{dX_j}{dt} = (-1)^{m+1}\sum_{k=0}^{2m}(-1)^k\binom{2m}{k}X_{j-m+k} = (-1)^{m+1}R_{m,j}X.$$ 
\end{proof}

From Lemma \ref{lem:Row_multiple_X_j}, we can therefore establish the matrix form of the semi-discrete polyharmonic flow is as given in \eqref{eqn:polyflow} for all $m\in \mathbb{N}.$ 

For any constant $c \in \mathbb{R}$, write $\tilde c = \left( c, \ldots, c \right)^T$ for a corresponding $n$-vector.

\begin{lemma}\label{lem:matrix_properties}
Let $E$ denote a $p \times p$ matrix and let $a_i\in \mathbb{R}$, $i=1, 2, \ldots, p$. We have the following properties:
\begin{enumerate}
    \item $M^m \tilde 1 = \tilde 0.$
    \item $\frac{d}{dt}(XE + (\tilde a_1, \tilde a_2, \ldots ,\tilde a_p)) = (-1)^{m+1}M^m(XE + (\tilde a_1, \tilde a_2, \ldots ,\tilde a_p)).$
\end{enumerate}
\end{lemma}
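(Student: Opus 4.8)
The plan is to prove the two properties directly from the explicit circulant structure of $M^m$ established in Lemma \ref{lem:Mpexpression}, together with linearity of \eqref{eqn:polyflow}. For property (1), I would observe that for any circulant matrix $B = \mathrm{circ}(b_0, \ldots, b_{n-1})$, each entry of $B\tilde 1$ equals $\sum_{k=0}^{n-1} b_k$, so it suffices to show that the row sum of $M^m$ vanishes. Using Lemma \ref{lem:Mpexpression}, that row sum is $\sum_{k=0}^{n-1} b_k = \sum_{k=0}^{n-1}\sum_{j=0}^{r-1} u_m(jn+k) = \sum_{\ell=0}^{rn-1} u_m(\ell)$, which by the definition \eqref{eqn:u_function} collapses (the zero terms drop out) to $\sum_{k=0}^{m}(-1)^{m+k}\binom{2m}{m+k} + \sum_{k=rn-m}^{rn-1}(-1)^{m+k-rn}\binom{2m}{m+k-rn} = \sum_{i=0}^{2m}(-1)^{i}\binom{2m}{i} = (1-1)^{2m} = 0$. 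Alternatively, and more cleanly, one can note $M\tilde 1 = \tilde 0$ is immediate from \eqref{matrix:M} (each row sums to $-2+1+1=0$), hence $M^m\tilde 1 = M^{m-1}(M\tilde 1) = \tilde 0$ by induction; I would probably present this shorter argument and mention the row-sum computation only as a remark.

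For property (2), the key point is that the flow \eqref{eqn:polyflow} is a linear system with constant coefficient matrix acting on the left, while $E$ acts on the right and the translation $(\tilde a_1, \ldots, \tilde a_p)$ is constant in $t$. First I would compute $\frac{d}{dt}(XE) = \left(\frac{dX}{dt}\right)E = (-1)^{m+1}(M^m X)E = (-1)^{m+1}M^m(XE)$, using that matrix multiplication on the right commutes with differentiation and with left multiplication by $M^m$ (associativity). Next, since each $\tilde a_i$ is time-independent, $\frac{d}{dt}(\tilde a_1, \ldots, \tilde a_p) = 0$, and by property (1) applied column-by-column, $M^m(\tilde a_1, \ldots, \tilde a_p) = (M^m \tilde a_1, \ldots, M^m \tilde a_p) = ((-1)\cdot\!\cdots) = 0$ because $M^m \tilde a_i = a_i M^m\tilde 1 = \tilde 0$. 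Adding the two contributions gives
\[
\frac{d}{dt}\bigl(XE + (\tilde a_1, \ldots, \tilde a_p)\bigr) = (-1)^{m+1}M^m(XE) + 0 = (-1)^{m+1}M^m\bigl(XE + (\tilde a_1, \ldots, \tilde a_p)\bigr),
\]
as claimed.

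This is essentially a bookkeeping lemma, so I do not expect a genuine obstacle. The only point requiring a little care is making sure the right-multiplication by $E$ and the addition of a constant affine term are handled correctly as matrix operations — in particular that $M^m$ annihilates the constant columns, which is exactly where property (1) is used, and that one is not secretly assuming $E$ is invertible or square in a way that matters (it need not be; the computation is purely formal). I would state explicitly at the start that $X = X(t)$ is any solution of \eqref{eqn:polyflow} so that the substitution $\frac{dX}{dt} = (-1)^{m+1}M^m X$ is legitimate, and then the whole argument is three lines.
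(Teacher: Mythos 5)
Your proposal is correct and follows essentially the same route as the paper: property (1) comes from the vanishing alternating binomial row sum of $M^m$ (your induction via $M\tilde 1=\tilde 0$ is a trivially equivalent shortcut), and property (2) is the same computation of differentiating, dropping the constant term, and adding back $(-1)^{m+1}M^m(\tilde a_1,\ldots,\tilde a_p)=0$ using property (1).
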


\begin{proof}
    The sum of each row of $M^m$ is 
    $$ \sum^{2m}_{k=0}(-1)^k\binom{2m}{k} = 0,$$ 
    leading to property 1.  For property 2, using property 1, we have
\begin{equation*}
    \frac{d}{dt}(XE + (\tilde a_1, \tilde a_2, \ldots, \tilde a_p))  = \frac{dX}{dt}E
     = (-1)^{m+1}M^mXE\\
     = (-1)^{m+1}M^m(XE) + (-1)^{m+1}M^m(\tilde a_1, \tilde a_2, \ldots, \tilde a_p)
     \end{equation*}
     and hence
     \begin{equation*}
         \frac{d}{dt}(XE + (\tilde a_1, \tilde a_2, \ldots, \tilde a_p)) 
     = (-1)^{m+1}M^m(XE + (\tilde a_1, \tilde a_2, \ldots, \tilde a_p)).
\end{equation*}
\end{proof}

\begin{remark}
\begin{enumerate}
    \item In \cite{bruckstein1995evolutions}, an analogous flow was considered where time is also discrete. Therein, for any circulant matrix $M,$ the evolution of an initial polygon $P$ at time step $n,$ denoted as $P(n),$ is given by
    \begin{align*}
       P(n) = MP(n-1)\\
       P(0) = P.
    \end{align*}
   Thus $P(n) = M^nP.$

   \item The polygon $X$ can be considered as a graph $G=\{V, E\}$ where $V= \{X_0,\ldots, X_{n-1}\}$ are the vertices of the polygon, and $E$ is the set of edges between consecutive vertices and the degree of each vertex is 2. Therefore, the matrix $L= -M$ is a Laplacian matrix with the corresponding definition as a Laplacian operator; see, e.g., \cite{chung1997spectral}. We have $-L^m = (-1)^{m+1}M^m$ and the semi-discrete polyharmonic flow can be associated with a repeated Laplacian form.
\end{enumerate}
    
\end{remark}

\subsection{Matrix properties}

The eigenvectors of any $n\times n$ circulant matrix are in $\mathbb{C}^n$ and are given by
\begin{equation}\label{eqn:epolygons}P_k = (1, \omega^k, \omega^{2k}, \ldots, \omega^{(n-1)k})^T, k = 0, 1, \ldots , n-1,
\end{equation}
where $\omega = e^{\frac{2\pi i}{n}}.$ 
Here the powers of $\omega$ are precisely the $n$th roots of unity.  We can think of each $P_k$ as a polygon by placing the entries of $P_k$ into $\mathbb{C}$ as the vertices of the polygon, and joining consecutive entries by arrows. This results in $P_0 = (1,1,\ldots,1)^T,$ a point, and the remaining $P_k$ form either regular convex polygons or star shaped regular polygons. Each $P_k$ and $P_{n-k},$ $k=1,\ldots, n-1,$ is a repeat of the regular polygon, but with the arrows in the opposite orientation. The exception is when $n$ is even and then $P_{\frac{n}{2}}$ is a  line interval where the arrows between points overlap each other $\frac{n}{2}$ times. For each $n,$ the collection $\{P_k\}_{k=1}^{n-1}$ produces all regular polygons whose number of sides is a factor of $n$ with $P_{\frac{n}{2}}$ as the exception for even $n.$

\begin{prop}\label{prop:chow}\cite{chow2007semidiscrete}
The set of eigenvectors $\{\frac{1}{\sqrt{n}}P_k\}^{n-1}_{k=0}$ of $M$ forms an orthonormal basis of $\mathbb{C}^n$ where the corresponding eigenvalue $\lambda_k$ for each $P_k$ is given by \begin{equation} \label{E:evalue}
  \lambda_k = -4\sin^2\left(\frac{\pi k}{n}\right).
\end{equation}
\end{prop}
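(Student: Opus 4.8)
The plan is to use the classical spectral decomposition of a circulant matrix, specialized to $M$; one could simply invoke the general statement from \cite{davismatrices, tee2007eigenvectors}, but the computation is short enough to do directly. First I would verify that each $P_k$ is an eigenvector. Writing $P_k = (1,\omega^k,\dots,\omega^{(n-1)k})^T$ with $\omega = e^{2\pi i/n}$, and recalling that the rows of $M$ and the entries of $P_k$ are indexed modulo $n$ (so the corner $1$'s of $M$ are exactly the wrap-around terms), the $j$th component of $MP_k$ is
\[
(MP_k)_j = \omega^{(j-1)k} - 2\,\omega^{jk} + \omega^{(j+1)k} = \bigl(\omega^{-k} - 2 + \omega^{k}\bigr)\,\omega^{jk},
\]
so $MP_k = \lambda_k P_k$ with $\lambda_k = \omega^k + \omega^{-k} - 2$. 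Since $\omega^k + \omega^{-k} = 2\cos(2\pi k/n)$, the half-angle identity $1-\cos\theta = 2\sin^2(\theta/2)$ gives $\lambda_k = 2\cos(2\pi k/n) - 2 = -4\sin^2(\pi k/n)$, which is \eqref{E:evalue}.

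Next I would establish orthonormality of the scaled vectors. For $k,\ell \in \{0,1,\dots,n-1\}$, with the Hermitian inner product on $\mathbb{C}^n$,
\[
\langle P_k, P_\ell\rangle = \sum_{j=0}^{n-1} \omega^{jk}\,\overline{\omega^{j\ell}} = \sum_{j=0}^{n-1} \bigl(\omega^{k-\ell}\bigr)^{j}.
\]
If $k=\ell$ this equals $n$; if $k\neq\ell$, then $\omega^{k-\ell}$ is an $n$th root of unity different from $1$, so the finite geometric series evaluates to $\bigl((\omega^{k-\ell})^n - 1\bigr)/(\omega^{k-\ell}-1) = 0$. Hence $\langle \tfrac{1}{\sqrt n}P_k, \tfrac{1}{\sqrt n}P_\ell\rangle = \delta_{k\ell}$, so $\{\tfrac{1}{\sqrt n}P_k\}_{k=0}^{n-1}$ is orthonormal; being $n$ pairwise orthogonal, hence linearly independent, vectors in the $n$-dimensional space $\mathbb{C}^n$, they form an orthonormal basis, which completes the proof.

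There is no genuine obstacle here: this is the standard eigenstructure of a circulant matrix, and each step is a one-line computation. The only points needing a little care are bookkeeping ones — reading indices of $M$ and of $P_k$ modulo $n$, and fixing the convention for which slot of the inner product is conjugate-linear, neither of which affects the conclusion since all $\lambda_k$ are real.
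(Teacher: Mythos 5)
Your argument is correct and is the standard direct verification of the circulant eigenstructure: checking $MP_k=\lambda_k P_k$ componentwise with $\lambda_k=\omega^k+\omega^{-k}-2=-4\sin^2(\pi k/n)$, and orthonormality via the geometric-series identity for roots of unity. The paper gives no proof of its own, citing the result to \cite{chow2007semidiscrete} (and the circulant-matrix references), where essentially this same computation appears, so your route matches the intended one.
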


Let the eigenvalues of the matrix $(-1)^{m+1}M^m$ corresponding to the eigenvectors $P_k$ be denoted as $\lambda_{m,k}.$ Then  
\begin{equation}\label{eqn:eigenvalues}
    \lambda_{m,k} \colon= (-1)^{m+1}\lambda_k^m.
\end{equation}
The eigenvalues of a circulant matrix occur in conjugate pairs with some exceptions. That is, $\lambda_k = \overline{\lambda_{n-k}}$ with the exception of $\lambda_0$ and $\lambda_{\frac{n}{2}}$ for $n$ even (\cite{davismatrices, tee2007eigenvectors}). Furthermore, if the circulant matrix is real and symmetric, the eigenvalues are real and thus each eigenvalue pair is $\lambda_k = \lambda_{n-k}$.  This holds for $(-1)^{m+1}M^m$ for each $m$ and so we have $\lambda_{m,0} = 0, \lambda_{m,k} = \lambda_{m,n-k}$ and $\lambda_{m, \frac{n}{2}}$ does not have an equal pair for $n$ even. Also $\lambda_{m,k} < \lambda_{m,1} < 0$ for all $k = 2,\ldots ,\lfloor \frac{n}{2} \rfloor.$

\begin{lemma}\label{lem:nullspace_matrix}
Given a vector $X\in\mathbb{R}^n,$ if $(-1)^{m+1}M^mX = 0$ then $X$ is a constant vector. That is  $X = \tilde c$ for some constant $c\in \mathbb{R}$.
\end{lemma}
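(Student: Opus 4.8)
The plan is to diagonalise $(-1)^{m+1}M^m$ using the orthonormal eigenbasis supplied by Proposition \ref{prop:chow}. First I would regard $X\in\mathbb{R}^n$ as an element of $\mathbb{C}^n$ and expand it in the orthonormal basis $\{\tfrac{1}{\sqrt n}P_k\}_{k=0}^{n-1}$, writing $X=\sum_{k=0}^{n-1}a_kP_k$ with $a_k\in\mathbb{C}$. Since each $P_k$ is an eigenvector of $(-1)^{m+1}M^m$ with eigenvalue $\lambda_{m,k}=(-1)^{m+1}\lambda_k^m$, the hypothesis $(-1)^{m+1}M^mX=0$ becomes $\sum_{k=0}^{n-1}a_k\lambda_{m,k}P_k=0$, and by linear independence of the $P_k$ we get $a_k\lambda_{m,k}=0$ for every $k$.

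The key point is that $\lambda_{m,k}\neq 0$ for $k=1,\ldots,n-1$. This follows from \eqref{eqn:eigenvalues} and \eqref{E:evalue}: $\lambda_k=-4\sin^2(\pi k/n)$ vanishes only when $k/n\in\mathbb{Z}$, i.e. only for $k=0$ in the range $0\le k\le n-1$, and a nonzero real number raised to the power $m\ge 1$ remains nonzero; equivalently one may simply invoke the already-recorded facts $\lambda_{m,0}=0$ and $\lambda_{m,k}<\lambda_{m,1}<0$ for the remaining indices. Hence $a_k=0$ for all $k\neq 0$, so $X=a_0P_0=a_0\tilde 1$. Finally, since $X$ is real, $a_0=X_0\in\mathbb{R}$, and setting $c=a_0$ gives $X=\tilde c$, as claimed.

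I expect no real obstacle in this argument; the only point requiring a word of care is that the eigenexpansion is carried out over $\mathbb{C}^n$ whereas $X$ is real, but this is harmless since the conclusion forces the single surviving coefficient $a_0$ to equal the real number $X_0$. Equivalently, one could phrase the whole proof as the observation that the null space of the real symmetric circulant matrix $(-1)^{m+1}M^m$ is exactly the one-dimensional space spanned by $P_0=\tilde 1$, because $0$ occurs as an eigenvalue with multiplicity one among the $n$ eigenvalues $\lambda_{m,k}$.
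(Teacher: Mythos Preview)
Your proposal is correct and takes essentially the same approach as the paper: both identify the null space of $(-1)^{m+1}M^m$ with the eigenspace for the eigenvalue $\lambda_{m,0}=0$, which is one-dimensional and spanned by $P_0=\tilde 1$. The paper states this in a single sentence, while you spell out the eigenexpansion explicitly, but the underlying idea is identical (indeed, your final paragraph is exactly the paper's proof).
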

\begin{proof}
    The null space of $(-1)^{m+1}M^m$ is given by the eigenspace of $\lambda_{m,0} = 0$ which is spanned by corresponding eigenvector $P_0 = (1,1,\ldots, 1)^T.$ Therefore, any vector in the null space must be a constant vector. 
\end{proof}

\section{Planar solutions to the semi-discrete polyharmonic flow} \label{S:planar}

In this section, we consider solutions of \eqref{eqn:polyflow}.  Firstly, we consider self similar solutions, then, secondly, we consider the solution with general initial data $X(0) = X^0.$

\subsection{Planar self-similar solutions}\label{sec:selfsim_solutions}
Here we are interested in \emph{self-similar solutions} to \eqref{eqn:polyflow}, that is, solutions $X(t)$ related to the initial polygon $X(0) = X^0$ via the formula
\begin{equation}
X(t) =  g(t)X^0R(f(t)) + \mathbf{h}(t),
\end{equation} 
where $g,f \colon\mathbb{R} \rightarrow\mathbb{R}$ are differentiable functions.  Here $g(t)$ represents scaling ($g(t) \neq 0$ for all $t$), $R(f(t))$ is the $2\times 2$ rotation matrix by the angle $f(t),$ and ${h}(t)$ is an $n\times 2$ matrix corresponding to translation, where $\mathbf{h}(t) = (\vec{h}_1(t)\ \vec{h}_2(t))$ and the elements of vectors $\vec{h}_1(t)$ and $\vec{h}_2(t)$, $h_1, h_2\colon \mathbb{R} \rightarrow \mathbb{R},$ are differentiable functions. We have $g(0) = 1,\  f(0) = 0,$ and $\mathbf{h}(0) = 0_{n\times 2},$ where $0_{n\times 2}$ is the $n\times 2$ zero matrix, such that $X(0) = X^0.$ Note that when considering $X$ as a polygon in $\mathbb{C},$ rotation is given by $e^{if(t)}$ and translation $\mathbf{h}(t) \in \mathbb{C}^n.$ 

\begin{prop}\label{prop:selfsimilar}
If a family of polygons in the plane $X(t)$ is a self-similar solution to the flow \eqref{eqn:polyflow} by scaling, then $X(t)$ is of the form 
\begin{equation*}
X(t) = e^{\lambda_{m,k} t}(c_1 P_k + c_2 P_{n-k})
\end{equation*}
for any $k \in \{1, 2, \ldots, \lfloor \frac{n}{2} \rfloor\}$ and any $c_1, c_2 \in \mathbb{C},$
where $\lambda_{m,k}$ is the corresponding eigenvalue for the eigenvectors $P_k$ and $P_{n-k}$ given by \eqref{eqn:eigenvalues}.
\end{prop}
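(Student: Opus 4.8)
The plan is to diagonalise the flow in the Fourier basis $\{P_k\}_{k=0}^{n-1}$ and then impose the self-similarity ansatz. First I would write the initial polygon as $X^0 = \sum_{k=0}^{n-1} a_k P_k$ for complex coefficients $a_k$, which is possible since $\{\frac{1}{\sqrt n}P_k\}$ is an orthonormal basis of $\mathbb{C}^n$ by Proposition \ref{prop:chow}. Since each $P_k$ is an eigenvector of $(-1)^{m+1}M^m$ with eigenvalue $\lambda_{m,k}$ from \eqref{eqn:eigenvalues}, the general solution of \eqref{eqn:polyflow} in the plane (thinking of $X$ as a vector in $\mathbb{C}^n$) is $X(t) = \sum_{k=0}^{n-1} a_k e^{\lambda_{m,k} t} P_k$. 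On the other hand, the self-similar ansatz $X(t) = g(t) e^{i f(t)} X^0 + \mathbf h(t)$ expresses $X(t)$ as a (time-dependent) scalar multiple of $X^0$ plus a translation, i.e. plus a multiple of $P_0 = \tilde 1$. The strategy is to equate these two descriptions.

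The key steps, in order: (1) Expand the ansatz: $X(t) = g(t)e^{if(t)} \sum_k a_k P_k + h(t) P_0$ for a scalar function $h$. Comparing with $X(t) = \sum_k a_k e^{\lambda_{m,k}t} P_k$ and using linear independence of the $P_k$, we get for each $k \geq 1$ the identity $a_k e^{\lambda_{m,k} t} = g(t) e^{if(t)} a_k$, and for $k=0$ an equation that only fixes $h(t)$ (one checks $\lambda_{m,0}=0$ and $a_0$ translation is harmless, so WLOG $a_0=0$, i.e. the polygon is centred). (2) For every index $k$ with $a_k \neq 0$, this forces $g(t)e^{if(t)} = e^{\lambda_{m,k} t}$; since the left side is independent of $k$, all such $k$ must share the same eigenvalue $\lambda_{m,k}$. (3) Recall from the discussion after \eqref{eqn:eigenvalues} that $\lambda_{m,k} = \lambda_{m,n-k}$ and that these are the only coincidences among the $\lambda_{m,k}$ (for $k = 1,\dots,\lfloor n/2\rfloor$ the values $\lambda_{m,1} > \lambda_{m,2} > \cdots$ are strictly ordered and negative). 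Hence the set of indices with $a_k \neq 0$ is contained in a single pair $\{k, n-k\}$, giving $X^0 = c_1 P_k + c_2 P_{n-k}$ and therefore $X(t) = e^{\lambda_{m,k} t}(c_1 P_k + c_2 P_{n-k})$. (4) Finally, verify the converse direction is consistent with genuine self-similarity: $e^{\lambda_{m,k}t}$ is a real positive scaling factor $g(t)$ with $f(t) \equiv 0$ and $\mathbf h(t)\equiv 0$, so such an $X(t)$ is indeed self-similar by scaling — confirming the characterisation is sharp and the stated family is exactly the solution set.

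The main obstacle I anticipate is step (2)–(3): carefully justifying that $g(t)e^{if(t)}$ being a single function of $t$ forces all active Fourier modes to have a common eigenvalue, and then invoking the exact multiplicity structure of the spectrum of $(-1)^{m+1}M^m$ to conclude the active modes form one conjugate pair $\{k,n-k\}$. This requires knowing that $\lambda_{m,j} = \lambda_{m,k}$ (for $j,k \in \{1,\dots,n-1\}$) if and only if $j \in \{k, n-k\}$, which follows from $\lambda_k = -4\sin^2(\pi k/n)$ being strictly monotone on $\{0,1,\dots,\lfloor n/2\rfloor\}$ together with the symmetry $\lambda_k = \lambda_{n-k}$. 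A secondary (minor) point is handling the $P_0$ component: one should note at the outset that the translation term in the ansatz absorbs any $a_0 P_0$ contribution, so there is no loss of generality in assuming $X^0$ has zero centroid, and the case $X^0 = \tilde c$ (a degenerate "point polygon") is excluded since then $X(t)$ is not a genuine polygon and $g$ cannot be meaningfully normalised.
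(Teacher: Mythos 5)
Your argument is correct, but it takes a different route from the paper's. The paper never expands $X^0$ in the eigenbasis: it substitutes the pure-scaling ansatz $X(t)=g(t)X^0$ directly into \eqref{eqn:polyflow}, deduces $g(t)=e^{at}$ with $a=g'(0)$, and is left with the eigenvalue problem $\left[(-1)^{m+1}M^m-aI_n\right]X^0=0$, which it solves via the factorisation of the circulant determinant, so that $a=\lambda_{m,k}$ and $X^0$ lies in the null space spanned by $P_k$ and $P_{n-k}$. You instead write down the full solution $X(t)=\sum_k a_k(0)e^{\lambda_{m,k}t}P_k$ first (essentially the representation formula of Theorem \ref{thm:plane}, which in the paper comes later but is independent and standard for a constant-coefficient linear system) and then match it against the self-similar ansatz, using linear independence of the $P_k$ to force all active modes to share one eigenvalue, and the multiplicity structure $\lambda_{m,j}=\lambda_{m,k}\iff j\in\{k,n-k\}$ (strict monotonicity of $\sin^2(\pi k/n)$ on $0\le k\le n/2$) to conclude the active modes form a single conjugate pair. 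Both arguments hinge on the same spectral facts — the paper's claim that the null space of $(-1)^{m+1}M^m-\lambda_{m,k}I_n$ is exactly $\mathrm{span}\{P_k,P_{n-k}\}$ uses precisely the multiplicity statement you spell out — but yours buys the verification of the converse and simultaneously accommodates the rotation and translation terms (overlapping with Propositions \ref{prop:selfsim_rotate_planar} and \ref{prop:selfsim_plane_translate}), while the paper's is more self-contained, not relying on the general solution formula. Two small points of care: since the proposition concerns self-similarity by scaling about the origin, under that strict ansatz the vanishing of $a_0$ is a consequence (equality of $\lambda_{m,0}=0$ with a negative eigenvalue is impossible) rather than a normalisation absorbed by $\mathbf{h}$; and the paper keeps, rather than excludes, the degenerate cases $a=\lambda_{m,0}$ (a stationary point polygon) and, for $n$ even, $a=\lambda_{m,\frac n2}$ with the single basis element $P_{\frac n2}$.
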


\begin{proof}
    For any fixed $m\in \mathbb{N}$, since $X(t) = g(t)X^0$ is to satisfy equation \eqref{eqn:polyflow} for all $t$, we use this to find an equivalent expression in terms of $X^0.$

We have
\begin{equation}
\frac{dX}{dt} = \frac{dg}{dt}X^0 = (-1)^{m+1}M^mg(t)X^0.
\end{equation}
This can be rearranged to
\begin{equation}\label{eqn:selfsimform}
\frac{g'(t)}{g(t)}X^0 = (-1)^{m+1}M^{m}X^0.
\end{equation}

Since equation \eqref{eqn:selfsimform} holds for all $t$ including $t=0,$ applying the initial conditions, the original polygon $X^0$ satisfies 
\begin{equation}
g'(0)X^0  = (-1)^{m+1}M^mX^0.
\end{equation}

Writing $ a = g'(0)$, the above equation becomes 
\begin{equation}\label{eqn:ABeqn}
a X^0  = (-1)^{m+1}M^mX^0,
\end{equation}
which indicates the function $g$ has to satisfy  $\frac{g'(t)}{g(t)} = g'(0) = a$ for all $t$ for equation \eqref{eqn:selfsimform} to hold. Solving this scalar differential equation with $g\left( 0\right) =1$ gives the function $g(t) = e^{at}.$ 

From \eqref{eqn:ABeqn} we have
\begin{equation*}
    \left[(-1)^{m+1}M^m - a I_n\right]X^0 = 0_{n\times 2},
\end{equation*}
where $I_n$ is the $n\times n$ identity matrix. For this equation to have a nonzero solution $X^0$, we require $\det\left[(-1)^{m+1}M^m - a I_n\right]=0$.  The matrix $(-1)^{m+1}M^m - a I_n$ is circulant, with eigenvalues $\lambda_{m,k} - a$ for $k = 0,1,\ldots \lfloor \frac{n}{2} \rfloor,$ so 
\begin{equation*}
    \det\left[(-1)^{m+1}M^m - a I_n\right] = \prod_{k=0}^{\lfloor \frac{n}{2} \rfloor}(\lambda_{m,k} - a ).
\end{equation*}
Hence, for $(-1)^{m+1}M^m - a I_n$ to be non-invertible, we require $\lambda_{m,k} - a = 0$ for some $k\in\{0,1,\ldots, \lfloor \frac{n}{2} \rfloor\}.$ Since $a$ is constant, it can be equal to any particular $\lambda_{m,k}$; the corresponding null space of $(-1)^{m+1}M^m - \lambda_{m,k} I_n$ is then the span of $P_k$ and $P_{n-k}.$ In the case of $a=\lambda_{m,0}$, we have only the corresponding polygon $P_0$  and similarly if $a=\lambda_{m,\frac{n}{2}}$ and $n$ even, we have only $P_{\frac{n}{2}}$. Therefore, in each case we have the scaling factor $g(t) = e^{\lambda_{m,k}t}$ and solving \eqref{eqn:ABeqn} gives $X^0 = c_1P_k + c_2P_{n-k}$ for complex coefficients $c_1$ and $c_2$ except for the special cases with only one basis element.
When $a = \lambda_{m,0} = 0$, the scaling factor $g \equiv 1$ and so $X(t)$ is just the initial polygon $X^0$ for all $t$ given by $a_0P_0 = \tilde a_0$ in $\mathbb{C}^n$. 
\end{proof}

We can see from Proposition \ref{prop:selfsimilar} that each regular polygon $P_k$ is a self-similar solution, and that scaling self-similar solutions are necessarily affine transformations of these regular polygons.

\begin{prop}\label{prop:selfsim_rotate_planar}
Consider the family of polygons in the plane, $X(t),$ such that \begin{equation}\label{eqn:rotateselfsim}
    X(t) = X^0R(f(t)),
\end{equation}
where $R(f(t))$ represents the rotation of the polygon $X^0$ by the angle $f(t)$ such that $f(0) = 0.$ If $X(t)$ satisfies \eqref{eqn:polyflow} for all $t$, then $f \equiv 0$ and $X^0$ corresponds to a single point in the plane.
That is, there are no nontrivial self-similar solutions that move by pure rotation under the semi-discrete polyharmonic flow.
\end{prop}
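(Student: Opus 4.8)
The plan is to pass to the complex picture of Section~\ref{S:planar}: a planar polygon is a vector $X^{0}\in\mathbb{C}^{n}$, and rotation by the angle $f(t)$ is multiplication by $e^{if(t)}$, so \eqref{eqn:rotateselfsim} reads $X(t)=e^{if(t)}X^{0}$. Differentiating gives $\frac{dX}{dt}=if'(t)\,e^{if(t)}X^{0}$, while the right-hand side of \eqref{eqn:polyflow} is $(-1)^{m+1}M^{m}\bigl(e^{if(t)}X^{0}\bigr)=e^{if(t)}(-1)^{m+1}M^{m}X^{0}$, since the scalar $e^{if(t)}$ commutes with the matrix $M^{m}$. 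Cancelling the nonvanishing factor $e^{if(t)}$, the hypothesis that $X(t)$ solves \eqref{eqn:polyflow} for all $t$ becomes the identity
\begin{equation*}
if'(t)\,X^{0}=(-1)^{m+1}M^{m}X^{0}\qquad\text{for all }t.
\end{equation*}

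The key point is that the right-hand side does not depend on $t$. If $X^{0}$ is not the zero polygon, choose a component of $X^{0}$ that is nonzero; then the displayed identity at times $t$ and $0$ forces $f'(t)=f'(0)=:\beta$ for all $t$, so $f'$ is constant and $X^{0}$ satisfies $(-1)^{m+1}M^{m}X^{0}=i\beta X^{0}$. By Proposition~\ref{prop:chow} together with \eqref{eqn:eigenvalues}, every eigenvalue $\lambda_{m,k}$ of $(-1)^{m+1}M^{m}$ is real; hence a purely imaginary number $i\beta$ with $\beta\in\mathbb{R}$ can be an eigenvalue only if $\beta=0$, in which case $(-1)^{m+1}M^{m}X^{0}=0$. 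Applying Lemma~\ref{lem:nullspace_matrix} to the real and imaginary parts of $X^{0}$ then shows $X^{0}$ is a constant vector, i.e.\ all its vertices coincide --- a single point in the plane --- and $f'\equiv\beta=0$ with $f(0)=0$ gives $f\equiv 0$. The excluded degenerate case $X^{0}=\tilde 0$ gives $X(t)\equiv\tilde 0$, a single (stationary) point, again consistent with the claim.

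I expect this argument to be essentially routine; the only points requiring care are the legitimacy of cancelling $e^{if(t)}$ and the deduction that $f'(t)X^{0}$ being independent of $t$, with $X^{0}\neq\tilde 0$, forces $f'$ to be constant. If one prefers to avoid the complex identification and argue directly with the $n\times 2$ real matrices, the same conclusion follows by using $\frac{d}{dt}R(f(t))=f'(t)R(f(t))J$, where $J$ is the matrix of rotation by $\tfrac{\pi}{2}$, to obtain $(-1)^{m+1}M^{m}X^{0}=\beta X^{0}J$ with $\beta$ constant as above; applying $(-1)^{m+1}M^{m}$ once more and using $J^{2}=-I_{2}$ yields $\bigl((-1)^{m+1}M^{m}\bigr)^{2}X^{0}=-\beta^{2}X^{0}$, and since $(-1)^{m+1}M^{m}$ is symmetric with nonpositive eigenvalues its square is positive semi-definite, so $-\beta^{2}\geq 0$, whence $\beta=0$, $M^{m}X^{0}=0$, and Lemma~\ref{lem:nullspace_matrix} applied columnwise finishes the proof.
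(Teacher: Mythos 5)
Your proposal is correct, and your primary (complex) argument is a genuinely more direct route than the paper's. The paper stays in the real $n\times 2$ picture: it computes $\bigl[\tfrac{d}{dt}R(f(t))\bigr]R^{-1}(f(t))=-f'(t)R\bigl(-\tfrac{\pi}{2}\bigr)$, deduces $f'\equiv b$, splits $X^0=(\vec x\ \vec y)$ to get $-b\vec x=(-1)^{m+1}M^m\vec y$ and $-b\vec y=-(-1)^{m+1}M^m\vec x$, squares to obtain $\bigl[b^2I_n+M^{2m}\bigr]X^0=0_{n\times 2}$, and then argues via the circulant determinant $\prod_k\bigl(b^2+\lambda_{m,k}^2\bigr)$ that $b=0$. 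Your complex identification turns the rotation into the scalar factor $e^{if(t)}$, so the whole issue collapses to the single eigenvalue equation $(-1)^{m+1}M^mX^0=i\beta X^0$, and $\beta=0$ follows at once from the reality of the spectrum of the real symmetric matrix $(-1)^{m+1}M^m$ (Proposition~\ref{prop:chow} and \eqref{eqn:eigenvalues}); no squaring or determinant computation is needed. Your finish via Lemma~\ref{lem:nullspace_matrix} applied to real and imaginary parts, and your explicit treatment of the degenerate case $X^0=\tilde 0$ and of the constancy of $f'$ (using a nonzero component of $X^0$), are if anything slightly more careful than the paper, which leaves the final passage from $b=0$ to $X^0$ being a constant vector implicit. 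The real-matrix alternative you sketch at the end (obtaining $M^{2m}X^0=-\beta^2X^0$ and invoking positive semi-definiteness) is essentially the paper's own proof, phrased with definiteness in place of the determinant factorisation; the only difference from the paper is a sign convention for the rotation ($R(\pi/2)=-R(-\pi/2)$), which is immaterial since only $\beta^2$ enters.
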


\begin{proof}
    Since $X(t) = X^0R(f(t))$ is to satisfy \eqref{eqn:polyflow}, we have
    \begin{equation*}
        X^0\frac{d}{dt} R(f(t)) = (-1)^{m+1}M^mX^0R(f(t)),
    \end{equation*} 
 and so
    \begin{equation*}
        X^0\left[ \frac{d}{dt}R(f(t))\right] R^{-1}(f(t)) = (-1)^{m+1}M^mX^0.
    \end{equation*}
We have 
\begin{equation*}
    \left[ \frac{d}{dt}R(f(t))\right]R^{-1}(f(t))  = -f'(t)R\left(f(t) - \frac{\pi}{2}\right)R(-f(t))\\
     = -f'(t)R\left(-\frac{\pi}{2}\right).
\end{equation*}
Since the above is to be true for all $t,$ then letting $f'(0) = b,$ we also have
\begin{equation}\label{eqn:rotate_expression}
    -bX^0R\left(-\frac{\pi}{2} \right)  = (-1)^{m+1}M^mX^0,
\end{equation}
    and so we require $f'(t) = b$ and thus $f(t) = b\, t + c$ for some constant $c.$

We denote $X^0$ in terms of the coordinates of its vertices, $X^0 = (\vec{x}\ \vec{y})$ where $\vec{x} = (x_0, x_1,\ldots, x_{n-1})^T$ and  $\vec{y} = (y_0, y_1,\ldots, y_{n-1})^T$ and each vertex of $X^0$ is given by $X_j = (x_j, y_j)$ for $j=0,\ldots, n-1.$ Then from \eqref{eqn:rotate_expression} we have
\begin{equation*}
    -b(-\vec{y}\ \vec{x})  = (-1)^{m+1}M^m(\vec{x}\ \vec{y}),
\end{equation*}
and consequently since $-b\vec{x} = (-1)^{m+1}M^m\vec{y}$ and $b\vec{y} = (-1)^{m+1}M^m(-\vec{x}),$ $-b^2\vec{x} = M^{2m}\vec{x}$ and similarly $-b^2\vec{y} =M^{2m}\vec{y}.$
This leads to the expression
\begin{equation}\label{eqn:rotationselfsimconclusion}
    \left[b^2I_n + M^{2m}\right]X^0 = 0_{n\times 2}.
\end{equation}
Again, for a nontrivial solution $X^0$, we require the determinant of $b^2I_n + M^{2m}$ to be equal to zero.
Since $b^2I_n + M^{2m}$ is a circulant matrix, its eigenvalues are $b^2 + \lambda_{m,k}^2$ and thus
\begin{equation*}
    \det\left[b^2I_n + M^{2m}\right] = \prod_{k=0}^{n-1}(b^2 + \lambda_{m,k}^2).
\end{equation*}
There is a zero factor above when $\lambda_{m,0}=0$ provided $b=0$.  Hence $f \equiv c.$ Since $f(0) = 0,$ then $c=0.$  Thus, $X \equiv \tilde a_0$ with complex constant $a_0$, is the only self-similar solution by rotation and is trivial.
\end{proof}

\begin{prop}\label{prop:selfsim_plane_translate}
  Consider the family of polygons in the plane, $X(t),$ such that \begin{equation}\label{eqn:translateselfsim}
    X(t) = X^0 + \mathbf{h}(t),
\end{equation}
where $\mathbf{h}(t)$ represents the translation of the polygon $X^0$ and $\mathbf{h}(0) = \tilde 0.$ If $X(t)$ satisfies \eqref{eqn:polyflow} for all $t$ then $\mathbf{h} \equiv \tilde 0$ and $X^0$ corresponds to a single point in the plane.
That is, there are no nontrivial self-similar solutions by translation under the semi-discrete polyharmonic flow.  
\end{prop}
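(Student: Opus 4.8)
The plan is to mimic the structure of the proofs of Propositions~\ref{prop:selfsimilar} and \ref{prop:selfsim_rotate_planar}: substitute the ansatz \eqref{eqn:translateselfsim} into \eqref{eqn:polyflow}, extract a differential constraint on $\mathbf{h}$, reduce to a linear-algebraic eigenvalue condition, and use Lemma~\ref{lem:nullspace_matrix} (equivalently, that $0$ is a simple eigenvalue of $(-1)^{m+1}M^m$ with eigenspace spanned by $P_0 = \tilde 1$) to conclude that the only possibility is the trivial one.

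First I would differentiate $X(t) = X^0 + \mathbf{h}(t)$ in time, giving $\frac{dX}{dt} = \mathbf{h}'(t)$, while the right-hand side of \eqref{eqn:polyflow} is $(-1)^{m+1}M^m(X^0 + \mathbf{h}(t))$. Since $M^m X^0$ is a \emph{constant} $n\times 2$ matrix, this yields
\begin{equation*}
\mathbf{h}'(t) = (-1)^{m+1}M^m X^0 + (-1)^{m+1}M^m \mathbf{h}(t).
\end{equation*}
Now I would use the defining property of a translation: each column of $\mathbf{h}(t)$ is a constant vector, i.e. $\mathbf{h}(t) = (\widetilde{h_1(t)}\ \widetilde{h_2(t)})$ with $h_1, h_2 : \mathbb{R}\to\mathbb{R}$. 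By property~1 of Lemma~\ref{lem:matrix_properties}, $M^m\tilde c = \tilde 0$ for any constant $c$, so $M^m\mathbf{h}(t) = 0_{n\times 2}$ for all $t$. Hence the equation collapses to $\mathbf{h}'(t) = (-1)^{m+1}M^m X^0$, a constant, so $\mathbf{h}(t) = (-1)^{m+1}M^m X^0\, t$ using $\mathbf{h}(0) = \tilde 0$. But for $\mathbf{h}(t)$ to be a genuine translation (constant columns) for all $t$, we need $(-1)^{m+1}M^m X^0$ to have constant columns, i.e. each column of $M^m X^0$ lies in the span of $\tilde 1$. Since $M^m$ is symmetric with $\tilde 1$ in its kernel, its range is orthogonal to $\tilde 1$; the only vector both in the range of $M^m$ and in $\operatorname{span}\{\tilde 1\}$ is $\tilde 0$. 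Therefore $M^m X^0 = 0_{n\times 2}$, and applying Lemma~\ref{lem:nullspace_matrix} to each column, $X^0 = (\tilde a_1\ \tilde a_2)$ is a constant matrix, i.e. $X^0$ is a single point in the plane. Then $\mathbf{h}'\equiv \tilde 0$, so $\mathbf{h}\equiv \tilde 0$ by the initial condition, completing the argument.

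I expect no serious obstacle here; the main subtlety is simply being careful about what ``self-similar by translation'' forces, namely that $\mathbf{h}(t)$ must have constant columns for \emph{every} $t$, not just that it solves the ODE — this is exactly what rules out the linearly-growing solution $\mathbf{h}(t) = (-1)^{m+1}M^m X^0\, t$ unless that coefficient vanishes. One could alternatively argue via the Fourier/eigenbasis decomposition of $X^0$ as in Proposition~\ref{prop:selfsimilar}: writing $X^0 = \sum_k c_k P_k$, translation corresponds to adding a multiple of $P_0$, which is preserved by the flow only if all $c_k$ with $k\neq 0$ vanish; but the direct kernel argument above is shorter and I would use it.
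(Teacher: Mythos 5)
Your proposal is correct and follows essentially the same route as the paper's proof: substitute the translation ansatz, use $M^m\mathbf{h}(t)=0$ from Lemma \ref{lem:matrix_properties} to reduce to $\mathbf{h}'(t)=(-1)^{m+1}M^mX^0$, force this constant to have equal entries in each column, conclude $X^0$ is a point via the kernel of $M^m$, and then $\mathbf{h}\equiv\tilde 0$ from the initial condition. The only difference is cosmetic: you spell out, via symmetry of $M^m$ and orthogonality of its range to $\tilde 1$, why a constant-column vector in the range must vanish, a step the paper asserts without elaboration.
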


\begin{proof}
Since $\mathbf{h}(t)$ translates every vertex of $X^0$ in the same way, it follows that $\mathbf{h}(t)$ is a vector in $\mathbb{C}^n$ with every entry equal to the same function of $t.$ Since the function \eqref{eqn:translateselfsim} is to satisfy \eqref{eqn:polyflow}, using properties of the matrix from Lemma \ref{lem:matrix_properties} we have
\begin{equation*}
    \frac{dX}{dt} = \mathbf{h}'(t) = (-1)^{m+1}M^m(X^0 + \mathbf{h}(t))
    = (-1)^{m+1}M^mX^0.
\end{equation*}
Therefore, $\mathbf{h}'(t)$ is a constant vector with all entries equal.  Now the right-hand side above can only be equal to such a vector if every vertex of $X^0$ is the same, that is $X^0 = \tilde a_0$ for a complex constant $a_0.$  It then follows from Lemma \ref{lem:matrix_properties} that $\mathbf{h}'(t) = (-1)^{m+1}M^m \tilde a_0 = \tilde 0$ and so $\mathbf{h} \equiv \tilde c$, for some complex constant $c.$  Since $\mathbf{h}(0)=\tilde 0$ it follows that $\mathbf{h} \equiv \tilde 0$, proving that $X(t) \equiv \tilde a_0$ is the only possible translator and is trivial.
 \end{proof}

\subsection{Planar solutions for general initial data}\label{sec:flow_solutions}

\begin{theorem}\label{thm:plane}
Given an initial polygon $X^0$ with $n$ vertices in $\mathbb{R}^2$ and any $m\in \mathbb{N}$, the equation \eqref{eqn:polyflow} with initial data $X\left(0 \right) = X^0$ has a unique solution given by
\begin{equation} \label{E:flowsoln}
  X(t) = a_0(0)P_0 + \sum_{k=1}^{n-1}a_k(0)e^{\lambda_{m,k}t}P_k.
\end{equation}
The solution exists for all time and converges exponentially to a point.
Under appropriate rescaling, the solution is asymptotic as $t\rightarrow \infty$ to an affine transformation of a regular polygon with $n$ vertices. 

\end{theorem}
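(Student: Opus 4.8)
The plan is to diagonalize the constant coefficient system using the orthonormal eigenbasis $\{\frac{1}{\sqrt n}P_k\}_{k=0}^{n-1}$ from Proposition \ref{prop:chow}, which simultaneously diagonalizes every power $M^m$ and hence $(-1)^{m+1}M^m$ with eigenvalues $\lambda_{m,k}$ given in \eqref{eqn:eigenvalues}. First I would expand the initial polygon in this basis, writing $X^0 = \sum_{k=0}^{n-1} a_k(0) P_k$, where the $a_k(0)\in\mathbb{C}$ (equivalently, a pair of real Fourier coefficients for each of the two coordinate columns) are uniquely determined by orthogonality: $a_k(0) = \frac{1}{n}\langle X^0, P_k\rangle$. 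Substituting the ansatz $X(t) = \sum_k a_k(t) P_k$ into \eqref{eqn:polyflow} and using $(-1)^{m+1}M^m P_k = \lambda_{m,k} P_k$ decouples the system into $n$ scalar ODEs $a_k'(t) = \lambda_{m,k} a_k(t)$, each with unique solution $a_k(t) = a_k(0)e^{\lambda_{m,k}t}$. Since $\lambda_{m,0}=0$, the $k=0$ mode is constant, giving exactly \eqref{E:flowsoln}. Uniqueness follows either from this representation or from standard linear ODE theory, as already noted in the remarks following the flow definition.

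For the convergence-to-a-point claim: from the eigenvalue formula $\lambda_k = -4\sin^2(\pi k/n)$ and $\lambda_{m,k} = (-1)^{m+1}\lambda_k^m$, one checks $\lambda_{m,k} = -4^m\sin^{2m}(\pi k/n) < 0$ for all $k=1,\ldots,n-1$ (here the sign alternation built into the flow is exactly what makes every nonzero mode decay rather than blow up — this is why $(-1)^{m+1}$ appears). Hence every term in the sum \eqref{E:flowsoln} with $k\geq 1$ decays exponentially, and $X(t)\to a_0(0)P_0 = \widetilde{a_0(0)}$, a single point (the centroid of the initial polygon, since $P_0=\tilde 1$ and $a_0(0)$ is the average of the vertices). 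The rate is governed by the slowest-decaying mode, $\lambda_{m,1} = -4^m\sin^{2m}(\pi/n)$, so convergence is exponential with this rate.

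For the rescaling/asymptotic-shape claim: subtract the limit point and rescale by the dominant decay rate, i.e. consider $\widehat X(t) := e^{-\lambda_{m,1}t}\big(X(t) - \widetilde{a_0(0)}\big)$. Then $\widehat X(t) = \sum_{k=1}^{n-1} a_k(0) e^{(\lambda_{m,k}-\lambda_{m,1})t} P_k$, and since $\lambda_{m,k} < \lambda_{m,1} < 0$ for all $k = 2,\ldots,\lfloor n/2\rfloor$ (the strict ordering stated in Section \ref{sec:prelim}, using $\lambda_{m,k}=\lambda_{m,n-k}$), every exponent $\lambda_{m,k}-\lambda_{m,1}$ with $k\neq 1, n-1$ is strictly negative while the $k=1$ and $k=n-1$ terms survive. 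Thus $\widehat X(t) \to a_1(0)P_1 + a_{n-1}(0)P_{n-1}$, which by Proposition \ref{prop:selfsimilar} is precisely an affine transformation of the regular $n$-gon $P_1$ (a genuine affine image — a point, segment, or nondegenerate affine copy — depending on whether $a_1(0), a_{n-1}(0)$ vanish). I would phrase the conclusion as: provided $(a_1(0),a_{n-1}(0))\neq(0,0)$, the rescaled flow converges to this affine image; the degenerate case where these coefficients both vanish is lower-dimensional and one instead rescales by the next surviving eigenvalue. The main obstacle is bookkeeping rather than conceptual: one must handle the conjugate-pairing $\lambda_{m,k}=\lambda_{m,n-k}$ carefully (both $P_k$ and $P_{n-k}$ share a decay rate, so the dominant surviving subspace is two-dimensional over $\mathbb{C}$), keep the reality of the $\mathbb{R}^2$ polygon consistent throughout (the coefficients satisfy the conjugate symmetry inherited from $X^0$ being real), and state cleanly what "affine transformation of a regular polygon" means when the limiting combination $a_1(0)P_1 + a_{n-1}(0)P_{n-1}$ degenerates.
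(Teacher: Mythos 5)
Your proposal is correct and follows essentially the same route as the paper's proof: expansion of $X(t)$ in the eigenbasis $\{P_k\}$, reduction to the decoupled scalar ODEs $a_k'=\lambda_{m,k}a_k$, convergence to the centroid since $\lambda_{m,k}<0$ for $k\geq 1$, and the rescaling $e^{-\lambda_{m,1}t}(X(t)-\tilde a_0(0))$ yielding the limit $a_1(0)P_1+a_{n-1}(0)P_{n-1}$, with the degenerate case handled by passing to the next dominant eigenvalue exactly as in the paper. Your additional remarks on the explicit eigenvalue formula and conjugate symmetry are consistent with, but not needed beyond, the paper's argument.
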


\begin{proof}
The proof is similar to the corresponding result in  \cite{chow2007semidiscrete} for $m=1$. 

 The set of eigenvectors $\{P_k\}_{k=0}^{n-1}$ of $(-1)^{m+1}M^m$ forms a basis of $\mathbb{C}^n$. Therefore, by considering $X(t) \in \mathbb{C}^n$ we can write our polygon in the form 
\begin{equation} \label{E:Xform}
  X(t) = \sum_{k=0}^{n-1}a_k(t)P_k,
  \end{equation}
  where the coefficients $a_k(t)$ are complex.  We have 
\begin{align*}
    \frac{dX}{dt} & = \sum_{k=0}^{n-1}\frac{da_k}{dt}P_k,
\end{align*}
and since $X(t)$ satisfies \eqref{eqn:polyflow}, this gives
\begin{equation*}
    \sum_{k=0}^{n-1}\frac{da_k}{dt}P_k  = (-1)^{m+1}M^mX(t)
     = (-1)^{m+1}M^m\sum_{k=0}^{n-1}a_k(t)P_k
     = \sum_{k=0}^{n-1}a_k(t)\lambda_{m,k}P_k.
\end{equation*}

Therefore, 
$$\frac{da_k}{dt} = \lambda_{m,k}a_k(t),$$ with solution
\begin{equation} \label{E:deexp}
  a_k(t) = a_k(0)e^{\lambda_{m,k}t}.
\end{equation}

The value $a_0(0)$ is the (scalar) projection of the initial polygon $X(0)$ onto the eigenvector $P_0.$ That is,
\begin{equation*}
    a_0(0)  = \frac{\left<X(0), P_0\right>}{\left< P_0, P_0\right>}\\
         = \frac{1}{n}\sum_{j=0}^{n-1}X_j(0);
\end{equation*}
this is the centre of mass of the polygon. 

From \eqref{eqn:eigenvalues}, we have $\lambda_{m,0} = 0$. We obtain \eqref{E:flowsoln} by resubstituting the coefficients \eqref{E:deexp} into \eqref{E:Xform}.
    Since all other eigenvalues are strictly negative, 
    $$\lim_{t\to \infty}X(t) = a_0(0)P_0 = (a_0(0), \ldots, a_0(0))^{T}.$$ Thus all the vertices converge to the complex value $a_0(0)$ indicating that the evolving polygon shrinks to a point, specifically its centre of mass. 

To ascertain the limiting shape of the polygon $X(t)$, we consider an appropriate rescaling and translation. Specifically, we scale by factor $e^{-\lambda_{m,1}t}$ and translate by $-\tilde{a}_0(0)$ to consider 
\begin{equation}\label{eqn:rescaledpolygon}
    Y(t) = e^{-\lambda_{m,1}t}(X(t) - \tilde{a}_0(0)).
\end{equation}
We therefore have
\begin{equation*}
Y(t) =  e^{-\lambda_{m,1}t}\left(\sum_{k=0}^{n-1}a_k(0)e^{\lambda_{m,k}t}P_k - \tilde{a}_0(0)\right)\\
     =  a_1(0)P_1 + a_{n-1}(0)P_{n-1} + \sum_{k=2}^{n-2}a_k(0)e^{-(\lambda_{m,1} - \lambda_{m,k})t}P_k.\\
\end{equation*}

The eigenvalue $\lambda_{m,1}$ is chosen within the scaling factor as it is dominant such that $\lambda_{m,1} - \lambda_{m,k} > 0 $ for all $k = 2,\ldots, \lfloor \frac{n}{2} \rfloor,$  and so 
$$\lim_{t\to\infty}Y(t) = a_1(0)P_1 + a_{n-1}(0)P_{n-1}.$$
Therefore $Y$ converges to an affine transformation of $P_1.$ 

If $X(0)$ is orthogonal to $P_1$ and $P_{n-1},$ such that $a_1(0) = a_{n-1}(0) = 0,$ then for any remaining eigenvectors the initial polygon is not orthogonal to, we consider the next dominant eigenvalue $\lambda_{m,k},$ $k\in \{2,\ldots, \lfloor \frac{n}{2} \rfloor\}.$ We follow the same process, taking $Y(t) = e^{-\lambda_{m,k}t}(X - \tilde{a}_0(0))$ such that $\lim_{t\to\infty}Y(t) = a_k(0)P_k + a_{n-k}(0)P_{n-k},$ the affine transformation of $P_k.$
\end{proof}

Figures \ref{fig:general_n=5} and \ref{fig:general_n=6} depict the evolution of a pentagon and hexagon under the semi-discrete polyharmonic flow at a series of time steps for select values of $m$, respectively. They demonstrate the behaviour of the flow as the polygon shrinks and converges to an affine transformation of the regular polygon. The convergence is faster for higher $m$, as expected in view of \eqref{E:flowsoln}. In the case of $n=6$, the dominant eigenvalue is $\lambda_{m,1} = -1$ for all $m$. So while higher values of $m$ do not give faster shrinking to a point, there is faster convergence to the affine transformation of a regular polygon as the terms involving non-dominant eigenvalues tend to zero at a faster rate. This is illustrated in Figure \ref{fig:general_n=6}.

\begin{figure*}

  \begin{subfigure}[b]{0.31\textwidth}
    \includegraphics[width=\textwidth]{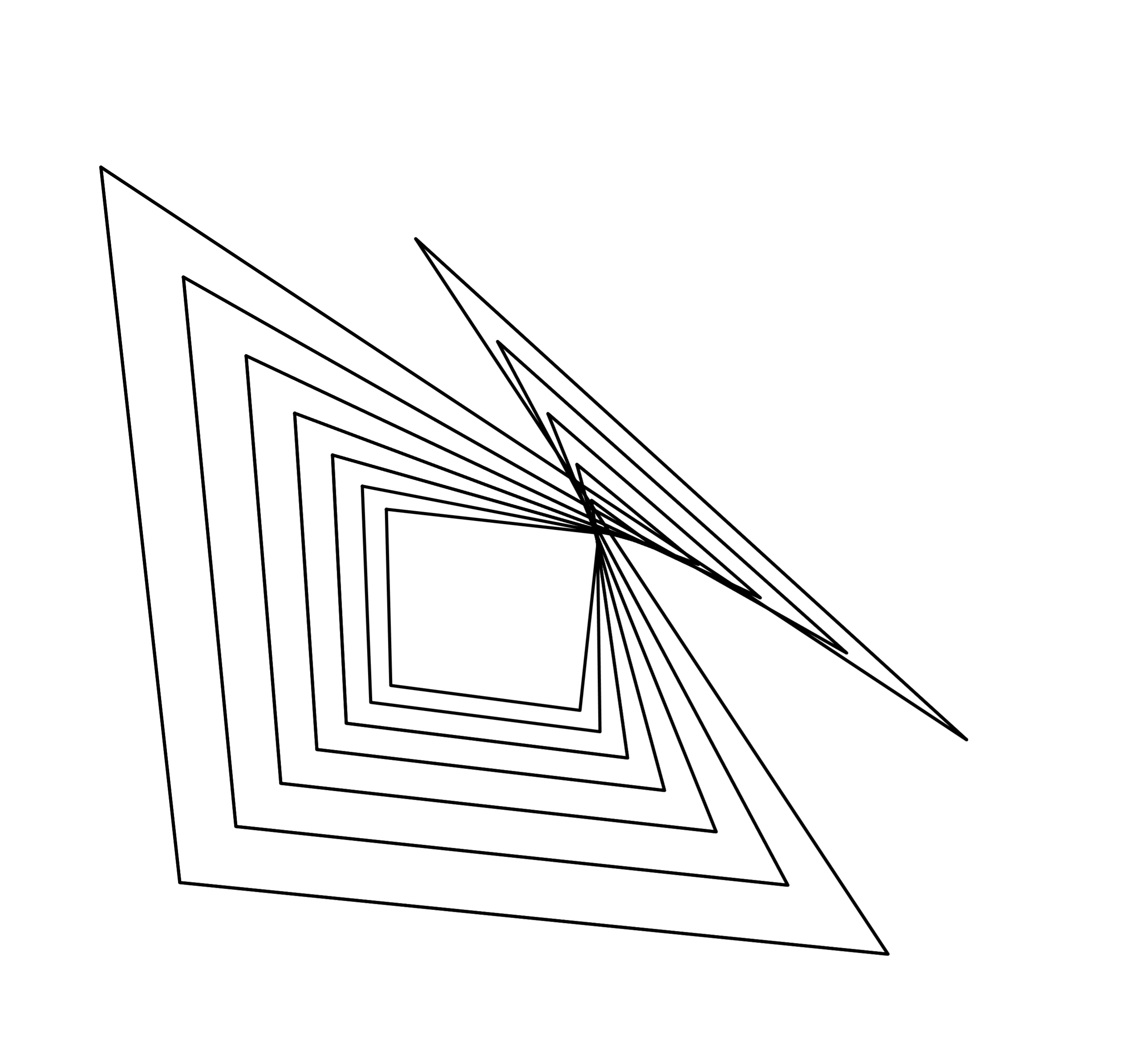}
    \caption{$m=1$}
    \label{fig:general_n=5_m=1}
  \end{subfigure}
  \hfill
  \begin{subfigure}[b]{0.31\textwidth}
    \includegraphics[width=\textwidth]{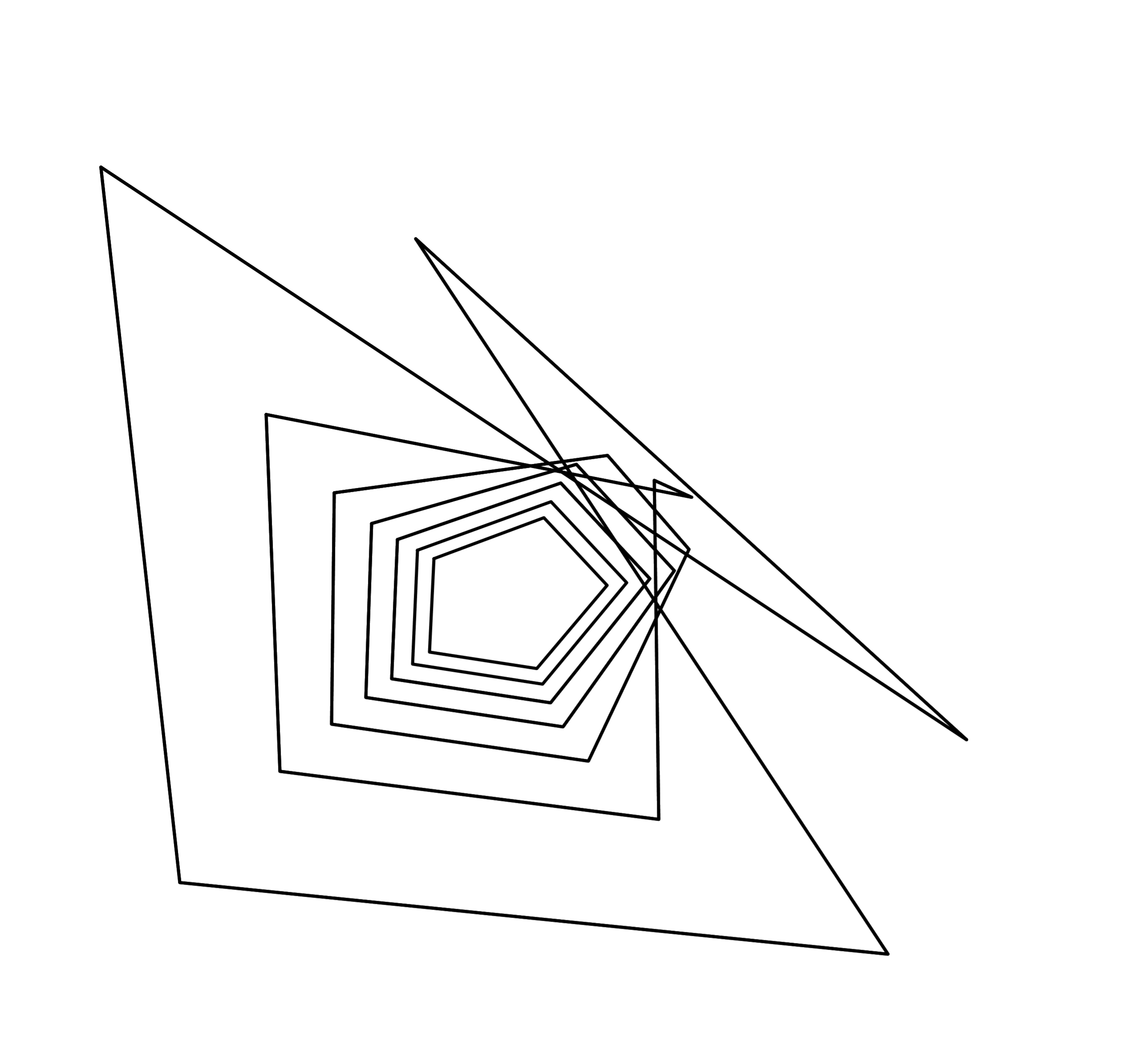}
    \caption{$m=2$}
    \label{fig:general_n=5_m=2}
  \end{subfigure}
  \hfill
   \begin{subfigure}[b]{0.31\textwidth}
    \includegraphics[width=\textwidth]{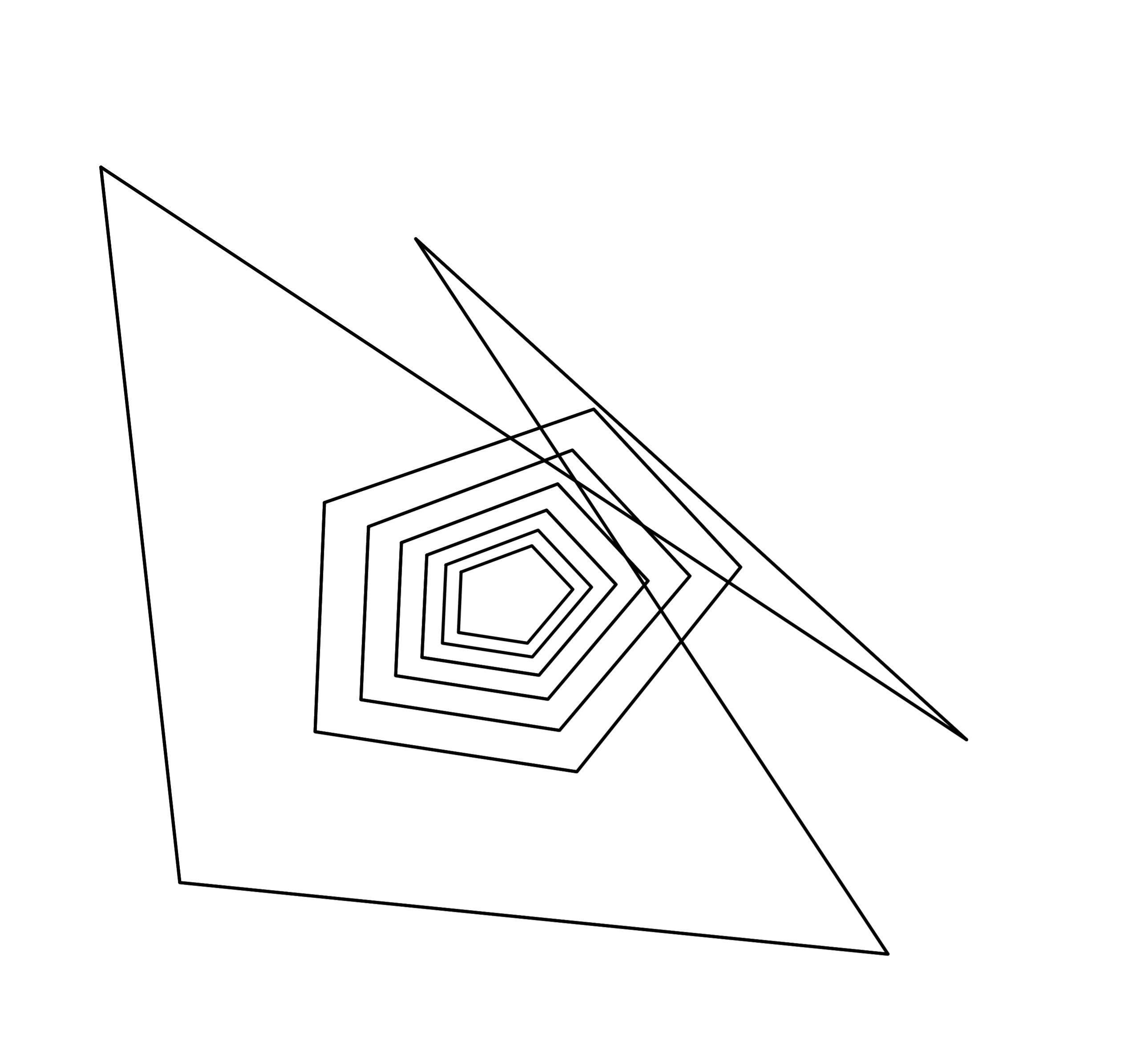}
    \caption{$m=3$}
    \label{fig:general_n=5_m=3}
  \end{subfigure}
 
  \caption{Evolution of a pentagon under the semi-discrete polyharmonic flow for different values of $m.$ Distinct time steps of the evolution are shown superimposed over the initial polygon. The same time step values are used for each case of $m$.} \label{fig:general_n=5}
\end{figure*}

\begin{figure*}

  \begin{subfigure}[b]{0.31\textwidth}
    \includegraphics[width=\textwidth]{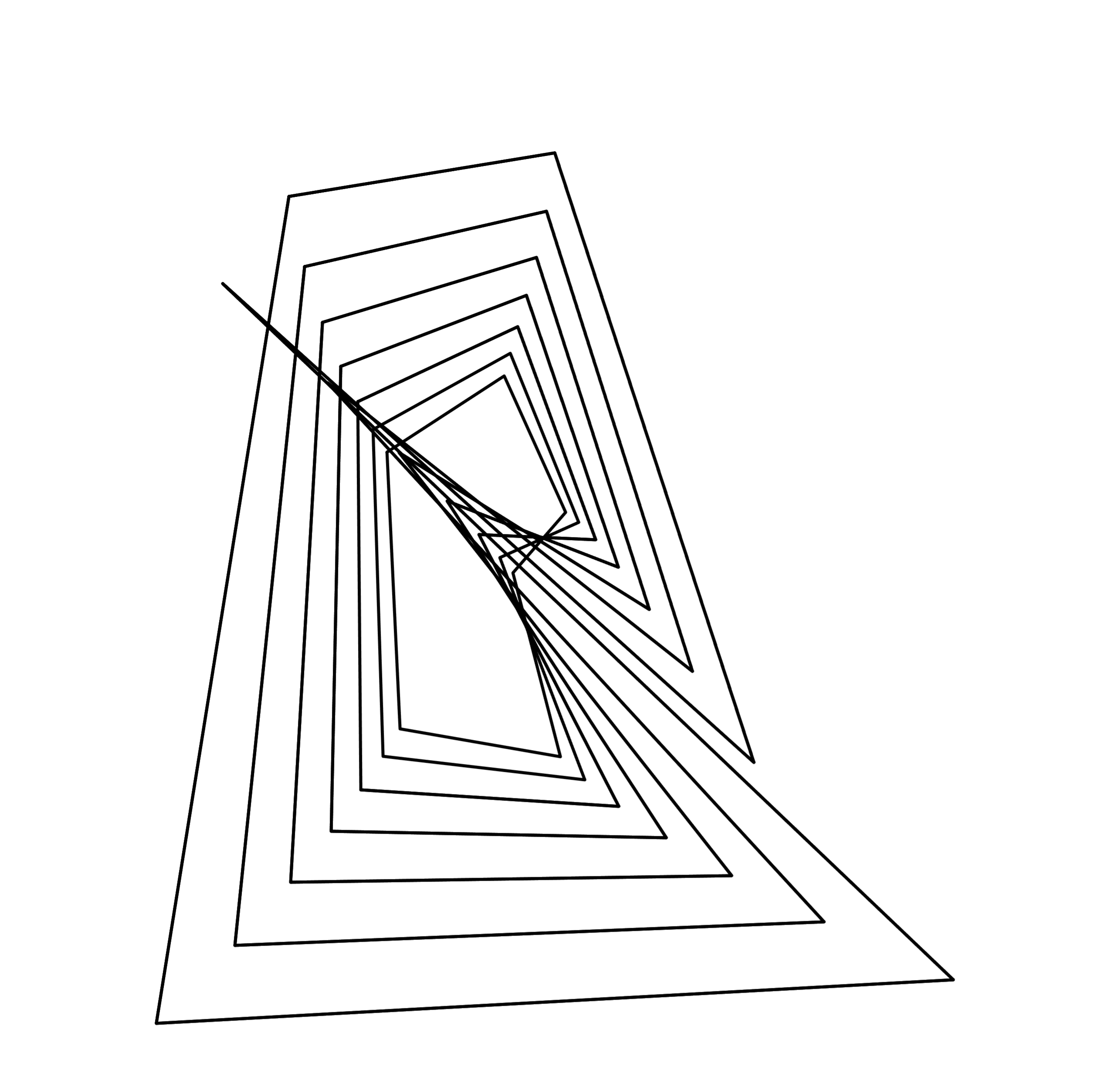}
    \caption{$m=1$}
    \label{fig:general_n=6_m=1}
  \end{subfigure}
  \hfill
  \begin{subfigure}[b]{0.31\textwidth}
    \includegraphics[width=\textwidth]{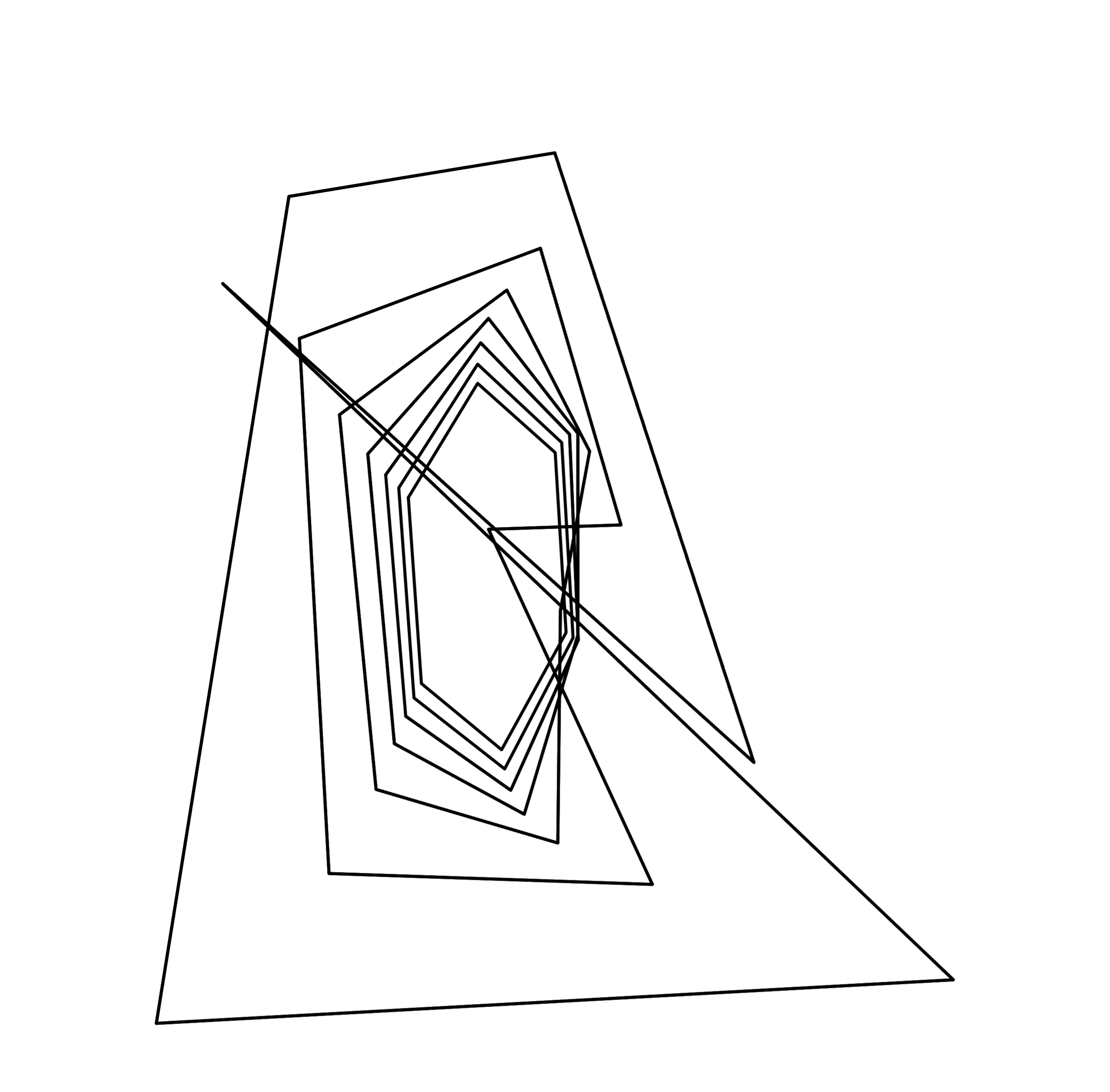}
    \caption{$m=2$}
    \label{fig:general_n=6_m=2}
  \end{subfigure}
  \hfill
   \begin{subfigure}[b]{0.31\textwidth}
    \includegraphics[width=\textwidth]{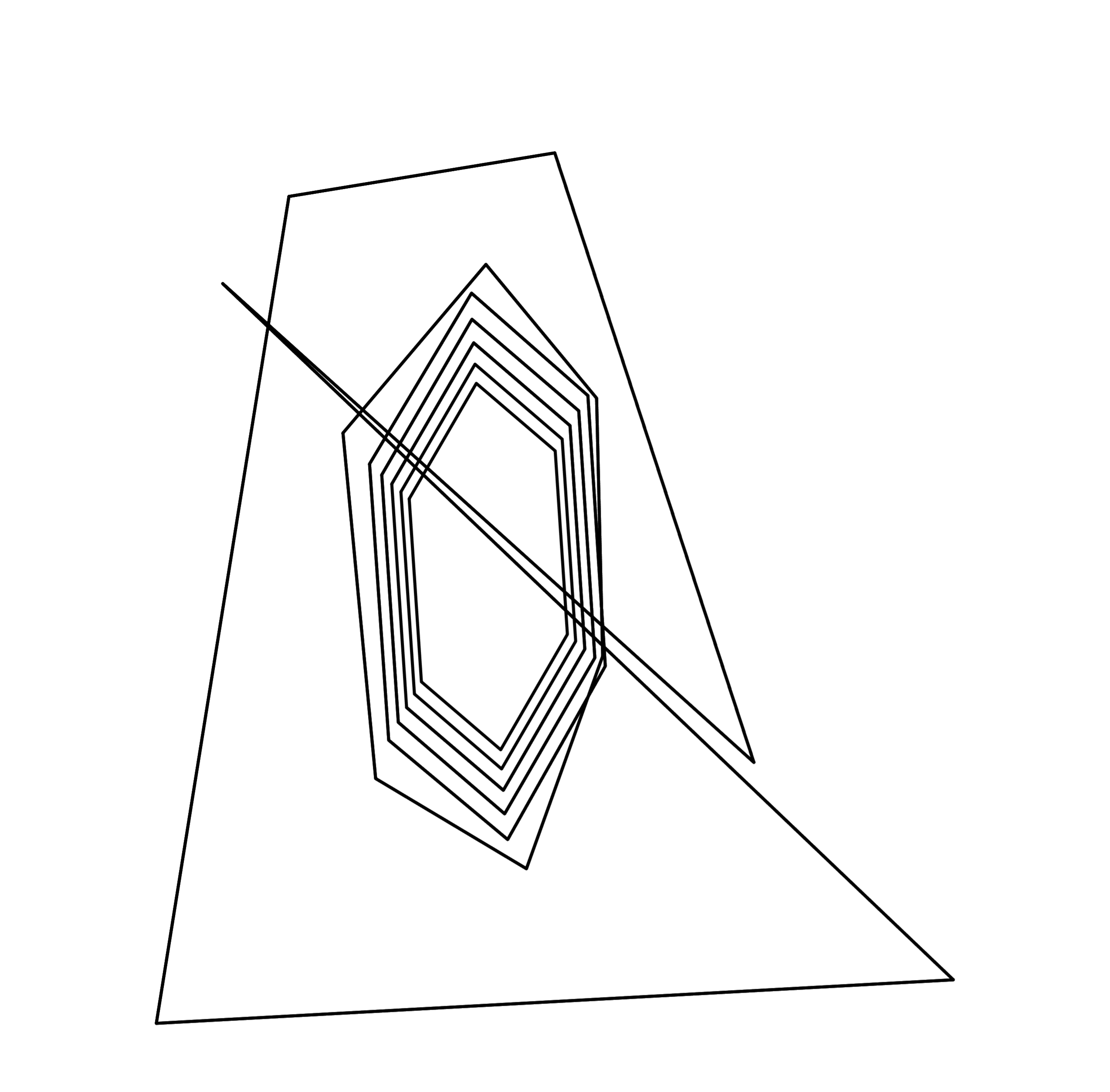}
    \caption{$m=3$}
    \label{fig:general_n=6_m=3}
  \end{subfigure}
 
  \caption{Evolution of a hexagon under the semi-discrete polyharmonic flow for different values of $m.$ Distinct time steps of the evolution are shown superimposed over the initial polygon. The same time step values are used for each case of $m$.} \label{fig:general_n=6}
\end{figure*}

\section{Solutions in higher codimension}\label{sec:higher_codimension}
To consider the flow in higher codimension, we set up similarly as in \cite{chow2007semidiscrete}.  Let each vertex $X_j \in \mathbb{R}^p$ be denoted as $X_j = (x_{1j}, x_{2j},\ldots, x_{pj}).$ Considering the $i$th coordinate for each vertex in the polygon, we define
$$\vec{x}_i = (x_{i0}, x_{i1}, \ldots, x_{i,n-1})^T,$$ which is a vector in $\mathbb{R}^n.$
Therefore,
\begin{equation}\label{eqn:polygon_coordinates}
    X = (\vec{x}_1\  \cdots\  \vec{x}_p).
\end{equation}

For $k = 0, 1, \ldots, \lfloor \frac{n}{2}\rfloor,$ let us define the following vectors in $\mathbb{R}^n$:
\begin{equation}\label{eqn:ck}
c_k  = \left(1, \cos\left(\frac{2\pi k}{n}\right), \cos\left(\frac{4\pi k}{n}\right), \ldots, \cos\left(\frac{2(n-1)\pi k}{n} \right)\right)^T,
\end{equation}
\begin{equation}\label{eqn:sk}
s_k  = \left(0, \sin\left(\frac{2\pi k}{n}\right), \sin\left(\frac{2\pi k}{n}\right), \ldots, \sin\left(\frac{2(n-1)\pi k}{n} \right)\right)^T \mbox{.}
\end{equation}
The vectors $c_k$ and $s_k$ are the real and imaginary parts of the eigenvector $P_k$, respectively. Thinking of each entry of $P_k$ expressed as a vector in $\mathbb{R}^2,$ we have $P_k = (c_k\  s_k).$ Furthermore, nonzero elements from the set $\{c_k, s_k\}_{k=0, 1, \ldots, \lfloor \frac{n}{2} \rfloor}$ are mutually orthogonal and form a basis of $\mathbb{R}^n.$

Therefore, each $\vec{x}_i$ for $i=1,\ldots, p$ can be expressed as 
\begin{equation*}
    \vec{x}_i = \sum_{k=0}^{\lfloor \frac{n}{2} \rfloor}(\alpha_{ik}c_k + \beta_{ik}s_k),
\end{equation*}
where the coefficients $\alpha_{ik}$ and $\beta_{ik}$ are real numbers.

\subsection{Self-similar solutions in higher codimension}

\begin{prop}
   If a family $X(t)$ of polygons with $n$ vertices in $\mathbb{R}^p$ is a self-similar scaling solution to the flow \eqref{eqn:polyflow}, then $X(t)$ has the form 
   \begin{equation}\label{eqn:selfsim_highcodim_scale}
       X(t) = e^{\lambda_{m,k}t}(c_k\ s_k)
    \begin{bmatrix} 
    \alpha_{1} & \cdots & \alpha_{p}\\
    \beta_{1} & \cdots & \beta_{p}\\
    \end{bmatrix}\\
   \end{equation}
   for any particular $k \in \{1, 2, \ldots, \lfloor \frac{n}{2} \rfloor\}$ and real constants $\alpha_j$ and $\beta_j$ for $j = 1,\ldots, p.$  That is, $X(t)$ is the image of a linear transformation of a regular polygon $P_k = (c_k\  s_k)$ with $n$ vertices in $\mathbb{R}^2,$ with linear transformation $T\colon\mathbb{R}^2 \to \mathbb{R}^p$ given by
   \begin{equation}\label{eqn:linear_transform}
T(x,y) = (x \ \ y)\begin{bmatrix} 
    \alpha_1 & \cdots & \alpha_p\\
    \beta_1 & \cdots & \beta_p\\
    \end{bmatrix}.
\end{equation} 
\end{prop}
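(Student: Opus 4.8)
The plan is to follow the proof of Proposition~\ref{prop:selfsimilar} almost verbatim, the only structural difference being that $X^{0}$ is now an $n\times p$ matrix, so the argument must be run column by column over $\mathbb{R}$ rather than by identifying the plane with $\mathbb{C}$. First I would substitute the scaling ansatz $X(t)=g(t)X^{0}$ into \eqref{eqn:polyflow}, obtaining $g'(t)X^{0}=(-1)^{m+1}M^{m}g(t)X^{0}$ and hence $\tfrac{g'(t)}{g(t)}X^{0}=(-1)^{m+1}M^{m}X^{0}$ for all $t$. Evaluating at $t=0$ with $g(0)=1$ and writing $a=g'(0)$ gives $aX^{0}=(-1)^{m+1}M^{m}X^{0}$; since the right-hand side does not depend on $t$, the scalar relation $g'/g\equiv a$ must hold, so $g(t)=e^{at}$.

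Next I would read off the structural consequence $\bigl[(-1)^{m+1}M^{m}-aI_{n}\bigr]X^{0}=0_{n\times p}$, which says that every column $\vec{x}_{i}$ of $X^{0}$ lies in the kernel of $(-1)^{m+1}M^{m}-aI_{n}$. A nontrivial $X^{0}$ therefore forces this kernel to be nontrivial, so $a=\lambda_{m,k}$ for some $k\in\{0,1,\ldots,\lfloor n/2\rfloor\}$, the distinct eigenvalues of the real symmetric circulant matrix $(-1)^{m+1}M^{m}$ recorded in Section~\ref{S:background}. Fixing $k\in\{1,\ldots,\lfloor n/2\rfloor\}$, the real eigenspace associated with $\lambda_{m,k}$ is spanned by $c_{k}$ and $s_{k}$ — the real and imaginary parts of $P_{k}$, which are mutually orthogonal and already known to sit inside a basis of $\mathbb{R}^{n}$. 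Hence each $\vec{x}_{i}=\alpha_{i}c_{k}+\beta_{i}s_{k}$ for real constants $\alpha_{i},\beta_{i}$, and reassembling the columns as in \eqref{eqn:polygon_coordinates} shows that $X^{0}$ has exactly the form \eqref{eqn:selfsim_highcodim_scale} at $t=0$; multiplying by $g(t)=e^{\lambda_{m,k}t}$ gives the stated formula. Recognising the right-hand factor as the matrix of the map $T$ in \eqref{eqn:linear_transform} applied to $P_{k}=(c_{k}\ s_{k})$ then yields the geometric statement that $X(t)$ is the image of $P_{k}$ under a (time-scaled) linear transformation $\mathbb{R}^{2}\to\mathbb{R}^{p}$.

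Finally I would treat the degenerate eigenvalues exactly as in the planar case: $a=\lambda_{m,0}=0$ gives $g\equiv 1$ with eigenspace $\operatorname{span}\{c_{0}\}=\operatorname{span}\{\tilde 1\}$, so $X^{0}$ is a single point repeated $n$ times (cf.\ Lemma~\ref{lem:nullspace_matrix}); and for $n$ even, $a=\lambda_{m,n/2}$ gives $s_{n/2}=0$, so the eigenspace collapses to $\operatorname{span}\{c_{n/2}\}$ and the image degenerates to a line segment. I do not anticipate a real obstacle: the whole argument is a direct adaptation of Proposition~\ref{prop:selfsimilar}, and the only point requiring a little care is keeping the bookkeeping straight when passing from the single vector equation of the planar case to the column-wise matrix equation here, together with noting which values of $k$ give two-dimensional versus one-dimensional eigenspaces.
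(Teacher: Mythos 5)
Your proposal is correct and takes essentially the same approach as the paper: substitute the scaling ansatz to obtain $g(t)=e^{at}$, force $a=\lambda_{m,k}$ through the kernel condition $\left[(-1)^{m+1}M^m-aI_n\right]X^0=0_{n\times p}$, and identify the corresponding (real) eigenspace spanned by $c_k$ and $s_k$ so that each coordinate column of $X^0$ is a real combination of these, which reassembles into the stated linear-transformation form, with the degenerate cases $k=0$ and $k=n/2$ treated as in the planar proposition. The only cosmetic difference is that you argue column-by-column over $\mathbb{R}$ where the paper phrases the kernel in terms of $P_k$ and $P_{n-k}$ before passing to $(c_k\ s_k)$; the content is identical.
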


\begin{proof}
    Suppose $X(t) = g(t)X^0$ for a differentiable scaling function $g\colon \mathbb{R} \rightarrow \mathbb{R}.$ Following the same process as in the proof of Proposition \ref{prop:selfsimilar}, we find $g(t) = e^{at}$ where $a = g'(0).$ Furthermore, we solve 
\begin{equation}\label{eqn:selfsim_highcodim_asolve}
    \left[(-1)^{m+1}M^m - a I_n\right]X^0 = 0_{n\times p}.
\end{equation}

To have $\det\left((-1)^{m+1}M^m - a I_n\right) =0$, we require $a=\lambda_{m,k}$ for some $k \in  \{0,1,\ldots, n-1\}$.  This results in the scaling factor $g(t) = e^{\lambda_{m,k}t}.$  For $a=\lambda_{m,k},$ the null space of $(-1)^{m+1}M^m - a I_n$ is spanned by $P_k$ and $P_{n-k}.$ By Proposition \ref{prop:selfsimilar}, each $P_k$ is self-similar in the plane with scaling factor $e^{\lambda_{m,k}t}.$ This gives $\left((-1)^{m+1}M^m - \lambda_{m,k} I_n\right)P_k = 0_{n\times 2}.$ We write $P_k=(c_k\ s_k)$ and $P_{n-k}=(c_k\ -s_k),$ noting $s_{0} = \tilde{0}$ and $s_{\frac{n}{2}} = \tilde{0}$ for $n$ even.  
We can apply a linear transformation $T$ to each vertex of a polygon in $\mathbb{R}^2,$ as given in \eqref{eqn:linear_transform}, to produce a polygon in $\mathbb{R}^p.$ Therefore
\begin{equation*}
    X^0  = (c_k\ s_k)
            \begin{bmatrix} 
            \alpha_{1} & \cdots & \alpha_{p} \\
            \beta_{1} & \cdots & \beta_{p}\\
            \end{bmatrix}
\end{equation*}
solves \eqref{eqn:selfsim_highcodim_asolve}, where $\alpha_{i}$ and $\beta_{i}$ for all $i = 1,\ldots, p$ are any real coefficients. 
If $a=\lambda_{k,0}=0,$ then $g\equiv 1$ and $X(t) = X^0 = c_0(a_1\ \cdots\  a_p) = (\tilde a_1\ \cdots\ \tilde a_p)$ for real constants $a_1,\ldots, a_p.$ 
\end{proof}

\begin{prop}\label{lem:selfsim_rotate_highcodim}
    Consider the family $X(t)$ of polygons in $\mathbb{R}^p$ such that \begin{equation}\label{eqn:rotateselfsim_highcodim}
    X(t) = X^0R(t),
\end{equation}
where $R\colon \mathbb{R} \rightarrow \mathrm{SO}(p)$ represents the $p\times p$ rotation of the polygon $X^0$ in $\mathbb{R}^p$ such that $R(0) = I_p.$ 

If $X(t)$ satisfies \eqref{eqn:polyflow} for all $t$, then $R \equiv I_p$ and $X^0$ corresponds to a point in $\mathbb{R}^p.$ 
That is, there are no nontrivial self-similar solutions that move by pure by rotation under the semi-discrete polyharmonic flow.
\end{prop}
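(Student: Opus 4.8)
The plan is to mirror the planar argument of Proposition~\ref{prop:selfsim_rotate_planar}, with the scalar rotation rate $b$ there replaced by the skew-symmetric infinitesimal generator $A := R'(0)$ of the rotation path. First I would differentiate $R(t)R(t)^T = I_p$ at $t=0$ to record that $A + A^T = 0$. Substituting $X(t) = X^0R(t)$ into \eqref{eqn:polyflow} gives $X^0R'(t) = (-1)^{m+1}M^mX^0R(t)$, and evaluating at $t=0$, where $R(0)=I_p$, yields the algebraic identity
\[
  X^0A = (-1)^{m+1}M^mX^0,
\]
the higher-codimension counterpart of \eqref{eqn:rotate_expression}.

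Write $L := (-1)^{m+1}M^m$, so the identity reads $LX^0 = X^0A$; applying $L$ once more gives $L^2X^0 = L(X^0A) = (LX^0)A = X^0A^2$, that is,
\[
  M^{2m}X^0 = X^0A^2 .
\]
Now I would compare the spectra of the two sides. By \eqref{E:evalue} the eigenvalues of $M$ are $\lambda_k = -4\sin^2(\pi k/n) \le 0$, so $M^{2m}$ is symmetric and positive semi-definite, with kernel spanned by $\tilde 1$ (Lemma~\ref{lem:nullspace_matrix}); on the other hand, $A$ real skew-symmetric forces $A^2$ symmetric and negative semi-definite, since $\langle A^2v, v\rangle = -|Av|^2 \le 0$. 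Diagonalise $A^2 = QDQ^T$ with $Q$ orthogonal and $D = \mathrm{diag}(-\mu_1^2, \ldots, -\mu_p^2)$, $\mu_j \ge 0$; this is an orthogonal change only of the ambient $\mathbb{R}^p$-coordinates, which leaves the left action of $M^{2m}$ on the vertex-index factor $\mathbb{R}^n$ untouched. Setting $Z = X^0Q$, the identity becomes $M^{2m}Z = ZD$, i.e.\ $M^{2m}z_j = -\mu_j^2 z_j$ for each column $z_j$. Matching a nonnegative eigenvalue of $M^{2m}$ to the nonpositive number $-\mu_j^2$ forces $\mu_j = 0$ and $z_j$ to lie in the kernel of $M^{2m}$, hence in the span of $\tilde 1$, for every $j$. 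Therefore $A^2 = 0$, so $A = R'(0) = 0$, and every column of $Z$, hence of $X^0 = ZQ^T$, is a constant vector, so $X^0$ has all of its vertices equal and is a single point of $\mathbb{R}^p$. Finally, with $X^0 = \tilde 1\,a^T$ for some $a \in \mathbb{R}^p$, Lemma~\ref{lem:matrix_properties} gives $M^mX^0 = 0$, whence $X^0R'(t) = (-1)^{m+1}M^mX^0R(t) = 0$ and $X(t) \equiv X^0$; the evolution is stationary and one may take $R \equiv I_p$, so the solution is trivial.

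I expect the spectral comparison in $M^{2m}X^0 = X^0A^2$ to be the only genuinely delicate point: the circulant $M^{2m}$ acts on the left (on the $\mathbb{R}^n$ vertex-index factor) while $A^2$ acts on the right (on the $\mathbb{R}^p$ ambient factor), and these two commuting actions must be disentangled. Diagonalising $A^2$ by an orthogonal change of the $\mathbb{R}^p$-coordinates decouples them and reduces everything to the columnwise eigenvalue mismatch ``a number $\ge 0$ equals a number $\le 0$''. Everything else --- the skew-symmetry of $A$, the identity $L^2 = M^{2m}$, and the description of $\ker M^{2m}$ --- is immediate from material already in hand, so the new content relative to the planar case is precisely the observation that an infinitesimal rotation generator has $A^2$ negative semi-definite, which is incompatible with the nonnegative spectrum of $M^{2m}$ unless the polygon degenerates to a point.
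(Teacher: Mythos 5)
Your proposal is correct, and it reaches the conclusion by a genuinely different route from the paper. The paper writes $R(t)=\exp(S(t))$, argues that $X^0\frac{dS}{dt}$ must equal the constant matrix $(-1)^{m+1}M^mX^0$, and then decomposes the motion into coordinate-plane rotations $R_{ij}(f_{ij}(t))$: for each pair $(i,j)$ it extracts the relations $b\,\vec{x}_j=(-1)^{m+1}M^m\vec{x}_i$, $-b\,\vec{x}_i=(-1)^{m+1}M^m\vec{x}_j$, $(-1)^{m+1}M^m\vec{x}_k=\tilde 0$ for $k\neq i,j$, and then runs the planar argument of Proposition~\ref{prop:selfsim_rotate_planar} pair by pair to force $b=0$, before reassembling the general rotation from these planar pieces. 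You instead work directly with the skew-symmetric generator $A=R'(0)$: evaluating the flow at $t=0$ gives $X^0A=(-1)^{m+1}M^mX^0$, squaring gives $M^{2m}X^0=X^0A^2$, and an orthogonal diagonalisation of the negative semi-definite $A^2$ decouples the left (vertex-index) and right (ambient) actions, so each column of $X^0Q$ is simultaneously an eigenvector of the positive semi-definite $M^{2m}$ with a nonpositive eigenvalue, hence lies in $\ker M^{2m}=\operatorname{span}\{\tilde 1\}$ by Lemma~\ref{lem:nullspace_matrix}; stationarity then follows from Lemma~\ref{lem:matrix_properties} and the flow equation for all $t$. Both arguments ultimately rest on the same sign mismatch between the spectrum of $M^{2m}$ and the square of a rotation generator, but your version avoids the paper's plane-by-plane decomposition of a general $\mathrm{SO}(p)$ path (and its reliance on $\frac{dS}{dt}$ being constant and on $\frac{dR}{dt}=\frac{dS}{dt}R$), which makes it arguably cleaner and more uniform in $p$. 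One small caution: the step ``therefore $A^2=0$, so $A=0$'' only follows for the directions in which the corresponding column of $X^0Q$ is nonzero; if a column vanishes (e.g.\ $X^0$ a point, possibly at the origin) the generator may act nontrivially on directions the polygon does not occupy. This does not affect your main conclusions — $X^0$ is a point and $X(t)\equiv X^0$, which is exactly how you close the argument — and the paper's own statement ``$R\equiv I_p$'' carries the same caveat, which it also only addresses informally.
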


\begin{proof}
  To assist in the formulation of the rotation matrix as well as its derivative, we also consider the skew-symmetric matrix $S\colon \mathbb{R} \rightarrow \mathfrak{so}(p)$ such that for the exponential map, $\exp\colon\mathfrak{so}(p) \rightarrow \mathrm{SO}(p),$ from the set of skew-symmetric $p\times p$ matrices to the set of $p\times p$ rotation matrices, we have $\exp(S(t)) = R(t).$ Furthermore, the derivative of the rotation matrix is given by
    \begin{equation}\label{eqn:rotation_deriv_highcodim}
        \frac{dR}{dt}= \frac{dS}{dt}R(t).
    \end{equation}
    
Since $X(t) = X^0R(t)$ is to satisfy \eqref{eqn:polyflow}, we have
\begin{equation*}
    X^0\frac{dR}{dt} = (-1)^{m+1}M^mX^0R(t),
\end{equation*}
and from the derivative expression for $R,$ we find
\begin{equation*}
     X^0\left[\frac{dR}{dt}\right]R^{-1}(t) = X^0\frac{dS}{dt} = (-1)^{m+1}M^mX^0.
\end{equation*}

Since the above is to be true for all $t$ then $\frac{dS}{dt}$ is be a constant matrix denoted $B,$ and 
\begin{equation}\label{eqn:rotation_highcodim_scalar}
X^0B  = (-1)^{m+1}M^mX^0.
\end{equation}

To find matrix $B$ and, therefore, the general rotation matrix, we first consider rotations in each $x_ix_j$-plane for $i,j \in \{1,\ldots, p\}$ and $i\neq j,$ where the remaining $p-2$ axes are invariant. We denote such a rotation as $R_{ij}(f_{ij}(t))$ where the differentiable function $f_{ij}\colon\mathbb{R} \rightarrow \mathbb{R}$ is the angle of rotation at time $t$ in the $x_ix_j$-plane and where $f_{ij}(0) = 0.$ Furthermore, $R_{ij}(f_{ij}(t)) = \exp(S_{ij}(f_{ij}(t)))$ for a skew-symmetric matrix $S_{ij}(f_{ij}(t)).$  This matrix $S_{ij}(f_{ij}(t))= [s_{ab}\colon a,b = 1,\ldots, p]$ is given by $s_{ij} = -s_{ji} = -f_{ij}(t)$, and all other the entries are equal to zero. We can also write $S_{ij}(f_{ij}(t)) = f_{ij}(t)S_{ij}(1).$ Denoting $X^0$ in terms of its coordinate vectors $X^0 = (\vec{x}_1\ \cdots\  \vec{x}_p),$ the product $X^0S_{ij}(1)$ swaps the $i$th coordinate vector $\vec{x}_i$ of $X^0$ to $\vec{x}_j,$ and the $j$th coordinate vector $\vec{x}_j$ to $-\vec{x}_i$ and all remaining coordinate vectors become $\tilde{0}.$

The rotation rate of the polygon in the $x_ix_j$-plane is given by $f_{ij}'(t)$ and $\frac{d}{dt}S_{ij}(f_{ij}(t)) = f'_{ij}(t)S_{ij}(1).$ Furthermore, since $\frac{d}{dt}S_{ij}(f_{ij}(t))$ is a constant matrix $B$, it follows that, for some constant $b$, we have $f_{ij}'(t) = b$ for all $t$. 

From \eqref{eqn:rotation_highcodim_scalar} we therefore have
\begin{equation*}
    b\, \vec{x}_j = (-1)^{m+1}M^m\vec{x}_i,\ -b\, \vec{x}_i = (-1)^{m+1}M^m\vec{x}_j \text{ and } \tilde{0} = (-1)^{m+1}M^m\vec{x}_k \text{ for } k \neq i,j.
\end{equation*}
For $\vec{x}_k$ with $k\neq i,j,$ the above indicates $\vec{x}_k = \tilde a_k$  for some real constant $a_k.$ That is, $\vec{x}_k$ is a constant vector, by Lemma \ref{lem:nullspace_matrix}. If we require $\vec{x}_i$ or $\vec{x}_j$ to be nonzero vectors, then a similar argument as in the proof of Proposition \ref{prop:selfsim_rotate_planar} gives $-b^2\vec{x}_i = M^{2m}\vec{x}_i$ and $-b^2\vec{x}_j = M^{2m}\vec{x}_j$ which leads to $b=0$ and consequently $B=0_{p\times p}.$ Therefore, $R_{ij}(t)$ is a constant matrix. Also $\vec{x}_i$ and $\vec{x}_j$ are constant vectors. 

The general rotation $R(t)$ in $\mathbb{R}^p$ is composed of $R_{ij}(f_{ij}(t))$ rotations and is therefore also constant. Given the initial condition, we have $R \equiv I_p.$ If the rotation $R(t)$ acts on a subspace that the polygon is not in, then $X(t) = X^0R(t) = X^0.$
Therefore, the only higher codimension self-similar solutions by rotations in a plane is the trivial solution $X(t) = X^0$ and for this to satisfy the semi-discrete polyharmonic flow, $X^0 = (\tilde{a}_1\  \cdots\  \tilde{a}_p)$ for real constants $a_i$, $i=1,\ldots, p.$

    \end{proof}

\begin{prop}
    Consider the family of polygons with $n$ vertices in $\mathbb{R}^p,$ $X(t),$ such that \begin{equation}\label{eqn:translateselfsim_highcodim}
    X(t) = X^0 + \mathbf{h}(t),
\end{equation}
where $\mathbf{h}(t)$ is a $n\times p$ matrix that represents the translation of the polygon $X^0$ and $\mathbf{h}(0) = 0_{n\times p}.$ If $X(t)$ satisfies \eqref{eqn:polyflow} for all $t$ then $\mathbf{h}(t) = 0_{n\times p}$ for all $t$ and each vertex of $X^0$ is the same fixed point in $\mathbb{R}^p.$
That is, there are no nontrivial self-similar solutions by translation under the semi-discrete polyharmonic flow.  
\end{prop}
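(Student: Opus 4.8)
The plan is to mirror the proof of Proposition \ref{prop:selfsim_plane_translate}, but working with the coordinate vectors of $X$ rather than with a complex representation, since $\mathbb{R}^p$ carries no convenient complex structure for $p \geq 3$. Because $\mathbf{h}(t)$ translates every vertex of $X^0$ in the same way, each of its columns is a constant-entry vector: write $\mathbf{h}(t) = (\tilde h_1(t)\ \cdots\ \tilde h_p(t))$ with $h_i\colon\mathbb{R}\to\mathbb{R}$ differentiable and $h_i(0)=0$. Substituting \eqref{eqn:translateselfsim_highcodim} into \eqref{eqn:polyflow} and using property 1 of Lemma \ref{lem:matrix_properties} (so that $M^m$ annihilates each column of $\mathbf{h}(t)$), we obtain
\begin{equation*}
\mathbf{h}'(t) = (-1)^{m+1}M^m(X^0 + \mathbf{h}(t)) = (-1)^{m+1}M^m X^0,
\end{equation*}
a \emph{constant} $n\times p$ matrix. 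Since $\mathbf{h}(t)$ has all rows equal, so does $\mathbf{h}'(t)$, and hence so does $(-1)^{m+1}M^m X^0$, i.e.\ so does $M^m X^0$.

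The key step is then to show that an $n\times p$ matrix of the form $M^m X^0$ all of whose rows are equal must be the zero matrix. Denote the columns of $X^0$ by $\vec{x}_1,\ldots,\vec{x}_p \in \mathbb{R}^n$ as in \eqref{eqn:polygon_coordinates}. The "all rows equal" condition says $M^m \vec{x}_i = \tilde c_i$ for some scalar $c_i$, for each $i$. But $M^m$ is symmetric (Lemma \ref{lem:Mpexpression}) and $M^m \tilde 1 = \tilde 0$ (Lemma \ref{lem:matrix_properties}), so $\langle M^m \vec{x}_i, \tilde 1\rangle = \langle \vec{x}_i, M^m \tilde 1\rangle = 0$; since $\langle \tilde c_i, \tilde 1\rangle = n c_i$, this forces $c_i = 0$ and hence $M^m \vec{x}_i = \tilde 0$. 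As $(-1)^{m+1}\neq 0$, Lemma \ref{lem:nullspace_matrix} then gives that each $\vec{x}_i$ is a constant vector $\tilde a_i$, so $X^0 = (\tilde a_1\ \cdots\ \tilde a_p)$; that is, every vertex of $X^0$ equals the single fixed point $(a_1,\ldots,a_p) \in \mathbb{R}^p$.

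Finally, feeding $X^0 = (\tilde a_1\ \cdots\ \tilde a_p)$ back into the displayed equation and using $M^m \tilde 1 = \tilde 0$ once more yields $\mathbf{h}'(t) = 0_{n\times p}$, so $\mathbf{h}$ is constant; since $\mathbf{h}(0) = 0_{n\times p}$ we conclude $\mathbf{h} \equiv 0_{n\times p}$, which is exactly the claim. I do not anticipate a genuine obstacle; the only point needing a little care — and the reason a direct argument reads more cleanly than quoting the planar case verbatim — is the deduction that a constant vector lying in the image of the symmetric, row-sum-zero matrix $M^m$ must vanish. (Equivalently, one could simply apply the argument of Proposition \ref{prop:selfsim_plane_translate} to each coordinate vector $\vec{x}_i$ in turn, the flow being decoupled across coordinates.)
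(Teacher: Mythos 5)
Your proof is correct and follows essentially the same route as the paper, which simply notes that the higher-codimension case ``follows the same process'' as Proposition \ref{prop:selfsim_plane_translate} applied coordinatewise. Your only addition is to make explicit, via the symmetry of $M^m$ and $M^m\tilde 1=\tilde 0$, why a constant vector in the image of $M^m$ must vanish --- a step the paper's planar proof asserts without detail --- so this is a welcome clarification rather than a different argument.
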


\begin{proof}
The proof follows the same process that of Proposition \ref{prop:selfsim_plane_translate}. In the higher codimension case, we find $\mathbf{h}(t) = (\tilde{c}_1\ \cdots\ \tilde{c}_p)$ for real constants $c_i,$  $i = 1,\ldots p.$ However given our initial condition, $c_i=0$ for each $i$.  Therefore $\mathbf{h} \equiv 0_{n\times p}$ and $X(t) = (\tilde{a}_1\ \cdots\  \tilde{a}_p)$ for real constants $a_1,\ldots, a_p$ is the only (trivial) translating self-similar solution. 
      
\end{proof}

\subsection{Higher codimension solutions for general initial data}\label{sec:highcodim_flow_solutions}

A similar result to Theorem \ref{thm:plane} also holds for polygons in higher codimensions. 

\begin{theorem}\label{thm:highcodim}
   Given an initial polygon $X^0$ with $n$ vertices in $\mathbb{R}^p$ and any $m\in \mathbb{N}$, the equation \eqref{eqn:polyflow} with initial data $X\left(0 \right) = X^0$ has a unique solution $X(t)$ for all time that converges exponentially to a point. Under appropriate rescaling, the solution is asymptotic as $t\rightarrow \infty$ to a planar polygon with $n$ vertices in $\mathbb{R}^p$ which is the affine image of a regular polygon in $\mathbb{R}^2.$
\end{theorem}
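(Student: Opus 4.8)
The plan is to mirror the proof of Theorem \ref{thm:plane} coordinate-by-coordinate, using the real Fourier basis $\{c_k, s_k\}$ set up in \eqref{eqn:ck}--\eqref{eqn:sk} in place of the complex eigenvectors $\{P_k\}$. First I would recall from Section \ref{sec:higher_codimension} that writing $X = (\vec x_1\ \cdots\ \vec x_p)$ decouples the flow \eqref{eqn:polyflow} into $p$ independent systems $\frac{d\vec x_i}{dt} = (-1)^{m+1}M^m \vec x_i$ in $\mathbb{R}^n$, one per coordinate, since $M^m$ acts on the left. Since $(-1)^{m+1}M^m$ is real symmetric with the real eigenbasis $\{c_k, s_k\}_{k=0}^{\lfloor n/2\rfloor}$ (the nonzero ones being mutually orthogonal, as noted after \eqref{eqn:sk}) and eigenvalue $\lambda_{m,k}$ attached to both $c_k$ and $s_k$, I would expand $\vec x_i(t) = \sum_k \big(\alpha_{ik}(t) c_k + \beta_{ik}(t) s_k\big)$, substitute into the decoupled ODE, and read off $\alpha_{ik}' = \lambda_{m,k}\alpha_{ik}$, $\beta_{ik}' = \lambda_{m,k}\beta_{ik}$, hence $\alpha_{ik}(t) = \alpha_{ik}(0) e^{\lambda_{m,k}t}$ and likewise for $\beta_{ik}$. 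This gives the explicit closed form
\begin{equation*}
X(t) = c_0\,(\tilde a_1\ \cdots\ \tilde a_p)^{\flat} + \sum_{k=1}^{\lfloor n/2\rfloor} e^{\lambda_{m,k}t}\,(c_k\ s_k)\begin{bmatrix}\alpha_{1k}(0) & \cdots & \alpha_{pk}(0)\\ \beta_{1k}(0) & \cdots & \beta_{pk}(0)\end{bmatrix},
\end{equation*}
where the $k=0$ term is the constant vector recording the centre of mass in each coordinate; uniqueness and global (indeed ancient) existence are immediate from linearity with constant coefficients, as already observed in the excerpt.

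Next I would extract the asymptotics exactly as in Theorem \ref{thm:plane}. Since $\lambda_{m,0}=0$ and $\lambda_{m,k}<0$ for $k\geq 1$ (Lemma \ref{lem:nullspace_matrix} and \eqref{eqn:eigenvalues}), every non-constant mode decays exponentially, so $X(t)\to c_0(\tilde a_1\ \cdots\ \tilde a_p)$, a single point in $\mathbb{R}^p$ (the centre of mass). For the limiting shape I would rescale and translate: set $Y(t) = e^{-\lambda_{m,1}t}\big(X(t) - c_0(\tilde a_1\ \cdots\ \tilde a_p)\big)$. Using $\lambda_{m,1} - \lambda_{m,k} > 0$ for all $k = 2,\ldots,\lfloor n/2\rfloor$ (stated just before Lemma \ref{lem:nullspace_matrix}), all terms with $k\geq 2$ vanish in the limit and
\begin{equation*}
\lim_{t\to\infty} Y(t) = (c_1\ s_1)\begin{bmatrix}\alpha_{11}(0) & \cdots & \alpha_{p1}(0)\\ \beta_{11}(0) & \cdots & \beta_{p1}(0)\end{bmatrix},
\end{equation*}
which by \eqref{eqn:linear_transform} is precisely the image under the linear map $T:\mathbb{R}^2\to\mathbb{R}^p$ of the regular polygon $P_1=(c_1\ s_1)$; in particular it is planar (contained in the at-most-two-dimensional range of $T$). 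If $\vec x_i(0)$ is orthogonal to both $c_1$ and $s_1$ for every $i$, i.e.\ $\alpha_{i1}(0)=\beta_{i1}(0)=0$ for all $i$, I would instead rescale by $e^{-\lambda_{m,k}t}$ for the smallest $k$ with a nonzero coefficient and obtain the affine image of $P_k$, exactly as in the planar case.

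The only genuine point requiring care, and the one I would spend the most words on, is the degenerate case where the two surviving modes $c_1, s_1$ are linearly dependent after applying $T$ — this can happen for instance if the matrix of coefficients $\big[\alpha_{i1}(0)\ \big|\ \beta_{i1}(0)\big]^T$ has rank one, so the limit is a degenerate (``collapsed'') polygon rather than a genuine planar regular polygon. I would address this exactly as Chow and Glickenstein do in \cite{chow2007semidiscrete}: the statement ``affine image of a regular polygon'' is understood to allow such degenerate affine images (a segment, traversed multiply, or a point), and generic initial data avoids it. Beyond that, the proof is a routine transcription of the planar argument, so I would keep it short, citing Theorem \ref{thm:plane} and the higher-codimension self-similar propositions for the structural ingredients and only spelling out the coordinate decoupling and the two limits above.
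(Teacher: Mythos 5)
Your proposal is correct and follows essentially the same route as the paper's proof: decompose $X$ into coordinate vectors, expand each in the real Fourier basis $\{c_k, s_k\}$, solve the decoupled ODEs for the coefficients, and then rescale by $e^{-\lambda_{m,1}t}$ (or the next dominant eigenvalue in the orthogonal case) to identify the limit as the image of $P_1=(c_1\ s_1)$ under the linear map $T$ of \eqref{eqn:linear_transform}. Your extra remark about the possibly degenerate (rank-one) image is a reasonable clarification but does not change the argument, which matches the paper's.
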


\begin{proof}
 We argue similarly as the $m=1$ case in \cite{chow2007semidiscrete}. Consider the polygon $X(t)$ expressed in terms of the coordinates of all vertices, that is $X = (\vec{x}_1(t)\  \cdots\  \vec{x}_p(t)).$
Since each $\vec{x}_i(t)$ is in $\mathbb{R}^n,$ we have for each $i=1,\ldots, p,$
\begin{equation*}
    \vec{x}_i(t) = \sum_{k=0}^{\lfloor \frac{n}{2} \rfloor}(\alpha_{ik}(t)c_k + \beta_{ik}(t)s_k)
\end{equation*}
for real coefficients $\alpha_{ik}(t)$ and $\beta_{ik}(t)$  for all $k=0,\ldots, n-1$ and all $t.$ 
Therefore,
\begin{equation*}
    X(t)  = (\vec{x}_1(t)\  \cdots\  \vec{x}_p(t))
             = \sum_{k=0}^{\lfloor \frac{n}{2} \rfloor}(c_k\ s_k)
            \begin{bmatrix} 
            \alpha_{1k}(t) & \cdots & \alpha_{pk}(t)\\
            \beta_{1k}(t) & \cdots & \beta_{pk}(t)\\
            \end{bmatrix}.
\end{equation*}
Considering the derivative and that the polygon is to satisfy \eqref{eqn:polyflow}, we find
\begin{equation*}
    \frac{dX}{dt}  = \frac{d}{dt}(\vec{x}_1(t)\  \cdots\  \vec{x}_p(t))
     =\sum_{k=0}^{\lfloor \frac{n}{2} \rfloor}(c_k\ s_k)
            \begin{bmatrix} 
            \frac{d\alpha_{1k}}{dt} & \cdots & \frac{d \alpha_{pk}}{dt}\\
            \frac{d\beta_{1k}}{dt} & \cdots & \frac{d\beta_{pk}}{dt}\\
            \end{bmatrix},
\end{equation*}
and 
\begin{multline*}
    \frac{dX}{dt}  = (-1)^{m+1}M^m(\vec{x}_1(t)\  \cdots\  \vec{x}_p(t))
         = \sum_{k=0}^{\lfloor \frac{n}{2} \rfloor}(-1)^{m+1}M^m(c_k\ s_k)
            \begin{bmatrix} 
            \alpha_{1k}(t) & \cdots & \alpha_{pk}(t)\\
            \beta_{1k}(t) & \cdots & \beta_{pk}(t)\\
            \end{bmatrix}\\
         = \sum_{k=0}^{\lfloor \frac{n}{2} \rfloor}\lambda_{m,k}(c_k\ s_k)
            \begin{bmatrix} 
            \alpha_{1k}(t) & \cdots & \alpha_{pk}(t)\\
            \beta_{1k}(t) & \cdots & \beta_{pk}(t)\\
            \end{bmatrix}.
\end{multline*}
Therefore $\frac{d\alpha_{ik}}{dt} = \lambda_{m,k}\alpha_{ik}(t)$ and similarly  $\frac{d\beta_{ik}}{dt} = \lambda_{m,k}\beta_{ik}(t),$ leading to the solution
\begin{equation}\label{eqn:high_codim_solution}
    X(t) = \sum_{k=0}^{\lfloor \frac{n}{2} \rfloor}(c_k\ s_k)
    \begin{bmatrix} 
    \alpha_{1k}(0)e^{\lambda_{m,k}t} & \cdots & \alpha_{pk}(0)e^{\lambda_{m,k}t}\\
    \beta_{1k}(0)e^{\lambda_{m,k}t} & \cdots & \beta_{pk}(0)e^{\lambda_{m,k}t}\\
    \end{bmatrix}.
\end{equation}
The coefficients are given by
\begin{align*}
    \alpha_{ik}(0) =\frac{\langle c_k, \vec{x}_i(0)\rangle}{ \langle c_k, c_k \rangle}  \text{ and } \beta_{ik}(0) =\frac{\langle s_k, \vec{x}_i(0)\rangle}{ \langle s_k, s_k \rangle}.
\end{align*}   

To show the long-time behaviour of the polygon under the flow, we follow a similar argument as given in Theorem \ref{thm:plane}, however with our expression for the polygon as given in \eqref{eqn:high_codim_solution}. Given $\lambda_{m,0} = 0$ and all other eigenvalues $\lambda_{m,k}$ are negative for $k = 1, 2, \ldots , \lfloor \frac{n}{2} \rfloor,$ it follows that
\begin{equation*}
    \lim_{t\to \infty}X(t)  = (c_0\ s_0) 
        \begin{bmatrix} 
        \alpha_{10}(0) & \cdots & \alpha_{p0}(0)\\
        \beta_{10}(0) & \cdots & \beta_{p0}(0)\\
         \end{bmatrix}
     = (\tilde{\alpha}_{10}(0)\ \cdots \ \tilde{\alpha}_{p0}(0))
\end{equation*}
and so every vertex of polygon $X(t)$ converges to the same point $ (\alpha_{10}(0), \ldots, \alpha_{p0}(0)) \in \mathbb{R}^p.$

To determine the behaviour of $X(t)$ as it shrinks, we consider a scaled up and shifted polygon, $Y(t),$ given by
\begin{equation*}
    Y(t) = e^{-\lambda_{m,1}t}\left(X(t) - \left(\tilde{\alpha}_{10}(0)\ \cdots \ \tilde{\alpha}_{p0}(0)\right)\right)
\end{equation*}
where $\lambda_{m,1} > \lambda_{m,k}$ for $k = 2, \ldots \lfloor \frac{n}{2} \rfloor.$  We therefore have
\begin{multline*}
    Y(t)  = e^{-\lambda_{m,1}t}\left\{\sum_{k=0}^{\lfloor \frac{n}{2} \rfloor}(c_k\ s_k)
    \begin{bmatrix} 
    \alpha_{1k}(0)e^{\lambda_{m,k}t} & \cdots & \alpha_{pk}(0)e^{\lambda_{m,k}t}\\
    \beta_{1k}(0)e^{\lambda_{m,k}t} & \cdots & \beta_{pk}(0)e^{\lambda_{m,k}t}\\
    \end{bmatrix} 
     - (\tilde{\alpha}_{10}(0)\ \cdots \ \tilde{\alpha}_{p0}(0))\right\}\\
     =  
     (c_1\ s_1)
    \begin{bmatrix} 
    \alpha_{11}(0) & \cdots & \alpha_{p1}(0)\\
    \beta_{11}(0) & \cdots & \beta_{p1}(0)\\
    \end{bmatrix} + \sum_{k=2}^{\lfloor \frac{n}{2} \rfloor}(c_k\ s_k)
    \begin{bmatrix} 
    \alpha_{1k}(0)e^{(\lambda_{m,k}-\lambda_{m,1})t} & \cdots & \alpha_{pk}(0)e^{(\lambda_{m,k}-\lambda_{m,1})t}\\
    \beta_{1k}(0)e^{(\lambda_{m,k}-\lambda_{m,1})t} & \cdots & \beta_{pk}(0)e^{(\lambda_{m,k}-\lambda_{m,1})t}\\
    \end{bmatrix}.\\
\end{multline*}
Therefore 
\begin{equation}\label{highdimlimitshape}
    \lim_{t\to\infty} Y(t)  =(c_1\ s_1)
    \begin{bmatrix} 
    \alpha_{11}(0) & \cdots & \alpha_{p1}(0)\\
    \beta_{11}(0) & \cdots & \beta_{p1}(0)\\
    \end{bmatrix}\\
\end{equation}
We have
\begin{equation*}
    P_1 =  (c_1\ s_1)
\end{equation*}
such that the $j$th vertex of $P_1$ is given as $(\cos(\frac{2\pi j}{n}), \sin(\frac{2\pi j}{n})) \in \mathbb{R}^2.$ Therefore the limit of $Y(t)$ is a polygon in $\mathbb{R}^p$ that has mapped each vertex of $P_1$ by the map $T\colon \mathbb{R}^2 \to \mathbb{R}^p$ given by
\begin{equation*}
T(x,y) = (x \ \ y)\begin{bmatrix} 
    \alpha_{11}(0) & \cdots & \alpha_{p1}(0)\\
    \beta_{11}(0) & \cdots & \beta_{p1}(0)\\
    \end{bmatrix}.
\end{equation*}

This map is linear transformation, and since $P_1$ is in the plane, the image of $P_1$ under $T$ is two dimensional and therefore planar.  

In the case of $\alpha_{i1}(0) =0$ and $\beta_{i1}(0) = 0$ for all $i = 1,2,\ldots p,$ we instead scale the polygon by $e^{-\lambda_{m,k}t}$ such that $\lambda_{m,k}$ is the next dominant eigenvalue where we have non-zero $\alpha_{ik}, \beta_{ik}$, and carry out the same process to show that the polygon converges asymptotically to the image of the linear transformation $T$ of $P_k.$ 
\end{proof}

\begin{remark}
We can examine the limiting shape of ancient solutions here by taking a rescaling and translation of the polygon, similar to the process in Theorem \ref{thm:plane} and \ref{thm:highcodim}, but by the least dominant eigenvalue $\lambda_{m,\lfloor\frac{n}{2}\rfloor},$ and observing behaviour as $t\to -\infty.$ Specifically in the planar case we consider
\begin{equation*}
     Y(t) = e^{\lambda_{m,\lfloor \frac{n}{2} \rfloor}t}(X - \tilde{a}_0(0))
\end{equation*}
such that 
\begin{equation*}
     \lim_{t \to -\infty} Y(t) = a_{\lfloor \frac{n}{2} \rfloor }(0)P_{\lfloor \frac{n}{2} \rfloor} + a_{n-\lfloor \frac{n}{2} \rfloor }(0)P_{n-\lfloor \frac{n}{2} \rfloor},
\end{equation*}
an affine transformation of $P_{\lfloor \frac{n}{2} \rfloor}.$ When $n$ is even, this is a straight line interval of $\frac{n}{2}$ overlapping polygon edges. If the original polygon is orthogonal to $P_{\lfloor \frac{n}{2} \rfloor},$ then the polygon will asymptotically converge to an affine transformation of the regular polygon corresponding to the next least dominant eigenvalue. 
A similar result holds for solutions in higher codimension using the scaled and translated polygon 
\begin{equation*}
    Y(t) = e^{\lambda_{m,\lfloor \frac{n}{2} \rfloor}t}\left(X(t) - \left(\tilde{\alpha}_{10}(0)\ \cdots \ \tilde{\alpha}_{p0}(0)\right)\right)
\end{equation*}
where $\lambda_{m,\lfloor \frac{n}{2} \rfloor} < \lambda_{m,k}$ for $k = 1, 2, \ldots ,\lfloor \frac{n}{2} \rfloor - 1.$ 
Taking $t\to -\infty$ gives a polygon in $\mathbb{R}^p$ that is a linear image of regular polygon $P_{\lfloor\frac{n}{2}\rfloor}$ in $\mathbb{R}^2.$
\end{remark}

\section{Semi-discrete geometric flow between polygons} \label{S:Yau}

We adapt the semi-discrete polyharmonic flow to form an analogue of Yau's curvature difference flow in the semi-discrete setting which we denote as the \textit{semi-discrete Yau difference flow (SYDF)}. Consider $X(t),$ a family of closed polygons in $\mathbb{R}^p$ with $n$ vertices and $Y$ a fixed closed polygon also in $\mathbb{R}^p$ with $n$ vertices.  We set up a \textit{difference polygon}, $Z(t) = X(t) - Y.$
Since $\frac{dZ}{dt} = \frac{dX}{dt}$ and 
\begin{equation*}
    (-1)^{m+1}M^mZ(t) = (-1)^{m+1}M^m(X(t) - Y),
\end{equation*}
applying the semi-discrete polyharmonic flow \eqref{eqn:polyflow} to $Z(t)$ leads to our expression for the semi-discrete Yau difference flow, 
\begin{equation}\label{eqn:Yau_diff_eqn}\tag{$\mathrm{SYDF_m}$}
    \frac{dX}{dt} = (-1)^{m+1}M^m(X(t) - Y).
\end{equation} 
Observe that $X=Y$ is a stationary solution of this flow.

In investigating this difference flow, the diagonalisation of the matrix $(-1)^{m+1}M^m$ is used.  This is given by
\begin{equation}\label{eqn:diagnoalised}
    (-1)^{m+1}M^m = \frac{1}{n}F\text{diag}\left(\lambda_{m,k}\right)\bar{F},
\end{equation}

where $F$ is the Fourier matrix,
\begin{equation}\label{eqn:Fouriermatrix}
    F = \left[P_0 \bigg| P_1 \bigg| \cdots \bigg| P_{n-1}\right] = 
    \begin{bmatrix}
1 & 1 & 1 & \cdots  & 1\\
1 & \omega^1 & \omega^2  & \cdots & \omega^{n-1} \\
1 & \omega^2 & \omega^4 & \cdots  & \omega^{2(n-1)}\\
\vdots & \vdots & \vdots & \ddots & \vdots\\
1 & \omega^{n-1} & \omega^{2(n-1)} & \cdots & \omega^{(n-1)(n-1)}\\
\end{bmatrix},
\end{equation}
and $\text{diag}(\lambda_{m,k})$ is the diagonal matrix with diagonal entries given by the eigenvalues $\lambda_{m,0}, \lambda_{m,1},\ldots, \lambda_{m,n-1}.$ 

The matrix $\bar{F}$ in \eqref{eqn:diagnoalised} is the complex conjugate of the matrix $F$, where $F^{-1} = \frac{1}{n}\bar{F}.$

\begin{theorem}
Let $X^0$ and $Y$ be given $n$-sided polygons.  The flow \eqref{eqn:Yau_diff_eqn}, with $X\left( 0\right) = X^0$,  has a unique solution $X\left( t\right)$ given by

\begin{equation}\label{eqn:Yau_form}
    X(t) = \frac{1}{n}F\textnormal{diag}\left(e^{\lambda_{m,k}t}\right)\bar{F}\left(X^0 - Y\right) + Y,
\end{equation}
where $F$ is given by \eqref{eqn:Fouriermatrix}.  In particular, the solution exists for all time and as $t\rightarrow \infty$ converges exponentially to a translate of $Y$.
\end{theorem}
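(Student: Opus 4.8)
The plan is to remove the target polygon by a translation, reducing \eqref{eqn:Yau_diff_eqn} to the homogeneous flow \eqref{eqn:polyflow}, and then to transcribe Theorem \ref{thm:highcodim} (Theorem \ref{thm:plane} when $p=2$) into the matrix language of the diagonalisation \eqref{eqn:diagnoalised}. Concretely, set $Z(t) = X(t) - Y$; since $Y$ is constant, $\frac{dZ}{dt} = \frac{dX}{dt}$ and \eqref{eqn:Yau_diff_eqn} becomes precisely $\frac{dZ}{dt} = (-1)^{m+1}M^m Z$, so $Z$ solves \eqref{eqn:polyflow} with $Z(0) = X^0 - Y$. Equivalently, \eqref{eqn:Yau_diff_eqn} is a constant-coefficient linear system with stationary solution $X\equiv Y$, so its general solution is $Y$ plus the general solution of \eqref{eqn:polyflow}. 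By Theorem \ref{thm:highcodim}, $Z$ exists and is unique for all $t$, which gives existence and uniqueness for \eqref{eqn:Yau_diff_eqn} at once: if two solutions share initial data, their difference solves \eqref{eqn:polyflow} from $0$ and hence vanishes.

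For the representation formula I would pass from the eigenvector expansion to matrix form. Writing $Z(0) = \sum_{k=0}^{n-1} a_k(0)P_k$, Theorem \ref{thm:highcodim} gives $Z(t) = \sum_{k=0}^{n-1} a_k(0)e^{\lambda_{m,k}t}P_k$, which is exactly $Z(t) = \exp\!\big((-1)^{m+1}M^m t\big)Z(0)$. Using \eqref{eqn:diagnoalised} together with $\tfrac1n F\bar F = I_n$, the matrix exponential is $\exp\!\big((-1)^{m+1}M^m t\big) = \tfrac1n F\,\mathrm{diag}(e^{\lambda_{m,k}t})\,\bar F$, whence $Z(t) = \tfrac1n F\,\mathrm{diag}(e^{\lambda_{m,k}t})\,\bar F\,(X^0 - Y)$, and adding $Y$ yields \eqref{eqn:Yau_form}. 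Alternatively one verifies \eqref{eqn:Yau_form} directly by differentiating and invoking \eqref{eqn:diagnoalised}, checking the initial condition from $\tfrac1n F\bar F = I_n$.

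For the asymptotics, recall from \eqref{eqn:eigenvalues} and \eqref{E:evalue} that $\lambda_{m,0}=0$ while $\lambda_{m,k}<0$ for $k=1,\dots,n-1$, with $\lambda_{m,1} = -4^m\sin^{2m}(\pi/n)$ the largest of the negative eigenvalues. Hence $\mathrm{diag}(e^{\lambda_{m,k}t}) \to \mathrm{diag}(1,0,\dots,0)$ as $t\to\infty$, and $\tfrac1n F\,\mathrm{diag}(1,0,\dots,0)\,\bar F = \tfrac1n P_0 P_0^{T}$ is the all-$\tfrac1n$ matrix, i.e. the projection sending a polygon to the constant polygon located at the centroid of its vertices. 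Therefore $X(t) \to X_\infty := Y + \tfrac1n P_0 P_0^{T}(X^0 - Y)$, which is $Y$ translated by the vector joining the centroid of $Y$ to that of $X^0$; in particular the limit is a genuine translate of $Y$. The error satisfies $X(t) - X_\infty = \sum_{k=1}^{n-1} a_k(0)e^{\lambda_{m,k}t}P_k$ with $|e^{\lambda_{m,k}t}|\le e^{\lambda_{m,1}t}$ for $k\ge 1$, so $\|X(t)-X_\infty\| \le C e^{-4^m\sin^{2m}(\pi/n)\,t}$, establishing exponential convergence (faster for larger $m$).

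The proof is essentially bookkeeping once Theorem \ref{thm:highcodim} and \eqref{eqn:diagnoalised} are in hand, so there is no serious obstacle. The one point I would take a little care over is the last step: one must recognise that the spectral projection surviving as $t\to\infty$ is exactly the projection onto the one-dimensional space of constant polygons spanned by $P_0$, so that the limiting polygon is literally a translate of $Y$ rather than merely an affine image of it, and that the sub-dominant eigenvalue $\lambda_{m,1}$ controls the exponential rate.
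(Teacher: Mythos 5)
Your proposal is correct, and it reaches \eqref{eqn:Yau_form} by a genuinely different route from the paper. The paper treats \eqref{eqn:Yau_diff_eqn} as an inhomogeneous linear system: it writes down the complementary solution \eqref{eqn:homogenous_solution} from the diagonalisation \eqref{eqn:diagnoalised} and then computes the particular solution by variation of constants, evaluating the Duhamel integral $e^{(-1)^{m+1}M^m t}\int_0^t e^{-(-1)^{m+1}M^m s}(-1)^m M^m Y\,ds$ explicitly to obtain $Y - \frac1n F\,\mathrm{diag}(e^{\lambda_{m,k}t})\bar F\,Y$, before adding the two pieces. You instead exploit the substitution $Z = X - Y$ (which the paper introduces as motivation for the flow but does not use in its proof), reducing everything to the homogeneous flow \eqref{eqn:polyflow} and the single identity $\exp\bigl((-1)^{m+1}M^m t\bigr) = \frac1n F\,\mathrm{diag}(e^{\lambda_{m,k}t})\bar F$; this avoids the integral computation entirely and makes uniqueness immediate. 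The asymptotics are handled the same way in both arguments: $\lambda_{m,0}=0$, $\lambda_{m,k}<0$ otherwise, and the surviving spectral projection $\frac1n F\,\mathrm{diag}(1,0,\ldots,0)\bar F = \frac1n P_0 P_0^T$ sends the difference polygon to its centroid, so the limit is $Y$ translated by the centroid of $X^0 - Y$, with rate governed by $\lambda_{m,1} = -4^m\sin^{2m}(\pi/n)$ exactly as you state. One cosmetic caution: your expansion $Z(0)=\sum_k a_k(0)P_k$ with complex scalar coefficients is the planar ($p=2$, $\mathbb{C}^n$) picture of Theorem \ref{thm:plane}; for general $p$ you should either apply the matrix exponential column-by-column to $Z(0)\in\mathbb{R}^{n\times p}$ (which your direct verification via \eqref{eqn:diagnoalised} already covers) or use the real basis $\{c_k, s_k\}$ as in Theorem \ref{thm:highcodim} — but this does not affect the validity of the argument.
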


\begin{proof}
Since $X(t)$ and $Y$ are closed polygons in $\mathbb{R}^p$ they can be expressed as
\begin{equation*}
    X(t)  = (\vec{x}_1(t)\  \cdots\  \vec{x}_p(t))
             = \sum_{k=0}^{\lfloor \frac{n}{2} \rfloor}(c_k\ s_k)
            \begin{bmatrix} 
            \alpha_{1k}(t) & \cdots & \alpha_{pk}(t)\\
            \beta_{1k}(t) & \cdots & \beta_{pk}(t)\\
            \end{bmatrix}
\end{equation*}
and 
\begin{equation*}
    Y  = (\vec{y}_1\  \cdots\  \vec{y}_p)
             = \sum_{k=0}^{\lfloor \frac{n}{2} \rfloor}(c_k\ s_k)
            \begin{bmatrix} 
            \gamma_{1k} & \cdots & \gamma_{pk}\\
            \delta_{1k} & \cdots & \delta_{pk}\\
            \end{bmatrix},
\end{equation*}

where $c_k$ and $s_k$ are as given in \eqref{eqn:ck} and \eqref{eqn:sk}, and $\alpha_{ik}(t), \beta_{ik}(t), \gamma_{ik}, \delta_{ik}$ are real coefficients for all $i=1,\ldots, p$ and $k = 0,1,\ldots,n-1.$  The vectors $\vec{x}_i(t)$ and $\vec{y}_i$ in $\mathbb{R}^n$ represent the $i$th coordinates of all vertices in $X(t)$ and $Y$, respectively. 

The equation given in \eqref{eqn:Yau_diff_eqn} is a inhomogeneous system of ordinary differential equations.
Using the diagonalisation of the coefficient matrix, given in \eqref{eqn:diagnoalised}, the complementary solution to \eqref{eqn:Yau_diff_eqn}, that is, the solution to the associated homogeneous equation is 
\begin{equation}\label{eqn:homogenous_solution}
    X_c(t) = \frac{1} {n}F\text{diag}\left(e^{\lambda_{m,k}t}\right)\bar{F}X^0.
\end{equation}

For the particular solution of \eqref{eqn:Yau_diff_eqn}, we compute
\begin{multline*}
    X_p(t) 
     = e^{(-1)^{m+1}M^m t}\int^t_0 e^{-(-1)^{m+1}M^m s}(-1)^{m}M^mY ds\\ 
     = \frac{1} {n}F\text{diag}\left(e^{\lambda_{m,k}t}\right)\bar{F}\frac{1} {n}F\left[\int^t_0 \text{diag}\left(e^{-\lambda_{m,k}s}\right)ds \right]\bar{F}(-1)^{m}M^mY
     = Y -  \frac{1}{n}F\text{diag}\left(e^{\lambda_{m,k}t}\right)\bar{F}Y.
\end{multline*}
Therefore, 
\begin{align*}
    X(t) 
    & = \frac{1} {n}F\text{diag}\left(e^{\lambda_{m,k}t}\right)\bar{F}X^0 + Y -  \frac{1}{n}F\text{diag}\left(e^{\lambda_{m,k}t}\right)\bar{F}Y,\\
\end{align*}
which leads to \eqref{eqn:Yau_form} with a rearrangement.  We have $X(0) = \frac{1}{n}F\bar{F}\left(X^0 - Y\right) + Y = X^0.$

To show that the polygon converges to a translation of the fixed polygon $Y$, we note that $c_0 = (1,1,\ldots, 1)^T$ and
\begin{equation*}
    \alpha_{i0}(0) =\frac{\langle c_0, \vec{x}_i(0)\rangle}{ \langle c_0, c_0 \rangle} = \frac{1}{n}\langle c_0, \vec{x}_i(0)\rangle
    \text{ and } \gamma_{i0} =\frac{\langle c_0, \vec{y}_i\rangle}{ \langle c_0, c_0 \rangle} = \frac{1}{n}\langle c_0, \vec{y}_i\rangle.
\end{equation*}     
Therefore, since $\lambda_{m,0}=0$ and $\lambda_{m,k}<0$ for $k=1,2,\ldots,n-1,$
\begin{multline*}
    \lim_{t\to \infty}X(t) 
     = \lim_{t\to \infty}\frac{1}{n}F\text{diag}\left(e^{\lambda_{m,k}t}\right)\bar{F}\left(X^0 - Y\right) + Y
     = \frac{1}{n}F\text{diag}(1,0,\ldots,0)\bar{F}\left(X^0-Y\right) + Y\\
     = \frac{1}{n}(c_0\  c_0 \cdots c_0)\left((\vec{x}_1(0)\ \cdots\ \vec{x}_p(0)) - (\vec{y}_1\  \cdots\  \vec{y}_p)\right)+ Y
     = \left(\tilde{\alpha}_{10}(0)\ \cdots\ \tilde{\alpha}_{p0}(0) \right) - \left(\tilde{\gamma}_{10}\ \cdots\ \tilde{\gamma}_{p0} \right) + Y,
\end{multline*}
which is the fixed target polygon $Y$ translated to the centre of mass of the difference polygon $X^0 - Y.$
\end{proof}

Figures \ref{fig:Yau_cases} and \ref{fig:Yau_vertex_cases} depict some examples of the semi-discrete Yau difference flow. The initial and target polygons can have differing numbers of vertices.  The polygon with the lower number of vertices simply has enough vertices duplicated (Figures \ref{fig:Yau_multi_m=1} and \ref{fig:Yau_multi_m=3}), or vertices added along existing connecting lines (Figures \ref{fig:Yau_4_m=2} and \ref{fig:Yau_mid_m=3}), to provide the same number of points in the initial data and target polygon.  Either or both initial and target polygon could be a (multiply-covered) point or line segment.\\

\begin{figure*}

  \begin{subfigure}[b]{0.31\textwidth}
    \includegraphics[width=\textwidth]{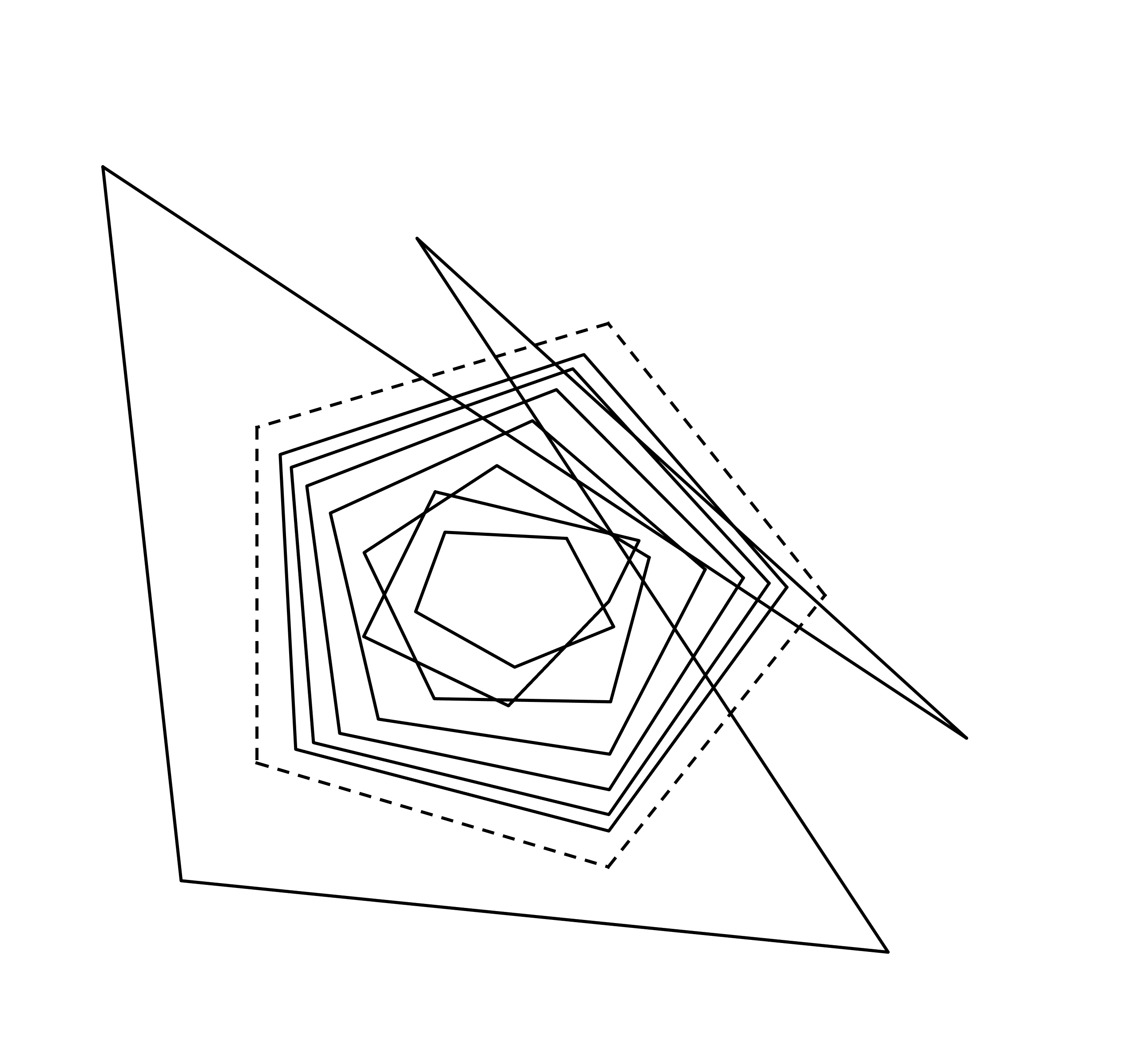}
    \caption{Pentagon flowing to regular convex pentagon by \eqref{eqn:Yau_diff_eqn} for $m=2.$}
    \label{fig:Yau_1_m=2}
  \end{subfigure}
  \hfill
  \begin{subfigure}[b]{0.31\textwidth}
    \includegraphics[width=\textwidth]{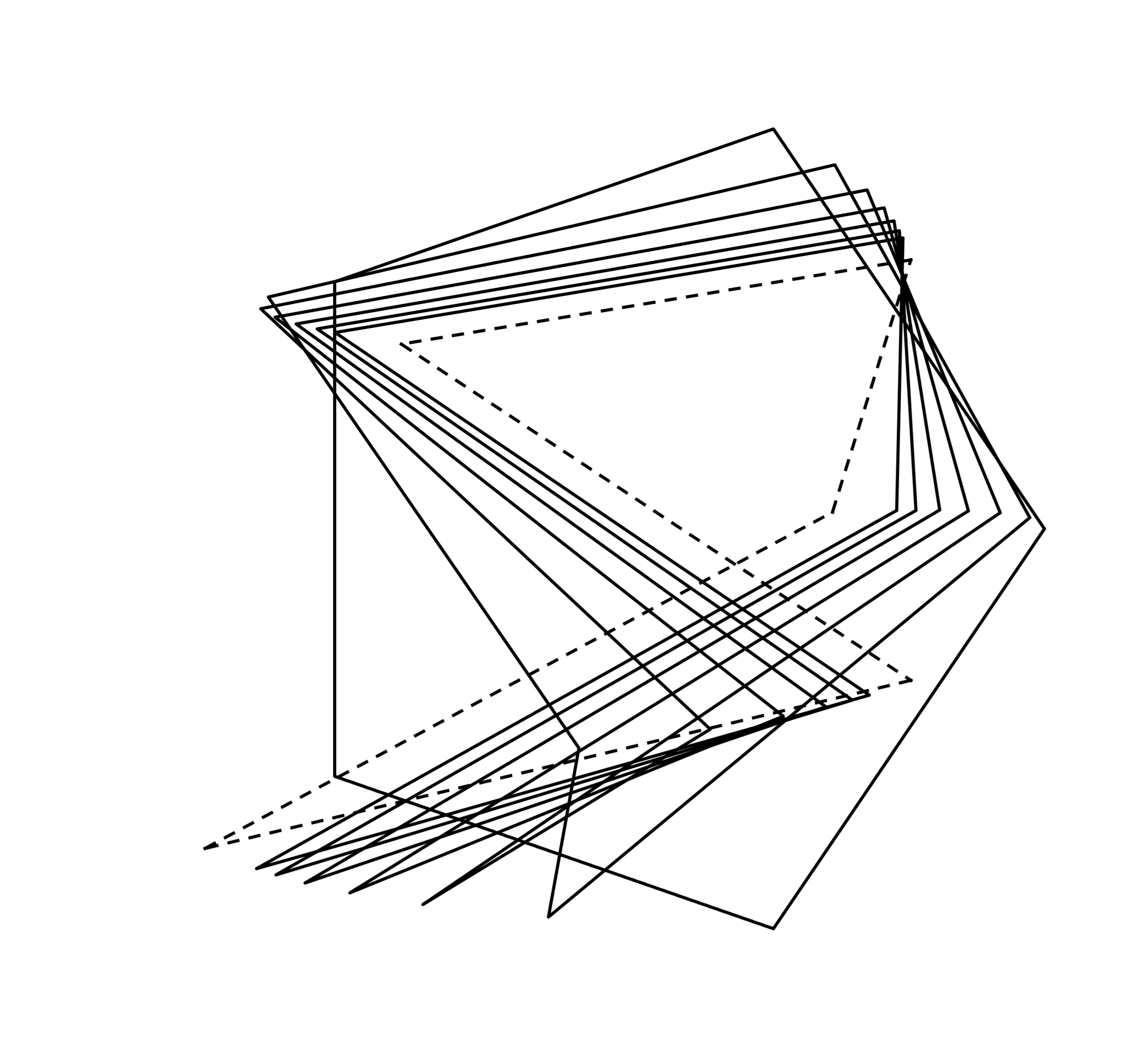}
    \caption{Regular convex pentagon flowing to an irregular pentagon by \eqref{eqn:Yau_diff_eqn} for $m=1.$}
    \label{fig:Yau_2_m=1}
  \end{subfigure}
  \hfill
   \begin{subfigure}[b]{0.31\textwidth}
    \includegraphics[width=\textwidth]{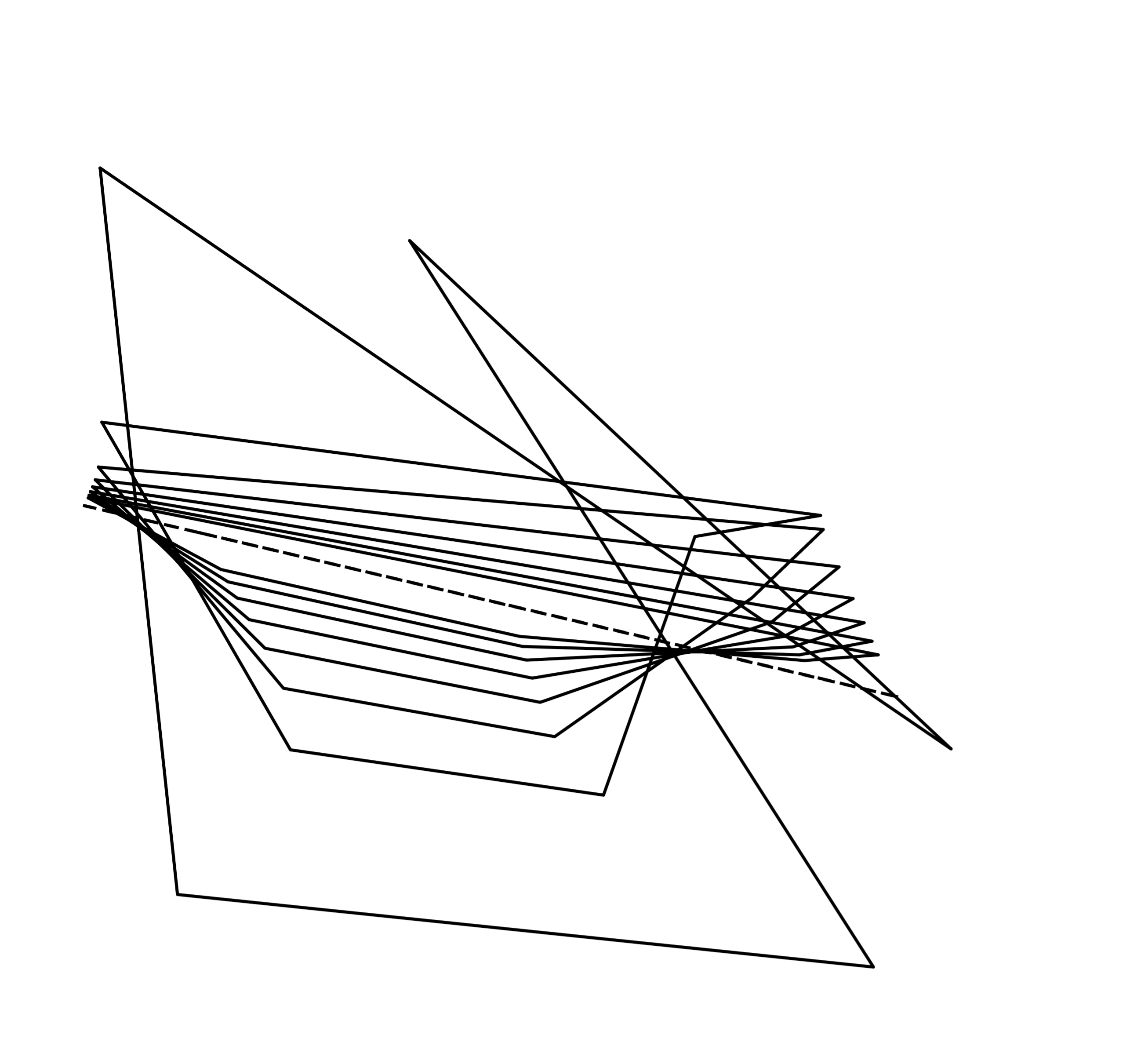}
    \caption{Irregular pentagon flowing by \eqref{eqn:Yau_diff_eqn} for $m=2$ to a line segment.}
    \label{fig:Yau_4_m=2}
  \end{subfigure}
 
  \caption{Different cases of pentagons flowing to other pentagons under the semi-discrete Yau difference flow. In each case, selected time steps of the evolution are shown superimposed over the initial and target polygons. The target polygon is given by the dashed line.} \label{fig:Yau_cases}
\end{figure*}

\begin{figure*}

  \begin{subfigure}[b]{0.31\textwidth}
    \includegraphics[width=\textwidth]{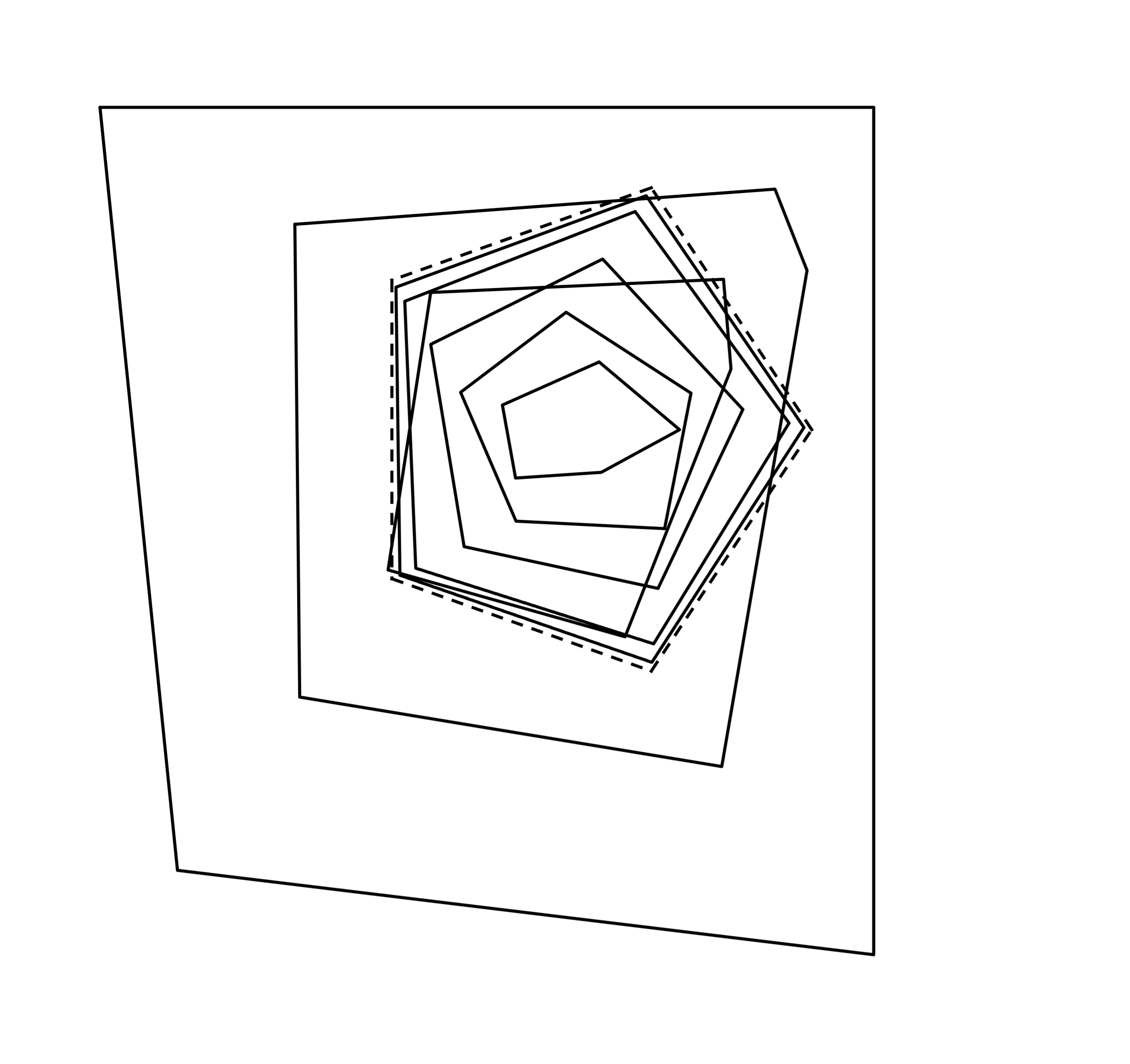}
    \caption{Initial polygon contains duplicated consecutive vertices to represent a quadrilateral flowing to a regular pentagon target by \eqref{eqn:Yau_diff_eqn} for $m=1.$ }
    \label{fig:Yau_multi_m=1}
  \end{subfigure}
  \hfill
  \begin{subfigure}[b]{0.31\textwidth}
    \includegraphics[width=\textwidth]{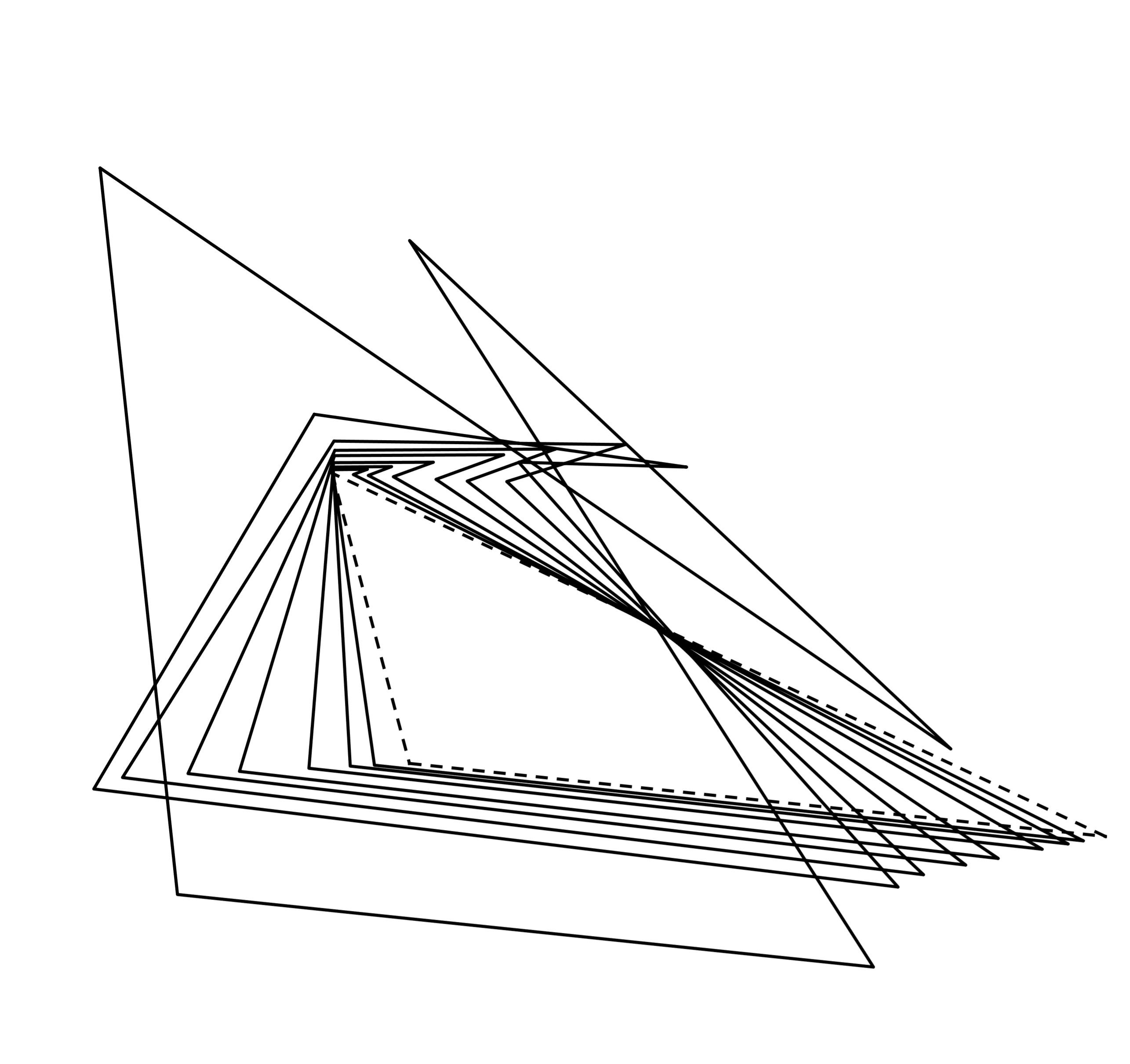}
    \caption{An initial pentagon flows by \eqref{eqn:Yau_diff_eqn} for $m=3$ to a target polygon that contains three duplicated consecutive vertices to represent a triangle.}
    \label{fig:Yau_multi_m=3}
  \end{subfigure}
  \hfill
   \begin{subfigure}[b]{0.31\textwidth}
    \includegraphics[width=\textwidth]{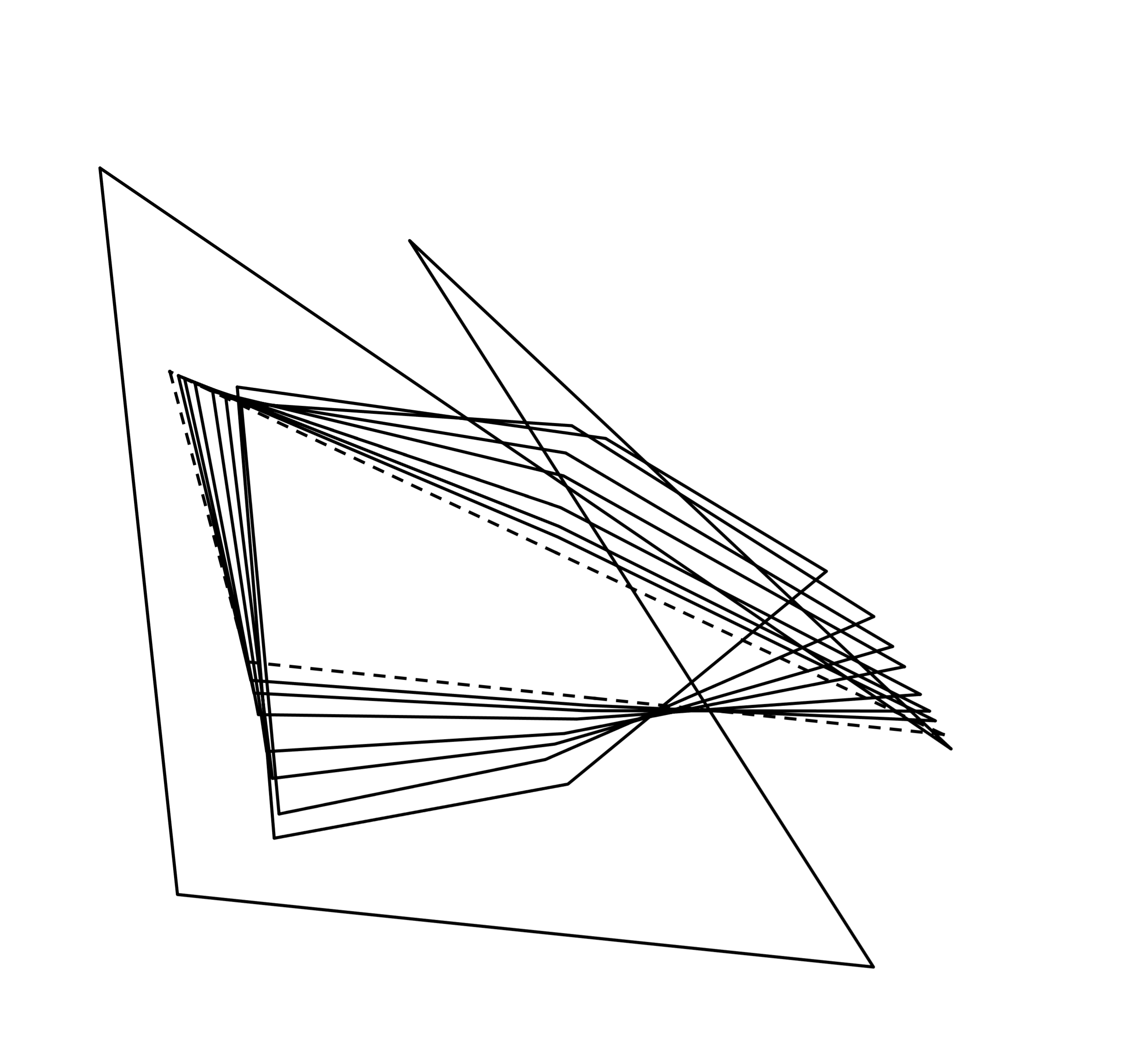}
    \caption{An initial pentagon flows by \eqref{eqn:Yau_diff_eqn} for $m=3$ to a target polygon that contains vertices at midpoints of line segments to represent a triangle.}
    \label{fig:Yau_mid_m=3}
  \end{subfigure}
 
  \caption{Examples of flowing from an initial polygon to a target polygon with a different number of vertices by the semi-discrete linear Yau difference flow. Selected time steps of evolution are shown superimposed over the initial and target polygons. The target polygon is given by the dashed line.} \label{fig:Yau_vertex_cases}
\end{figure*}

\begin{remark}
\begin{enumerate}
    \item Parabolic curvature flows for evolving one smooth curve to another were described in \cite{lin2009evolving, MSW23} and the references therein.  
    \item The above case for $m=1$ corresponds to the inhomogeneous version of Chow and Glickenstein's flow in \cite{chow2007semidiscrete}.
    \item The expression \eqref{eqn:homogenous_solution} provides an alternative, equivalent expression for the solution of the homogeneous semi-discrete polyharmonic flow \eqref{eqn:polyflow}.
    \item Instead of flowing to a fixed polygon $Y$ we could flow to $Y(t)$.  The difference above would be in the calculation of $X_p(t)$.  For example, if $Y(t) = \mu(t) Y_0$ for some scaling factor $\mu(t)$, this factor flows through to the integrand in the formula for $X_p(t)$, and an explicit solution can be written down for various functions $\mu(t)$.  Flowing to a scaling of $Y_0$ is reminiscent of Type I behaviour in various smooth curvature flows, where under appropriate rescaling singularities are modelled by self-similar scaling solutions.
    \item The evolution equation \eqref{eqn:Yau_diff_eqn} can flow \emph{any} initial polygon $X^0$ with $n$ sides to \emph{any} target polygon $Y$ with $n$ sides.  In contrast, flowing one smooth curve to another typically requires conditions on the initial curve at least (e.g. embeddedness, convexity).  Of course, as in the smooth case (e.g. in \cite{lin2009evolving}), there are also other ways of deforming one polygon to another, that do not involve a curvature flow.  In our setting, we could simply take, for example, $X\colon\left[ 0, 1\right] \rightarrow \mathbb{R}^p$ given by
$$X\left(t\right) = t Y + \left( 1-t\right) X^0 \mbox{.}$$
  \item As with our earlier flows, one may consider ancient solutions to \eqref{eqn:Yau_diff_eqn}.  Taking $t\rightarrow -\infty$, we see that under appropriate rescaling, solutions approach an affine transformation of the regular polygon with the least dominant eigenvalue. Again a difference with the smooth case is that any solution may be extended back in time.

\end{enumerate}
   \end{remark}

\end{document}